\newcolumntype{C}[1]{>{\centering\let\newline\\\arraybackslash\hspace{0pt}}m{#1}}
\newcommand{\R}{\mathbb{R}}
\DeclareMathOperator*{\argmin}{arg\,min}
\newcommand{\symnum}[3][2]{\stackrel{\mathclap{\raisebox{.5pt}{\footnotesize \textcircled{\raisebox{-.9pt} {#2}}}}}{#3}}
\newcommand{\numcirc}[1]{$\raisebox{.5pt}{\footnotesize \textcircled{\raisebox{-.9pt} {#1}}}$}
\newtheoremstyle{dotless}{}{}{\itshape}{}{\bfseries}{}{ }{}
\theoremstyle{dotless}
\newtheorem*{proposition*}{Proposition}
\newcommand\defeq{\stackrel{\mathclap{\tiny \mbox{def}}}{=}}
\title{Accelerating Variance-Reduced Stochastic Gradient Methods}
\author[1]{Derek Driggs\thanks{\texttt{d.driggs@damtp.cam.ac.uk}}}
\author[2]{Matthias J. Ehrhardt\thanks{\texttt{m.ehrhardt@bath.ac.uk}}}
\author[1]{Carola-Bibiane Sch\"onlieb\thanks{\texttt{cbs31@cam.ac.uk}}}
\affil[1]{Department of Applied Mathematics and Theoretical Physics, Cambridge University}
\affil[2]{Institute for Mathematical Innovation, University of Bath}
\newcommand{\prox}{\textnormal{prox}}
\newcommand{\E}{\mathbb{E}_k}
\newtheorem{theorem}{Theorem}
\newtheorem{lemma}[theorem]{Lemma}
\newtheorem{corollary}[theorem]{Corollary}
\newtheorem{proposition}[theorem]{Proposition}
\newtheorem{remark}{Remark}
\newtheorem{definition}{Definition}
\begin{document}

\maketitle

\begin{abstract}
Variance reduction is a crucial tool for improving the slow convergence of stochastic gradient descent. Only a few variance-reduced methods, however, have yet been shown to directly benefit from Nesterov's acceleration techniques to match the convergence rates of accelerated gradient methods. Such approaches rely on ``negative momentum'', a technique for further variance reduction that is generally specific to the SVRG gradient estimator. In this work, we show that negative momentum is unnecessary for acceleration and develop a universal acceleration framework that allows all popular variance-reduced methods to achieve accelerated convergence rates. The constants appearing in these rates, including their dependence on the number of functions $n$, scale with the mean-squared-error and bias of the gradient estimator. In a series of numerical experiments, we demonstrate that versions of SAGA, SVRG, SARAH, and SARGE using our framework significantly outperform non-accelerated versions and compare favourably with algorithms using negative momentum.
\end{abstract}

% \keywords{Stochastic Optimisation, Convex Optimisation, Variance Reduction, Accelerated Gradient Descent}

% \vspace{2mm}

% \noindent \textbf{MSC} 90C06, 90C15, 90C25, 90C30, 90C60, 68Q25

\section{Introduction}

We are interested in solving the following composite convex minimisation problem:
\begin{equation}
\label{eq:optmain}
    \min_{x \in \R^m} \left\{ F(x) \defeq f(x) + g(x) \defeq \frac{1}{n} \sum_{i=1}^n f_i(x) + g(x) \right\}.
\end{equation}
Throughout, we assume $f_i : \R^m \to \R$ are convex and have $L$-Lipschitz continuous gradients for all $i$. We also assume $g : \R^m \to \R \cup \{\infty\}$ is proper, lower semicontinuous, and $\mu$-strongly convex with $\mu \ge 0$, but we do not require $g$ to be differentiable. Problems of this form are ubiquitous in many fields, including machine learning, compressed sensing, and image processing (see, e.g., \cite{RPCA,matrixcompletion,DonohoMRI,LASSO}). Fundamental examples include LASSO \cite{LASSO} and matrix completion \cite{matrixcompletion}, where $f$ is a least-squares loss and $g$ is the $\ell_1$ or nuclear norm, respectively, and sparse logistic regression, where $f$ is the logistic loss and $g$ is the $\ell_1$ norm.

One well-studied algorithm that solves \eqref{eq:optmain} is the \textit{forward-backward splitting algorithm} \cite{fbs,monotoneOps}. This method has a worst-case convergence rate of $\mathcal{O}\left( 1/T \right)$ when $F$ is not strongly convex, and when $F$ is $\mu$-strongly convex, it converges linearly with a rate of $\mathcal{O}\left( (1 + \kappa^{-1})^{-T} \right)$, where $\kappa \defeq L / \mu$ is the condition number of $F$. The \textit{inertial} forward-backward splitting algorithm \cite{fista} converges at an even faster rate of $\mathcal{O}\left( 1/T^2 \right)$ without strong convexity and a linear rate of $\mathcal{O}\left((1+\kappa^{-1/2})^{-T}\right)$ when $F$ is strongly convex. The inertial forward-backward method is able to achieve these optimal convergence rates because it incorporates \textit{momentum}, using information from previous iterates to adjust the current iterate.

Although the inertial forward-backward algorithm converges quickly, it requires access to the full gradient $\nabla f$ at each iteration, which can be costly, for instance, when $n$ is large. In many applications, common problem sizes are so large that computing $\nabla f$ is prohibitively expensive. Stochastic gradient methods exploit the separable structure of $f$, using the gradient of a few of the components $\nabla f_i$ to estimate the full gradient at the current iterate. In most cases, the complexity of computing $\nabla f_i$ for one $i$ is $1 / n$-times the complexity of computing the full gradient, so stochastic gradient methods generally have a much smaller per-iteration complexity than full-gradient methods. Moreover, it has recently been shown that the optimal convergence rates of stochastic gradient methods are $\mathcal{O}\left( \sqrt{n}/T^2 \right)$ without strong convexity and $\mathcal{O}\left( \theta_S^{-T} \right)$ with $\theta_S \defeq 1 + \sqrt{\frac{\mu}{L n}}$ when $g$ is $\mu$-strongly convex, matching the optimal dependence on $T$ and $\kappa$ of full-gradient methods \cite{srebrocomplexity}.\footnote{The results in \cite{srebrocomplexity} are complexity bounds, bounding the number of gradient and prox oracle calls required to achieve a given tolerance. For algorithms performing $\mathcal{O}(1)$ oracle calls per iteration, these complexity bounds imply the stated bounds on convergence rates.} Stochastic gradient methods have undergone several revolutions to improve their convergence rates before achieving this lower bound. We summarise these revolutions below, beginning with traditional stochastic gradient descent.

\paragraph{Stochastic Gradient Descent (SGD).} \textit{Stochastic gradient descent}, dating back to \cite{sgd}, uses the gradients $\nabla f_j, \ \forall j \in J_k \subset \{1,2,\ldots,n\}$ to estimate the full gradient. The \emph{mini-batch} $J_k$ is an index set chosen uniformly at random from all subsets of $\{ 1,2,\ldots,n \}$ with cardinality $b \defeq |J_k|$. When $b \ll n$, the per-iteration complexity of stochastic gradient descent is much less than full-gradient methods. However, the per-iteration savings come at the cost of a slower convergence rate, as SGD converges at a rate of $\mathcal{O} (1/\sqrt{T})$ in the worst case. Still, SGD outperforms full-gradient methods on many problems, especially if a low-accuracy solution is acceptable.

\paragraph{Variance Reduction.} Variance-reduced estimators use gradient information from previous iterates to construct a better estimate of the gradient at the current step, ensuring that the mean-squared error of these estimates decreases as the iterations increase. Variance-reduction improves the convergence rates of stochastic gradient methods, but either have a higher per-iteration complexity or have larger storage requirements than SGD. The two most popular variance-reduced algorithms are SVRG \cite{svrg} and SAGA \cite{SAGA}, which use the following estimators to approximate $\nabla f(x_{k+1})$:
\begin{align}
\label{eq:svrg}
\widetilde{\nabla}^{\textnormal{\tiny SVRG}}_{k+1} &\defeq \frac{1}{b} \left( \sum_{j \in J_k} \nabla f_j(x_{k+1}) - \nabla f_j(\widetilde{x}) \right) + \nabla f(\widetilde{x}) \\
\label{eq:saga}
\widetilde{\nabla}^{\textnormal{\tiny SAGA}}_{k+1} &\defeq \frac{1}{b} \left( \sum_{j \in J_k} \nabla f_j(x_{k+1}) - \nabla f_j(\varphi_k^j) \right) + \frac{1}{n} \sum_{i=1}^n \nabla f_i(\varphi_k^i).
\end{align}
In SVRG, the full gradient $\nabla f(\widetilde{x})$ is computed every $m \approx 2n$ iterations, and $\nabla f(\widetilde{x})$ is stored and used for future gradient estimators. SAGA takes a similar approach, storing $n$ past stochastic gradients, and updating the stored gradients so that $\nabla f_j(\varphi_{k+1}^j) = \nabla f_j(x_{k+1})$. In this work, we consider a variant of SVRG where the full gradient is computed at every iteration with probability $1/p \in (0,1]$ rather than deterministically computing the full gradient every $2n$ iterations.

SVRG, SAGA, and related variance-reduced methods converge at a rate of $\mathcal{O} \left( n/T \right)$ when no strong convexity is present. With strong convexity, these algorithms enjoy linear convergence, with a rate of $\mathcal{O} ( \left(1 + (n + \kappa)^{-1} \right)^{-T} )$. Although these convergence rates are significantly faster than the rate of SGD, they do not match the asymptotic convergence rates of accelerated first-order methods, converging like $\mathcal{O}\left( n/T^2 \right)$ without strong convexity and $\mathcal{O}( (1+(n \kappa)^{-1/2})^{-T} )$ with strong convexity.

\paragraph{Variance Reduction with Bias.} 
SAGA and SVRG are \textit{unbiased} gradient estimators because they satisfy $\E \widetilde{\nabla}_{k+1} = \nabla f(x_{k+1})$, where $\E$ is the expectation conditioned on the first $k$ iterates. There are several popular variance-reduced algorithms that use biased gradient estimators \cite{SAG,sarah}. In \cite{techrepo}, the authors develop a framework for proving convergence guarantees for biased methods, suggesting that the convergence rates of biased stochastic gradient estimators depend on the sum of two terms:
\begin{equation}
    \gamma^2 \E \|\widetilde{\nabla}_{k+1} - \nabla f(x_{k+1}) \|^2 + \gamma \left\langle \nabla f(x_{k+1}) - \E \widetilde{\nabla}_{k+1}, x_{k+1} - x^* \right\rangle.
\end{equation}
These terms are the mean-squared error (MSE) of the gradient estimator and the ``bias term'', respectively. The authors also show that \textit{recursive} gradient estimators such as SARAH \cite{sarah} and SARGE \cite{techrepo} minimise these terms better than other biased or unbiased estimators, leading to better convergence rates in some settings. The SARAH gradient estimator is
\begin{equation}
\label{eq:sarah}
    \widetilde{\nabla}^{\textnormal{\tiny SARAH}}_{k+1} \defeq \begin{cases}
    \frac{1}{b} \left( \sum_{j \in J_k} \nabla f_j(x_{k+1}) - \nabla f_j(x_k) \right) + \widetilde{\nabla}^{\textnormal{\tiny SARAH}}_k & \textnormal{w.p. } 1 - \frac{1}{p}, \\
    \nabla f(x_{k+1}) & \textnormal{w.p. } \frac{1}{p}.
    \end{cases}
\end{equation}
As with SVRG, we consider a slight variant of the SARAH estimator in this work, where we compute the full gradient at every step with probability $1/p$. The SARGE gradient estimator is similar to the SAGA estimator.
\begin{align}
\label{eq:sarge}
    \widetilde{\nabla}^{\textnormal{\tiny SARGE}}_{k+1} \defeq \frac{1}{b} & \left( \sum_{j \in J_k} \nabla f_j(x_{k+1}) - \psi_k^j \right) + \frac{1}{n} \sum_{i=1}^n \psi_k^i - \left(1 - \frac{b}{n}\right) \left( \frac{1}{b} \sum_{j \in J_k} \nabla f_j(x_k) - \widetilde{\nabla}^{\textnormal{\tiny SARGE}}_k \right),
\end{align}
where the variables $\psi_k^i$ follow the update rule $\psi_{k+1}^j = \nabla f_j(x_{k+1}) - \left(1 - \frac{b}{n}\right)$ $\nabla f_j (x_k)$ for all $j\in J_k$, and $\psi_{k+1}^i = \psi_k^i$ otherwise. Like SAGA, SARGE uses stored gradient information to avoid having to compute the full gradient. These estimators differ from SAGA and SVRG because they are biased (i.e., $\E \widetilde{\nabla}_{k+1} \not = \nabla f(x_{k+1})$). Many works have recently shown that algorithms using the SARAH or SARGE gradient estimators achieve faster convergence rates than algorithms using other estimators in certain settings. Importantly, these recursive gradient methods produce algorithms that achieve the \textit{oracle complexity lower bound} for non-convex composite optimisation \cite{techrepo,spider,spiderm,spiderboost,proxsarah}. They have not yet been shown to achieve optimal convergence rates for convex problems.

\paragraph{Variance Reduction with Negative Momentum.} Starting with Katyusha \cite{katyusha} and followed by many others \cite{Natasha,Natasha2,katyushaX,FSVRG,subsamp,MiG,varag}, a family of stochastic gradient algorithms have recently emerged that achieve the optimal convergence rates implied by \cite{srebrocomplexity}. There are two components to these algorithms that make this acceleration possible. First, these algorithms incorporate momentum into each iteration, either through linear coupling \cite{lincoup}, as in the case of \cite{katyusha,katyushaX,Natasha,Natasha2,subsamp}, or in a more traditional manner reminiscent of Nesterov's accelerated gradient descent \cite{FSVRG,MiG}. Second, these algorithms incorporate an ``anchor-point'' into their momentum updates that supposedly softens the negative effects of bad gradient evaluations. Almost all of these algorithms are an accelerated form of SVRG with updates of the form
\begin{align}
x_{k+1} &= \widetilde{x} + \tau_k(x_k - \widetilde{x}), \quad \textnormal{or} \\
x_{k+1} &= \tau_1 z_k + \tau_2 \widetilde{x} + (1-\tau_1 - \tau_2) y_k,
\end{align}
using traditional acceleration or linear coupling, respectively ($z_k$ and $y_k$ are as defined in Algorithm \ref{alg:1}, and $\tau_k, \tau_1, \tau_2 \in [0,1]$). We see that these updates ``attract'' the current iterate toward a ``safe'' point, $\widetilde{x}$, where we know the full gradient. Because of this ``attractive'' rather than ``repulsive'' quality, updates of this type have been termed ``negative momentum''.

There are several issues with negative momentum. Most importantly, negative momentum is algorithm-specific. Unlike Nesterov's method of momentum or linear coupling, negative momentum cannot be applied to other stochastic gradient algorithms. SAGA, for example, cannot be accelerated using negative momentum of this form, because there does not exist a point $\widetilde{x}$ where we compute the full gradient (however, see \cite{subsamp}). Also, numerical experiments show that negative momentum is often unnecessary to achieve acceleration (see the discussion in \cite{katyusha} or Section \ref{sec:experiments}, for example), suggesting that acceleration is possible without it.

\paragraph{Other Accelerated Methods.} Outside of the family of algorithms using negative-momentum, there exist many stochastic gradient methods that achieve near-optimal convergence rates, including Catalyst \cite{catalyst} and RPDG \cite{rpdg}. Catalyst's convergence rates are a logarithmic factor worse than Katyusha's when the objective is strongly convex or smooth. RPDG achieves optimal convergence rates in the strongly convex setting, matching Katyusha's rate. When strong convexity is not present, RPDG achieves optimal rates up to a logarithmic factor. We include further discussion of these and other related works in Section \ref{sec:prior}.

\paragraph{Contributions.} In this work, we provide a framework to accelerate many stochastic gradient algorithms that does not require negative momentum. We introduce the \emph{MSEB property}, a property that implies natural bounds on the bias and MSE of a gradient estimator, and we prove accelerated convergence rates for all MSEB gradient estimators. As special cases, we show that incorporating the SAGA, SVRG, SARAH, and SARGE gradient estimators into the framework of Algorithm \ref{alg:1} creates a stochastic gradient method with an $\mathcal{O}\left( 1 / T^2 \right)$ convergence rate without strong convexity, and a linear convergence rate that scales with $\sqrt{\kappa}$ when strong convexity is present, achieving the optimal convergence rates in both cases up to a factor of $n$ depending on the bias and MSE of the estimator.

\paragraph{Roadmap.} We introduce our algorithm and state our main result in Section \ref{sec:algs}. We compare our results to existing work in Section \ref{sec:prior}. The next four sections are devoted to proving our main results. In Section \ref{sec:pre}, we review elementary results on the subdifferential relation, results on the proximal operator, and lemmas from convex analysis. We prove a general inequality for accelerated stochastic gradient methods using any stochastic gradient estimator in Section \ref{sec:ineq}. This inequality implies that many stochastic gradient methods can be accelerated using our momentum scheme; to prove an accelerated convergence rate for a specific algorithm, we only need to apply an algorithm-specific bound on the MSE and bias of the gradient estimator. We do this for the SAGA, SVRG, SARAH, and SARGE gradient estimators in Section \ref{sec:convrates}. Finally, in Section \ref{sec:experiments}, we demonstrate the performance of our algorithms in numerical experiments.

\begin{figure}[t]
\begin{center}
\begin{minipage}{0.975\linewidth}
\begin{algorithm}[H]
\caption{A Universal Framework for Acceleration}
\label{alg:1}
  \begin{algorithmic}[1]
  \Require Set step size $\gamma_k$ and momentum parameter $\tau_k$ as in Theorem \ref{thm:main1} if $\mu = 0$ or as in Theorem \ref{thm:main2} otherwise, and gradient estimator $\widetilde{\nabla}$.
  \State Initialise $z_0 = y_0 = x_0$.
  \For{$k = 0,1,\cdots, T-1$}
    \State $x_{k+1} \leftarrow \tau_k z_k + (1-\tau_k)y_k$.
    \State Compute $\widetilde{\nabla}_{k+1}$, an estimate of $\nabla f(x_{k+1})$.
    \State $z_{k+1} \leftarrow \prox_{\gamma_k g}\left( z_k - \gamma_k \widetilde{\nabla}_{k+1} \right)$.
    \State $y_{k+1} \leftarrow \tau_k z_{k+1} + (1-\tau_k) y_k $.
  \EndFor
  \end{algorithmic}
\end{algorithm}
\end{minipage}
\end{center}
\end{figure}

\section{Algorithm and Main Results}
\label{sec:algs}

The algorithm we propose is outlined in Algorithm \ref{alg:1}. Algorithm \ref{alg:1} takes as input any stochastic gradient estimator $\widetilde{\nabla}_{k+1}$, so it can be interpreted as a framework for accelerating existing stochastic gradient methods. This algorithm incorporates momentum through linear coupling \cite{lincoup}, but is related to Nesterov's accelerated gradient method after rewriting $x_{k+1}$ as follows:
\begin{equation}
    x_{k+1} = y_k + (1-\tau_k) (y_k - y_{k-1}).
\end{equation}
With $\tau_k = 1$, there is no momentum, and the momentum becomes more aggressive for smaller $\tau_k$. Although linear coupling provides the impetus for our acceleration framework, similar acceleration schemes appear in earlier works, including Auslender and Teboulle, 2006 \cite{auslender_teboulle_2006}, and Ghadimi and Lan, 2016 \cite{ghadimi_lan_16}.

We show that as long as the MSE and bias of a stochastic gradient estimator satisfy certain bounds and the parameters $\gamma_k$ and $\tau_k$ are chosen correctly, Algorithm \ref{alg:1} converges at an accelerated rate. There are three principles for choosing $\gamma_k$ and $\tau_k$ so that Algorithm \ref{alg:1} achieves acceleration.
\begin{enumerate}
    \item The step size $\gamma_k$ should be small, roughly $\mathcal{O}\left( 1 / n \right)$ with the exact dependence on $n$ decreasing with larger MSE and bias of the gradient estimator.
    \item On non-strongly convex objectives, the step size should grow sufficiently slowly, so that $\gamma_k^2 \left(1 - \rho \right) \le \gamma_{k-1}^2 \left(1 - \frac{\rho}{2}\right)$ with $\rho = \mathcal{O} \left( 1 / n \right)$ decreasing with larger MSE and bias.
    \item The momentum should become more aggressive with smaller step sizes, with $\tau_k = \mathcal{O}\left(\frac{1}{n \gamma_k}\right)$.
\end{enumerate}
For strongly convex objectives, $\gamma_k$ and $\tau_k$ can be kept constant.

For Algorithm \ref{alg:1} to converge, the stochastic gradient estimator must have controlled bias and MSE. Specifically, we require the estimator to satisfy the MSEB property,\footnote{Because this property asserts bounds on the mean-squared-error and bias of a stochastic gradient estimator, the name MSEB is a natural choice. We suggest the pronunciation ``M-SEB''.} introduced below.

\begin{definition}
    For any sequence $\{x_{k+1}\}$, let $\widetilde{\nabla}_{k+1}$ be a stochastic gradient estimator generated from the points $\{x_{\ell+1}\}_{\ell = 0}^k$. The estimator $\widetilde{\nabla}_{k+1}$ satisfies the \emph{MSEB}$(M_1,M_2,\rho_M,$ $\rho_B, \rho_F)$ property if there exist constants $M_1, M_2 \ge 0$, $\rho_M,\rho_B,\rho_F \in (0,1]$, and sequences $\mathcal{M}_k$ and $\mathcal{F}_k$ satisfying
    \begin{equation}
        \nabla f(x_{k+1}) - \E \widetilde{\nabla}_{k+1} = \left( 1 - \rho_B \right) \left( \nabla f(x_k) - \widetilde{\nabla}_k \right),
    \end{equation}
    \begin{align}
        & \mathbb{E} \|\widetilde{\nabla}_{k+1} - \nabla f(x_{k+1})\|^2 \le \mathcal{M}_k,
    \end{align}
    \begin{equation}
    \label{eq:mseb}
        \mathcal{M}_k \le \frac{M_1}{n} \sum_{i=1}^n \mathbb{E} \|\nabla f_i (x_{k+1}) - \nabla f_i(x_k)\|^2 + \mathcal{F}_k + (1-\rho_M) \mathcal{M}_{k-1},
    \end{equation}
    and
    \begin{equation}
        \mathcal{F}_k \le \sum_{\ell=0}^k \frac{M_2 (1-\rho_F)^{k - \ell} }{n} \sum_{i=1}^n \mathbb{E} \|\nabla f_i (x_{\ell+1}) - \nabla f_i(x_\ell)\|^2.
    \end{equation}
\end{definition}
On a high-level, the MSEB property guarantees that the bias and MSE of the gradient estimator decrease sufficiently quickly with $k$.

\begin{remark}
In \cite{nocedalReview}, the authors study the convergence of unbiased stochastic gradient methods under first- and second-moment bounds on the gradient estimator. The bounds implied by the MSEB property are similar, but with the crucial difference that they are \emph{non-Markovian}; we allow our bound on $\mathcal{M}_k$ to depend on all preceding iterates, not just $x_k$.
\end{remark}

In this work, we show that most existing stochastic gradient estimators satisfy the MSEB property, including SAGA, SVRG, SARAH, SARGE, and the full gradient estimator. We list their associated parameters in the following propositions.

\begin{proposition}
\label{prop:full}
    The full gradient estimator $\widetilde{\nabla}_{k+1} = \nabla f(x_{k+1})$ satisfies the MSEB property with $M_1 = M_2 = 0$ and $\rho_M = \rho_B = \rho_F = 1$.
\end{proposition}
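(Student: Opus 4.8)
The plan is to exhibit explicit sequences $\mathcal{M}_k$ and $\mathcal{F}_k$ and then verify each of the four requirements of the MSEB property directly, observing that with the proposed constants every inequality collapses to the trivial bound $0 \le 0$. The natural candidates are $\mathcal{M}_k = 0$ and $\mathcal{F}_k = 0$ for all $k$, reflecting the fact that the full gradient estimator carries no error whatsoever.

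First I would establish the underlying observation that $\widetilde{\nabla}_{k+1} = \nabla f(x_{k+1})$ is deterministic given the first $k$ iterates. In Algorithm \ref{alg:1}, $x_{k+1} = \tau_k z_k + (1-\tau_k) y_k$ is a fixed function of $z_k$ and $y_k$, both of which are measurable with respect to the first $k$ iterates; hence $\nabla f(x_{k+1})$ is too, and $\E \widetilde{\nabla}_{k+1} = \nabla f(x_{k+1})$. This immediately yields the bias identity: the left-hand side $\nabla f(x_{k+1}) - \E\widetilde{\nabla}_{k+1}$ vanishes, while the right-hand side $(1-\rho_B)(\nabla f(x_k) - \widetilde{\nabla}_k)$ also vanishes because $\rho_B = 1$.

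Next I would check the MSE bound and the two recursions. Since $\widetilde{\nabla}_{k+1} - \nabla f(x_{k+1}) = 0$ identically, we have $\E\|\widetilde{\nabla}_{k+1} - \nabla f(x_{k+1})\|^2 = 0 = \mathcal{M}_k$, so the second requirement holds with equality. With $M_1 = 0$ and $\rho_M = 1$, the recursion for $\mathcal{M}_k$ reduces its right-hand side to $\mathcal{F}_k = 0$; and with $M_2 = 0$, the bound on $\mathcal{F}_k$ reduces its right-hand side to $0$. Both are satisfied by the choice $\mathcal{M}_k = \mathcal{F}_k = 0$.

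There is no genuine obstacle here: the content of the proposition is simply that the exact gradient is \emph{unbiased} with \emph{zero} mean-squared error, and the chosen parameters encode the fact that no history is required to control either quantity. The only point requiring a moment's care is the measurability argument in the first step, ensuring that conditioning on the first $k$ iterates makes $\nabla f(x_{k+1})$ a constant, so that the estimator is genuinely unbiased rather than merely having small bias.
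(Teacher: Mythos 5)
Your proposal is correct and follows essentially the same route as the paper's (one-line) proof: the paper simply observes that the bias and MSE of the full gradient estimator are zero, which is exactly what you verify in detail by taking $\mathcal{M}_k = \mathcal{F}_k = 0$ and checking that every inequality in the MSEB definition collapses to $0 \le 0$. Your added measurability remark is a harmless elaboration of the same argument, not a different approach.
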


\begin{proof}
    The bias and MSE of the full gradient estimator are zero, so it is clear these parameter choices satisfy the bounds in the MSEB property.
\end{proof}

Although trivial, Proposition \ref{prop:full} allows us to show that our analysis recovers the accelerated convergence rates of the inertial forward-backward algorithm as a special case. The MSEB property applies to the SAGA and SVRG estimators non-trivially.

\begin{proposition}
\label{prop:saga}
    The SAGA gradient estimator \eqref{eq:saga} satisfies the MSEB property with $M_1 = \mathcal{O} ( n/b^2 )$, $\rho_M = \mathcal{O} ( b/n )$, $M_2 = 0$, and $\rho_B = \rho_F = 1$. Setting $p = \mathcal{O}( n / b )$, the SVRG gradient estimator \eqref{eq:svrg} satisfies the MSEB property with the same parameters.
\end{proposition}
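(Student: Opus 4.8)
The plan is to use the fact that SAGA is unbiased to dispose of four of the five parameters at once, and then to construct the sequence $\mathcal{M}_k$ explicitly so that only the mean-squared-error recursion \eqref{eq:mseb} remains to be checked. Since $\E \widetilde{\nabla}^{\textnormal{\tiny SAGA}}_{k+1} = \nabla f(x_{k+1})$, the left-hand side of the bias identity vanishes, which forces $\rho_B = 1$; and because the bound I will derive is exactly of the form \eqref{eq:mseb} with no extra history, I may take $\mathcal{F}_k \equiv 0$, i.e.\ $M_2 = 0$ and $\rho_F = 1$. Everything then reduces to controlling the mean-squared error.

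First I would write $a_i \defeq \nabla f_i(x_{k+1}) - \nabla f_i(\varphi_k^i)$, so that $\widetilde{\nabla}^{\textnormal{\tiny SAGA}}_{k+1} = \tfrac1b \sum_{j\in J_k} a_j + \tfrac1n \sum_i \nabla f_i(\varphi_k^i)$, where only the first sum is random given the past. The estimator is thus a conditionally unbiased minibatch average plus a constant, so its conditional MSE equals the variance of the average; the finite-population variance formula for a uniform size-$b$ sample, after discarding the correction factor $\tfrac{n-b}{n-1} \le 1$, gives
\[ \E\big\|\widetilde{\nabla}^{\textnormal{\tiny SAGA}}_{k+1} - \nabla f(x_{k+1})\big\|^2 = \textnormal{Var}_{J_k}\Big(\tfrac1b\sum_{j\in J_k} a_j\Big) \le \frac{1}{bn}\sum_{i=1}^n \|a_i\|^2. \]
Taking total expectation motivates the definition $\mathcal{M}_k \defeq \frac{1}{bn}\sum_{i=1}^n \mathbb{E}\|\nabla f_i(x_{k+1}) - \nabla f_i(\varphi_k^i)\|^2$, which dominates the MSE by construction.

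The heart of the argument is the recursion for $\mathcal{M}_k$. Splitting $a_i = \big(\nabla f_i(x_{k+1}) - \nabla f_i(x_k)\big) + \big(\nabla f_i(x_k) - \nabla f_i(\varphi_k^i)\big)$ and applying Young's inequality with a free parameter $\beta > 0$ peels off exactly the increment term $\tfrac1n\sum_i \mathbb{E}\|\nabla f_i(x_{k+1}) - \nabla f_i(x_k)\|^2$ appearing in \eqref{eq:mseb}. For the residual I would use that $\varphi_k^i = x_k$ when $i \in J_{k-1}$ (probability $b/n$) and $\varphi_k^i = \varphi_{k-1}^i$ otherwise, while $x_k$ and $\varphi_{k-1}^i$ are both measurable before $J_{k-1}$ is drawn; hence
\[ \frac1n\sum_{i=1}^n \mathbb{E}\|\nabla f_i(x_k) - \nabla f_i(\varphi_k^i)\|^2 = \Big(1-\frac bn\Big)\frac1n\sum_{i=1}^n \mathbb{E}\|\nabla f_i(x_k) - \nabla f_i(\varphi_{k-1}^i)\|^2 = \Big(1-\frac bn\Big)\,b\,\mathcal{M}_{k-1}. \]
After dividing by $b$ the spurious factor cancels, leaving a residual coefficient $(1+\beta)(1-b/n)$; choosing $\beta = b/(2n)$ makes this at most $1 - b/(2n)$, so $\rho_M = b/(2n) = \mathcal{O}(b/n)$, while the increment coefficient becomes $\tfrac1b(1+\beta^{-1}) = \tfrac1b(1 + 2n/b) = \mathcal{O}(n/b^2) = M_1$.

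For SVRG the argument is structurally identical once the per-index refresh probability $b/n$ is replaced by the snapshot-refresh probability $1/p$: the same variance bound yields $\mathcal{M}_k = \tfrac{1}{bn}\sum_i \mathbb{E}\|\nabla f_i(x_{k+1}) - \nabla f_i(\widetilde{x})\|^2$, and since the shared snapshot $\widetilde{x}$ is replaced by the current iterate with probability $1/p$, linearity of expectation makes the residual contract by $(1-1/p)$ in place of $(1-b/n)$; taking $p = \mathcal{O}(n/b)$ aligns $1/p$ with $b/n$ and reproduces the same $M_1$ and $\rho_M$. The step demanding the most care is the residual identity above: one must track the algorithm's timing to confirm that $x_k$ and the stored points are $J_{k-1}$-measurable, so that the refresh event is independent of the gradient differences and the expectation factorises exactly into the contraction $(1-b/n)$ (respectively $1-1/p$), with no leftover cross-terms.
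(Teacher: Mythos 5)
Your proposal is correct and follows essentially the same route as the paper's own proof (Lemmas \ref{lem:firstvarbound} and \ref{lem:varbound} and Corollary \ref{lem:svrgvarbound}): the same choice of $\mathcal{M}_k = \tfrac{1}{bn}\sum_i \mathbb{E}\|\nabla f_i(x_{k+1}) - \nabla f_i(\varphi_k^i)\|^2$, the same Young's-inequality split with parameter $b/(2n)$ giving $M_1 = \mathcal{O}(n/b^2)$ and $\rho_M = b/(2n)$, and the same $(1-b/n)$ (resp.\ $(1-1/p)$) contraction obtained from the measurability of $x_k$ and the stored points prior to drawing $J_{k-1}$. The only cosmetic difference is that you derive the initial bound $\tfrac{1}{bn}\sum_i\|a_i\|^2$ from the finite-population sampling-variance formula, whereas the paper invokes its orthogonality lemma for mean-zero summands; both yield the identical bound.
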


We prove Proposition \ref{prop:saga} in Appendix \ref{sec:saga}. We are able to choose $\rho_B = 1$ for the SAGA and SVRG gradient estimators because they are unbiased, and we can choose $M_2 = 0$ and $\rho_F = 1$ for these estimators because they admit Markovian bounds on their variance. This is not true for SARAH and SARGE, but these estimators are still compatible with our framework. We prove Propositions \ref{prop:sarah} and \ref{prop:sarge} in Appendices \ref{sec:sarah} and \ref{sec:sarge}, respectively. 

\begin{proposition}
\label{prop:sarah}
    Setting $p = \mathcal{O}( n )$, the SARAH gradient estimator \eqref{eq:sarah} satisfies the MSEB property with $M_1 = \mathcal{O}(1)$, $M_2 = 0$, $\rho_M = \mathcal{O}( 1 / n )$, $\rho_B = \mathcal{O}(1/n)$, and $\rho_F = 1$.
\end{proposition}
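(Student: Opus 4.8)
The plan is to verify each of the four requirements of the MSEB property directly from the recursive definition \eqref{eq:sarah}, tracking constants. Throughout I would write $q = 1/p$ for the refresh probability, set $e_k \defeq \widetilde{\nabla}^{\textnormal{\tiny SARAH}}_k - \nabla f(x_k)$ for the estimator error, and abbreviate $d_i \defeq \nabla f_i(x_{k+1}) - \nabla f_i(x_k)$, so that $\frac{1}{n}\sum_i d_i = \nabla f(x_{k+1}) - \nabla f(x_k)$. I would condition on the $\sigma$-algebra generated by everything up to and including $x_{k+1}$, under which $e_k$, $\nabla f(x_k)$, and the $d_i$ are fixed, and the only remaining randomness is the step-$k$ coin flip together with the mini-batch $J_k$.

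For the bias relation, I would split the conditional expectation $\E \widetilde{\nabla}^{\textnormal{\tiny SARAH}}_{k+1}$ over the two branches of \eqref{eq:sarah}. On the refresh branch (probability $q$) the estimator equals $\nabla f(x_{k+1})$ exactly; on the recursive branch (probability $1-q$) I would apply the elementary sampling identity $\mathbb{E}_{J_k}\big[\frac{1}{b}\sum_{j\in J_k} d_j\big] = \frac{1}{n}\sum_i d_i = \nabla f(x_{k+1}) - \nabla f(x_k)$, valid because $J_k$ is uniform over size-$b$ subsets. Combining the branches gives $\E\widetilde{\nabla}^{\textnormal{\tiny SARAH}}_{k+1} = \nabla f(x_{k+1}) - (1-q)\big(\nabla f(x_k) - \widetilde{\nabla}^{\textnormal{\tiny SARAH}}_k\big)$, which is exactly the required relation with $\rho_B = q = 1/p$.

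For the MSE, the key observation is that on the recursive branch the error telescopes as $e_{k+1} = e_k + \xi$, where $\xi \defeq \frac{1}{b}\sum_{j\in J_k} d_j - \frac{1}{n}\sum_i d_i$ is the mean-zero sampling noise, while on the refresh branch $e_{k+1} = 0$. Expanding $\|e_k + \xi\|^2$ and using $\mathbb{E}_{J_k}[\xi] = 0$ (the cross term vanishes precisely because $e_k$ is fixed under the conditioning) yields $\E\|e_{k+1}\|^2 = (1-q)\big(\mathbb{E}_{J_k}\|\xi\|^2 + \|e_k\|^2\big)$. The finite-population variance bound $\mathbb{E}_{J_k}\|\xi\|^2 \le \frac{1}{b}\cdot\frac{1}{n}\sum_i \|d_i\|^2$ controls the sampling term. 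Taking total expectations via the tower property and setting $\mathcal{M}_k \defeq \mathbb{E}\|e_{k+1}\|^2$ and $\mathcal{F}_k \equiv 0$ produces the MSE recursion \eqref{eq:mseb} with $M_1 = (1-q)/b \le 1/b = \mathcal{O}(1)$, $\rho_M = q = 1/p$, $M_2 = 0$, and $\rho_F = 1$. Choosing $p = \Theta(n)$ then makes $\rho_B = \rho_M = 1/p = \mathcal{O}(1/n)$, giving all the stated parameters.

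The sampling identities and the finite-population variance bound are routine; the point requiring care is the bookkeeping between the step-$k$ conditional expectation and the total expectation, and the recursive coupling $e_{k+1} = e_k + \xi$. It is exactly this coupling, rather than a Markovian one-step bound, that forces $\mathcal{M}_k$ to inherit $(1-\rho_M)\mathcal{M}_{k-1}$ and is the source of SARAH's variance reduction. I expect the main obstacle to be stating the conditioning cleanly enough that $e_k$ may legitimately be treated as deterministic when evaluating the step-$k$ expectations, so that the cross term genuinely vanishes and the recursion closes with the claimed constants.
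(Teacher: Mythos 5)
Your proposal is correct and follows essentially the same route as the paper's Lemma \ref{lem:sarahmseb}: split over the refresh/recursive branches to get the bias contraction $\rho_B = 1/p$ from the uniform-sampling identity, then derive the one-step MSE recursion with $\mathcal{M}_k = \mathbb{E}\|\widetilde{\nabla}_{k+1} - \nabla f(x_{k+1})\|^2$, $\mathcal{F}_k \equiv 0$, $\rho_M = 1/p$, $\rho_F = 1$. Your telescoping $e_{k+1} = e_k + \xi$ with mean-zero sampling noise $\xi$ is a cleaner packaging of the paper's six-term expansion (which keeps $\nabla f(x_k) - \nabla f(x_{k+1})$ and $\widetilde{\nabla}_{k+1} - \widetilde{\nabla}_k$ separate until the cross terms cancel), and your centered finite-population variance bound even improves $M_1$ from $1$ to $\mathcal{O}(1/b)$ --- immaterial for the stated $M_1 = \mathcal{O}(1)$.
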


\begin{proposition}
\label{prop:sarge}
        The SARGE gradient estimator \eqref{eq:sarge} satisfies the MSEB property with $M_1 = \mathcal{O}(1/n)$, $M_2 = \mathcal{O}( 1/n^2 )$, $\rho_M = \mathcal{O}(b/n)$, $\rho_B = \mathcal{O}(b/n)$, and $\rho_F = \mathcal{O} (b/n)$.
\end{proposition}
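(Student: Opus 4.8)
The plan is to verify the four conditions of the MSEB property in turn, exploiting the close analogy between SARGE and SARAH (Proposition \ref{prop:sarah}) while accounting for the SAGA-style stored gradients $\psi_k^i$ that replace SARAH's periodic full-gradient refresh. First I would establish the bias recursion, which is the clean part. Writing $\E$ for the expectation conditioned on the first $k$ iterates and using $\E[\frac{1}{b}\sum_{j\in J_k} h_j] = \frac{1}{n}\sum_{i=1}^n h_i$ for any $\{h_i\}$, the two stored-gradient averages cancel and $\E[\frac{1}{b}\sum_{j\in J_k}\nabla f_j(x_k)] = \nabla f(x_k)$, so one obtains $\E\widetilde{\nabla}_{k+1} = \nabla f(x_{k+1}) - (1-\frac{b}{n})(\nabla f(x_k)-\widetilde{\nabla}_k)$. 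This is exactly the bias identity with $\rho_B = b/n$.

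Next I would decompose the mean-squared error as conditional variance plus squared bias, $\E\|\widetilde{\nabla}_{k+1}-\nabla f(x_{k+1})\|^2 = \E\|\widetilde{\nabla}_{k+1}-\E\widetilde{\nabla}_{k+1}\|^2 + (1-\frac{b}{n})^2\|\nabla f(x_k)-\widetilde{\nabla}_k\|^2$. Taking full expectations, the squared-bias term is controlled by the previous MSE bound, and since $(1-\frac{b}{n})^2\le 1-\frac{b}{n}$ this contributes the $(1-\rho_M)\mathcal{M}_{k-1}$ term with $\rho_M = b/n$. For the conditional variance, the key algebraic observation is that the fluctuating part of $\widetilde{\nabla}_{k+1}$ is $\frac{1}{b}\sum_{j\in J_k} d_j - \frac{1}{n}\sum_i d_i$ with $d_i = \nabla f_i(x_{k+1}) - (1-\frac{b}{n})\nabla f_i(x_k) - \psi_k^i$; crucially the raw gradient $\nabla f_i(x_{k+1})$ cancels against the stored gradient, so each $d_i$ is a combination of gradient \emph{differences} $\nabla f_i(x_{\ell+1})-\nabla f_i(x_\ell)$ rather than raw gradients. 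The sampling-without-replacement variance bound then gives $\E\|\widetilde{\nabla}_{k+1}-\E\widetilde{\nabla}_{k+1}\|^2 \le \frac{1}{bn}\sum_{i=1}^n\|d_i\|^2$.

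The heart of the proof is to control $\frac{1}{n}\sum_i\mathbb{E}\|d_i\|^2$ without the history inflating the constants. Splitting $d_i$ into the one-step change $(\nabla f_i(x_{k+1})-\nabla f_i(x_k)) - (1-\frac{b}{n})(\nabla f_i(x_k)-\nabla f_i(x_{k-1}))$ and the residual staleness that $\psi_k^i$ carries relative to its current target, I would introduce a potential $H_k \defeq \frac{1}{n}\sum_i \mathbb{E}\|\psi_k^i - (\nabla f_i(x_k)-(1-\frac{b}{n})\nabla f_i(x_{k-1}))\|^2$ that measures how stale the stored gradients are. Because each index enters $J_k$ with probability $b/n$, its stored value is refreshed to its exact target with that probability and is otherwise only displaced by a fresh gradient difference, yielding a one-step geometric recursion $H_{k+1}\lesssim (1-\frac{b}{n})H_k + (\text{current gradient differences})$. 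Unrolling this recursion produces exactly the geometrically weighted history sum defining $\mathcal{F}_k$, with decay $\rho_F = b/n$; this residual staleness is precisely the reason SARGE needs a nonzero $\mathcal{F}_k$ and $M_2>0$, in contrast to SARAH, whose full-gradient refresh lets it take $M_2=0$ and $\rho_F=1$.

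Finally I would assemble these pieces: define $\mathcal{M}_k$ to be the variance bound above, split $\frac{1}{n}\sum_i\|d_i\|^2$ by Young's inequality into a current-step term feeding $\frac{M_1}{n}\sum_i\mathbb{E}\|\nabla f_i(x_{k+1})-\nabla f_i(x_k)\|^2$ and the staleness potential $H_k$ feeding $\mathcal{F}_k$ after unrolling, and combine with the squared-bias contraction. I expect the main obstacle to be bookkeeping the constants so that the prefactor $\frac{1}{bn}$ from subsampling, the refresh probability $b/n$, and the geometric weights combine to give exactly $M_1=\mathcal{O}(1/n)$, $M_2=\mathcal{O}(1/n^2)$, and $\rho_M=\rho_B=\rho_F=\mathcal{O}(b/n)$. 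In particular, the Young's-inequality weights and the definition of $H_k$ must be chosen so that the bulk of the accumulated history is absorbed into the contraction $(1-\rho_M)\mathcal{M}_{k-1}$, leaving only an $\mathcal{O}(1/n^2)$-weighted residual in $\mathcal{F}_k$; getting this balance right, rather than any single estimate, is the delicate part.
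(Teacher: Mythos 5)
Your route is genuinely different from the paper's. The paper rewrites SARGE as $\widetilde{\nabla}^{\textnormal{\tiny SARGE}}_{k+1} = \widetilde{\nabla}^{\textnormal{\tiny SAGA}}_{k+1} - (1-\tfrac{b}{n})(\widetilde{\nabla}^{\xi\textnormal{\tiny-SAGA}}_{k+1} - \widetilde{\nabla}^{\textnormal{\tiny SARGE}}_k)$ for an auxiliary SAGA-type estimator anchored at the shifted points $\xi_k^i$, bounds $\mathbb{E}\|\widetilde{\nabla}_{k+1}-\widetilde{\nabla}_k\|^2$ through that identity, and then runs a SARAH-style expansion of the MSE with three inner products to extract the contraction $(1-\tfrac{b}{n}+\tfrac{3b^2}{2n^2})\mathcal{M}_{k-1}$. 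You instead use the exact conditional bias--variance identity, which cleanly yields the contraction $(1-\tfrac{b}{n})^2\mathcal{M}_{k-1}$ (with no spurious $\tfrac{3b^2}{2n^2}$ correction, hence no $b\le n/3$ restriction at that step), and reduce everything to the per-index deviations $d_i = \nabla f_i(x_{k+1}) - (1-\tfrac{b}{n})\nabla f_i(x_k) - \psi_k^i$ via the subsampling bound $\tfrac{1}{bn}\sum_i\|d_i\|^2$. Your bias identity and $\rho_B=b/n$ match the paper exactly, and your staleness potential $H_k$ with its geometric refresh recursion is the potential-function analogue of the paper's direct computation of the last-sample-time distribution for $\|\nabla f_j(\varphi_k^j)-\nabla f_j(\xi_k^j)\|^2$. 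In these respects your decomposition is arguably cleaner.

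There is, however, a concrete gap in the step where you split $d_i$. Writing $d_i = A_i + B_i$ with $A_i = (\nabla f_i(x_{k+1})-\nabla f_i(x_k)) - (1-\tfrac{b}{n})(\nabla f_i(x_k)-\nabla f_i(x_{k-1}))$ and $B_i$ the staleness relative to the current target introduces the point $x_{k-1}$ artificially: $d_i$ itself contains no $\nabla f_i(x_{k-1})$ (it equals $[\nabla f_i(x_{k+1})-\nabla f_i(x_{\ell_i})] - (1-\tfrac{b}{n})[\nabla f_i(x_k)-\nabla f_i(x_{\ell_i-1})]$ with $\ell_i$ the last sample time of $i$), so the $\nabla f_i(x_{k-1})$ terms cancel between $A_i$ and $B_i$, and applying Young's inequality to $\|A_i+B_i\|^2$ destroys that cancellation. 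The surviving term $\tfrac{1}{bn}\sum_i\|\nabla f_i(x_k)-\nabla f_i(x_{k-1})\|^2$ can only be placed in $\mathcal{F}_k$, forcing $M_2 \gtrsim 1/b$ rather than $\mathcal{O}(1/n^2)$; and your fallback of absorbing it into $(1-\rho_M)\mathcal{M}_{k-1}$ cannot work, since there is no inequality relating $\sum_i\|\nabla f_i(x_k)-\nabla f_i(x_{k-1})\|^2$ to the previous MSE. This inflates $\Theta_2$ and hence $c$ by a factor of roughly $n/b$, so you would still obtain an accelerated rate but not the constants claimed in the proposition. The fix is to group $d_i$ so that every historical gradient difference carries a factor of $b/n$, either as the explicit damping in $d_i = [\nabla f_i(x_{k+1})-\nabla f_i(x_k)] + \tfrac{b}{n}[\nabla f_i(x_k)-\nabla f_i(x_{\ell_i})] - (1-\tfrac{b}{n})[\nabla f_i(x_{\ell_i})-\nabla f_i(x_{\ell_i-1})]$ or as the probability $\tfrac{b}{n}(1-\tfrac{b}{n})^{k-\ell}$ of the last sample time; the paper's SAGA/$\xi$-SAGA rewriting accomplishes exactly this bookkeeping automatically. (Separately, note that your target $M_1=\mathcal{O}(1/n)$ is what the proposition states but not what the paper's own Corollary in Appendix E proves, which gives $M_1=12$; your $\mathcal{O}(1/b)$ is of the same order as the paper's actual bound.)
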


All gradient estimators satisfying the MSEB property can be accelerated using the framework of Algorithm \ref{alg:1}, as the following two theorems guarantee.

\begin{theorem}[\textbf{Acceleration Without Strong Convexity}]
\label{thm:main1}

\noindent Suppose the stochastic gradient estimator $\widetilde{\nabla}_{k+1}$ satisfies the \emph{MSEB}$(M_1,M_2,$ $\rho_M,\rho_B,\rho_F)$ property. Define the constants 
\begin{equation*}
\Theta_1 \defeq 1 + \frac{8 (1-\rho_B)}{\rho_B^2 \rho_M}, \quad \Theta_2 \defeq \frac{M_1 \rho_F + 2 M_2}{\rho_M \rho_F}, \quad \textnormal{\emph{and}} \quad \rho \defeq \min\{ \rho_M, \rho_B, \rho_F \}.
\end{equation*}
With
\begin{equation*}
c \ge \max \left\{ \frac{2 \left(1 + \sqrt{1 + 8 \Theta_1 \Theta_2 (2 - \rho_M + \rho_B \rho_M)} \right)}{2 - \rho_M + \rho_B \rho_M}, 16 \Theta_1 \Theta_2 \right\},
\end{equation*}
and $\nu \ge \max \left\{0,\frac{2-6 \rho}{\rho} \right\}$,
set $\gamma_k = \frac{k+\nu+4}{2 c L}$ and $\tau_k = \frac{1}{c L \gamma_k}$. After $T$ iterations, Algorithm \ref{alg:1} produces a point $y_T$ satisfying the following bound on its suboptimality:
\begin{equation}
    \mathbb{E} F(y_T) - F(x^*) \le \frac{K_1 (\nu+2)(\nu+4)}{(T + \nu + 3)^2},
\end{equation}
where
\begin{equation}
    K_1 \defeq F(y_0) - F(x^*) + \frac{2 c L}{(\nu+2)(\nu+4)} \|z_0 - x^*\|^2.
\end{equation}
\end{theorem}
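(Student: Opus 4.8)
The plan is to first derive a single master per-iteration inequality that holds for \emph{any} gradient estimator, and only afterwards substitute the MSEB bounds. Write $e_{k+1} \defeq \nabla f(x_{k+1}) - \widetilde{\nabla}_{k+1}$, $\mathcal{D}_k \defeq F(y_k) - F(x^*)$, and $\mathcal{Z}_k \defeq \|z_k - x^*\|^2$, and note the coupling identity $y_{k+1} - x_{k+1} = \tau_k(z_{k+1} - z_k)$. I would combine four ingredients: (i) $L$-smoothness of $f$ applied to $y_{k+1}$, after which I replace $\nabla f(x_{k+1})$ by $\widetilde{\nabla}_{k+1} + e_{k+1}$; (ii) the prox optimality (three-point) inequality for $z_{k+1} = \prox_{\gamma_k g}(z_k - \gamma_k \widetilde{\nabla}_{k+1})$ evaluated at $x^*$, which produces $\tfrac12\|z_k-x^*\|^2 - \tfrac12\|z_{k+1}-x^*\|^2 - \tfrac12\|z_{k+1}-z_k\|^2$; (iii) convexity of $f$, using the coupling $\tau_k(x^*-z_k) = \tau_k x^* + (1-\tau_k)y_k - x_{k+1}$ to rewrite $\tau_k\<\nabla f(x_{k+1}), x^*-z_k\>$ as $\tau_k(f(x^*)-f(x_{k+1})) + (1-\tau_k)(f(y_k)-f(x_{k+1}))$, and convexity of $g$ at $y_{k+1} = \tau_k z_{k+1} + (1-\tau_k)y_k$; and (iv) the step-size coupling $\gamma_k\tau_k = 1/(cL)$, which makes $\tfrac{\tau_k}{2\gamma_k} = \tfrac{cL\tau_k^2}{2}$ so that the coefficient of $\|z_{k+1}-z_k\|^2$ is $-\tfrac{(c-1)L\tau_k^2}{2}$. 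A Young split of the cross term $\tau_k\<e_{k+1}, z_{k+1}-z_k\>$ absorbs this reservoir at the cost of $\tfrac{1}{2(c-1)L}\|e_{k+1}\|^2$ (this is where $c>1$ is first needed). Collecting terms and taking conditional expectation yields the master inequality
\[ \E \mathcal{D}_{k+1} \le (1-\tau_k)\mathcal{D}_k + \tfrac{\tau_k}{2\gamma_k}\left(\mathcal{Z}_k - \mathcal{Z}_{k+1}\right) + \tfrac{1}{2(c-1)L}\E\|e_{k+1}\|^2 + \tau_k \<\nabla f(x_{k+1}) - \E\widetilde{\nabla}_{k+1},\, z_k - x^*\>, \]
where the last term is the bias contribution.

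Second, I would telescope. With $\tau_k = 2/(k+\nu+4)$ and $\gamma_k = (k+\nu+4)/(2cL)$, the weight $w_k \defeq (k+\nu+4)^2$ satisfies two crucial identities: $w_k\cdot\tfrac{\tau_k}{2\gamma_k} = 2cL$ (constant, so the distance terms telescope exactly to $2cL\,\mathcal{Z}_0$), and $w_k(1-\tau_k) = (k+\nu+4)(k+\nu+2) = (k+\nu+3)^2 - 1 = w_{k-1} - 1$. Multiplying the master inequality by $w_k$, setting $G_k \defeq w_{k-1}\mathcal{D}_k$, and summing over $k = 0,\dots,T-1$, the stray $-1$ produces a free nonpositive term $-\sum_k \mathcal{D}_k$; its $k=0$ copy converts the initial weight $w_{-1} = (\nu+3)^2$ into $(\nu+3)^2 - 1 = (\nu+2)(\nu+4)$, which is exactly what $K_1$ records. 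Discarding $-2cL\,\mathcal{Z}_T$ and $-\sum_{k\ge1}\mathcal{D}_k$ gives $(T+\nu+3)^2\,\E\mathcal{D}_T \le (\nu+2)(\nu+4)\mathcal{D}_0 + 2cL\,\mathcal{Z}_0 + \sum_{k=0}^{T-1} w_k E_k$, where $E_k$ lumps together the weighted MSE and bias. Dividing by $(T+\nu+3)^2$ returns precisely the claimed bound, \emph{provided} $\sum_k w_k E_k \le 0$.

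Third --- and this is the crux --- I must show that the accumulated error and bias are nonpositive. Here the MSEB property enters: $\E\|e_{k+1}\|^2 \le \mathcal{M}_k$ with $\mathcal{M}_k$ obeying the non-Markovian recursion \eqref{eq:mseb} driven by the source $\tfrac{M_1}{n}\sum_i\|\nabla f_i(x_{k+1}) - \nabla f_i(x_k)\|^2$ (plus the tail sequence $\mathcal{F}_k$), and the bias collapses to $\nabla f(x_{k+1}) - \E\widetilde{\nabla}_{k+1} = (1-\rho_B)(\nabla f(x_k) - \widetilde{\nabla}_k)$, so the bias term is $\tau_k(1-\rho_B)\<\nabla f(x_k) - \widetilde{\nabla}_k,\, z_k - x^*\>$, which a further Young split separates into an $\|e_k\|^2$ piece and a $\mathcal{Z}_k$ piece. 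Rather than bound the sum termwise, I would augment the Lyapunov function, carrying forward weighted copies of $\mathcal{M}_{k-1}$, $\mathcal{F}_{k-1}$, and $\|e_k\|^2$ with coefficients calibrated by $\Theta_1$ and $\Theta_2$, and prove the augmented potential is monotone. The obstacle is twofold. First, the geometric decay factors $(1-\rho_M), (1-\rho_B), (1-\rho_F)$ must be reconciled with the polynomial growth $w_k/w_{k-1} = (k+\nu+4)^2/(k+\nu+3)^2$ of the weights; demanding $w_k(1-\rho) \le w_{k-1}(1-\rho/2)$ for all $k\ge0$ (the ratio is largest at $k=0$) is exactly the source of the requirement $\nu \ge \max\{0, (2-6\rho)/\rho\}$, and it guarantees each geometric tail sums against the weights without blowing up. Second, the positive source terms $\tfrac1n\sum_i\|\nabla f_i(x_{k+1}) - \nabla f_i(x_k)\|^2$ must be dominated; relating them through $L$-smoothness and the coupling to the quadratic reservoir and to the telescoped distances, this can be arranged only when the step size is small enough, which is the content of $c \ge \max\{\cdots, 16\Theta_1\Theta_2\}$ --- a large $c$ shrinks both the prefactor $\tfrac{1}{2(c-1)L}$ on the MSE and the effective momentum, letting the negative potential increments swallow the $\Theta_1\Theta_2$-weighted bias/MSE accumulation and forcing $\sum_k w_k E_k \le 0$. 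Here $\Theta_1 = 1 + 8(1-\rho_B)/(\rho_B^2\rho_M)$ governs the coupling between bias decay and MSE decay, while $\Theta_2 = (M_1\rho_F + 2M_2)/(\rho_M\rho_F)$ measures the magnitude of the source relative to its decay rates. Once the augmented potential is shown to be non-increasing, the conclusion follows exactly as in the telescoping step above, and the stated lower bound on $c$ (the other branch of the max) is what keeps the associated quadratic in the weight recursion from changing sign.
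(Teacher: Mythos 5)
Your skeleton (a per-iteration coupling inequality, weights $w_k=(k+\nu+4)^2$ with the identities $w_k\tau_k/(2\gamma_k)=2cL$ and $w_k(1-\tau_k)=w_{k-1}-1$, and the weight-ratio condition as the source of the constraint on $\nu$) matches the paper's argument, which runs through Lemma \ref{lem:main} and telescopes via $cL\gamma_k^2-\gamma_k=cL\gamma_{k-1}^2-\tfrac{1}{4cL}$. But there is a genuine gap at the crux you flag yourself: the claim $\sum_k w_k E_k\le 0$. In your master inequality you have already destroyed the two negative quantities that the paper uses to establish exactly this. First, you spend the \emph{entire} quadratic reservoir $-\tfrac{(c-1)L\tau_k^2}{2}\|z_{k+1}-z_k\|^2$ on the single cross term $\tau_k\langle e_{k+1},z_{k+1}-z_k\rangle$; the paper's Lemma \ref{lem:main} deliberately pays only $\tfrac{1}{4\tau_k^2}\|x_{k+1}-y_{k+1}\|^2$ out of the available $\tfrac{1}{2\tau_k^2}$ there, retaining the surplus $\tfrac{\gamma_k}{\tau_k}\bigl(\tfrac{L}{2}-\tfrac{1}{4\tau_k\gamma_k}\bigr)\|x_{k+1}-y_{k+1}\|^2<0$ for later. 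Second, in your step (iii) you invoke plain convexity of $f$ and thereby discard the Bregman term $-\tfrac{\gamma_k(1-\tau_k)}{\tau_k}D_f(y_k,x_{k+1})$, which the paper keeps. These two retained negatives are precisely what absorb the MSEB source: Lemmas \ref{lem:biasbound} and \ref{lem:msebound} convert the accumulated weighted bias and MSE into multiples of $D_f(y_k,x_{k+1})$ and $\|x_{k+1}-y_{k+1}\|^2$ (via Lemma \ref{lem:varcan} applied to $\tfrac1n\sum_i\|\nabla f_i(x_{k+1})-\nabla f_i(x_k)\|^2$), and the constraints on $c$ are exactly the statement that the retained negative coefficients dominate. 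In your version nothing negative survives at the per-iteration level except the telescoping $\mathcal{Z}_k-\mathcal{Z}_{k+1}$ and the stray $-\sum_k\mathcal{D}_k$, and neither controls $\tfrac1n\sum_i\|\nabla f_i(x_{k+1})-\nabla f_i(x_k)\|^2$ or $\|e_k\|^2$: the objective gap at $y_k$ does not bound gradient differences between consecutive extrapolated points, and the telescoped distances leave no per-iteration slack. So the assertion that a large $c$ ``lets the negative potential increments swallow'' the error has no mechanism behind it as written.

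A second, related problem is your treatment of the bias. Splitting $\tau_k(1-\rho_B)\langle\nabla f(x_k)-\widetilde{\nabla}_k,z_k-x^*\rangle$ by Young into an $\|e_k\|^2$ piece and ``a $\mathcal{Z}_k$ piece'' cannot work here: without strong convexity, $\|z_k-x^*\|^2$ does not contract, and the weighted telescoping of the distance terms is exact (the weight $w_k\tau_k/(2\gamma_k)=2cL$ is constant), so an added $+\epsilon\sum_k w_k\|z_k-x^*\|^2$ cannot be reabsorbed. The paper instead decomposes $z_k-x^*=(z_k-z_{k-1})+(z_{k-1}-x^*)$ and unrolls the bias recursion all the way to $\widetilde{\nabla}_1=\nabla f(x_1)$, Young-splitting only against the increments $\|z_\ell-z_{\ell-1}\|^2$, which are again paid for by the retained quadratic reservoir. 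To repair your proposal you would need to (a) keep the Bregman term and at least half of the $\|z_{k+1}-z_k\|^2$ reservoir in the master inequality, and (b) route the bias through increments rather than through $\|z_k-x^*\|^2$; with those changes your argument collapses back onto the paper's.
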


\noindent A similar result gives an accelerated linear convergence rate when strong convexity is present.

\begin{theorem}[\textbf{Acceleration With Strong Convexity}]
\label{thm:main2}

\noindent Suppose the stochastic gradient estimator $\widetilde{\nabla}_{k+1}$ satisfies the \emph{MSEB}$(M_1,M_2,$ $\rho_M,\rho_B,\rho_F)$ property and $g$ is $\mu$-strongly convex with $\mu > 0$. With the constants $\Theta_1, \Theta_2, c$, and $\nu$ set as in Theorem \ref{thm:main1}, set $\gamma = \min \{ \frac{1}{\sqrt{\mu c L}}, \frac{\rho}{2 \mu} \}$ and $\tau = \mu \gamma$. After $T$ iterations, Algorithm \ref{alg:1} produces a point $z_T$ satisfying the following bound:
\begin{equation}
    \mathbb{E} \|z_T - x^*\|^2 \le K_2 \left(1+ \min \left\{ \sqrt{\frac{\mu}{L c}}, \frac{\rho}{2} \right\} \right)^{-T},
\end{equation}
where
\begin{equation}
    K_2 \defeq \frac{2}{\mu} \left(F(y_0) - F(x^*)\right) + \|z_0 - x^*\|^2.
\end{equation}
\end{theorem}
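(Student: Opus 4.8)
The plan is to derive Theorem \ref{thm:main2} from the same master per-iteration inequality that underlies Theorem \ref{thm:main1} (the general bound established in Section \ref{sec:ineq} for an arbitrary estimator), specialised to the constant parameters $\gamma = \min\{(\mu c L)^{-1/2}, \rho/(2\mu)\}$ and $\tau = \mu\gamma$, and then to replace the $\mathcal{O}(1/T^2)$ telescoping of the non-strongly-convex case with a geometric contraction. First I would write down that inequality for a single step of Algorithm \ref{alg:1}: it controls the suboptimality $\mathbb{E} F(y_{k+1}) - F(x^*)$ and the squared distance $\mathbb{E}\|z_{k+1} - x^*\|^2$ in terms of $F(y_k)-F(x^*)$, $\|z_k-x^*\|^2$, the estimator's mean-squared error $\mathbb{E}\|\widetilde{\nabla}_{k+1}-\nabla f(x_{k+1})\|^2$, the bias inner product $\langle \nabla f(x_{k+1}) - \mathbb{E}\widetilde{\nabla}_{k+1}, x_{k+1}-x^*\rangle$, and a negative ``descent'' quantity. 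The essential new ingredient relative to Theorem \ref{thm:main1} is that $\mu$-strong convexity of $g$ makes the prox step contract, producing a factor $(1+\mu\gamma)$ in front of $\|z_{k+1}-x^*\|^2$; since $\tau = \mu\gamma$, this factor equals $(1+\tau)$, which is exactly the per-step gain we want to extract.

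Next I would insert the MSEB$(M_1,M_2,\rho_M,\rho_B,\rho_F)$ bounds. The MSE term is replaced by $\mathcal{M}_k$ and unfolded through the recursion \eqref{eq:mseb}, which introduces the memory terms $(1-\rho_M)\mathcal{M}_{k-1}$ and $\mathcal{F}_k$; the bias inner product is rewritten using the identity $\nabla f(x_{k+1}) - \mathbb{E}\widetilde{\nabla}_{k+1} = (1-\rho_B)(\nabla f(x_k)-\widetilde{\nabla}_k)$ and Young's inequality, which converts it into a multiple of the previous step's squared error (bounded again by $\mathcal{M}_{k-1}$) plus a small multiple of $\|x_{k+1}-x^*\|^2$. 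To close these recursions I would work with an augmented Lyapunov function
\begin{equation*}
\mathcal{L}_k \defeq \frac{2}{\mu}\bigl(F(y_k) - F(x^*)\bigr) + \|z_k - x^*\|^2 + \Phi_k,
\end{equation*}
where $\Phi_k \ge 0$ collects weighted copies of $\mathcal{M}_{k-1}$, $\mathcal{F}_k$, and the running bias, with weights pinned down by $\Theta_1$ and $\Theta_2$ and with $\Phi_0 = 0$ (so that $\mathcal{L}_0 = K_2$). The constants $\Theta_1 = 1 + 8(1-\rho_B)/(\rho_B^2\rho_M)$ and $\Theta_2 = (M_1\rho_F + 2M_2)/(\rho_M\rho_F)$ are precisely the bookkeeping factors that arise when the geometric sums generated by $\rho_B$, $\rho_M$, and $\rho_F$ are collapsed.

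The goal is then the one-step contraction $\mathbb{E}\mathcal{L}_{k+1} \le (1+\theta)^{-1}\mathcal{L}_k$ with $\theta = \mu\gamma = \min\{\sqrt{\mu/(Lc)}, \rho/2\}$, after which unrolling over $T$ steps and using $\|z_T-x^*\|^2 \le \mathcal{L}_T$ yields $\mathbb{E}\|z_T-x^*\|^2 \le (1+\theta)^{-T}\mathcal{L}_0 = K_2(1+\theta)^{-T}$. The two branches of the minimum defining $\theta$ correspond directly to the two branches defining $\gamma$: the cap $\gamma \le (\mu c L)^{-1/2}$ is the largest step for which the acceleration/strong-convexity part contracts at rate $(1+\mu\gamma)^{-1}$, while the cap $\gamma \le \rho/(2\mu)$ forces the MSEB memory terms in $\Phi_k$ to decay at least as fast as this main contraction, so that $\Phi_k$ does not spoil the geometric rate.

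The main obstacle is verifying the one-step contraction, i.e., choosing the weights in $\Phi_k$ so that everything closes. The difficulty is twofold. First, the MSEB bounds generate positive gradient-difference terms $\tfrac{1}{n}\sum_i \mathbb{E}\|\nabla f_i(x_{k+1}) - \nabla f_i(x_k)\|^2$; these must be absorbed into the negative descent quantity supplied by the master inequality, which is where the requirement $c \ge 16\Theta_1\Theta_2$ (forcing $\gamma$ small enough that one step barely moves the iterates in gradient space) is used, together with $L$-smoothness to pass from $x$-differences to gradient-differences. Second, the coefficients must be compatible simultaneously with the bias recursion (decay $\rho_B$), the MSE recursion (decay $\rho_M$), and the $\mathcal{F}$-recursion (decay $\rho_F$); this is the algebraic heart of the argument and is exactly what dictates the form of $\Theta_1$, $\Theta_2$, and the lower bound on $c$. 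Once the contraction is established, the remaining steps—unrolling the geometric recursion and identifying $\mathcal{L}_0$ with $K_2$—are routine.
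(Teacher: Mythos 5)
Your proposal is sound and reaches the right conclusion, but it is organised differently from the paper's proof. The paper does not construct an augmented Lyapunov function or prove a per-step contraction. Instead, it works with the bare quantity $V_k = \tfrac{\gamma(1-\tau)}{\tau}(F(y_k)-F(x^*)) + \tfrac{1}{2}\|z_k-x^*\|^2$, uses $\tfrac{1}{1-\tau}\ge 1+\mu\gamma$ to write the one-iteration inequality of Lemma \ref{lem:main} as $(1+\mu\gamma)V_{k+1}\le V_k + E_k$ (with $E_k$ collecting the MSE, bias, and descent terms), multiplies by $(1+\mu\gamma)^k$, telescopes over all $T$ iterations, and only then shows that the accumulated weighted error $\sum_k(1+\mu\gamma)^kE_k$ is non-positive by invoking Lemmas \ref{lem:biasbound} and \ref{lem:msebound} with $\sigma_k=(1+\mu\gamma)^k$, $s_k=\gamma$ (admissible precisely because $\mu\gamma\le\rho/2$) together with the non-positivity computations of Appendix \ref{app:nonpos}. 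Your route internalises the history-dependent quantities $\mathcal{M}_{k-1}$, $\mathcal{F}_k$, and the unrolled bias into a potential $\Phi_k$ and demands $\mathbb{E}\mathcal{L}_{k+1}\le(1+\mu\gamma)^{-1}\mathcal{L}_k$ at every step; this is a legitimate and arguably more transparent formulation, but it forces you to carry inside $\Phi_k$ a geometrically weighted sum over \emph{all} past gradient differences (since $\mathcal{F}_k$ is non-Markovian) and to pre-pay each new increment $\tfrac{1}{n}\sum_i\|\nabla f_i(x_{k+1})-\nabla f_i(x_k)\|^2$ against the current step's descent term, whereas the paper's global-sum device (Lemma \ref{lem:tech}) reassigns these contributions across iterations after the fact and avoids specifying the weights in $\Phi_k$ explicitly. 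Two minor points to tidy up if you execute your plan: the natural per-step coefficient on the function gap is $\tfrac{2\gamma(1-\tau)}{\tau}=\tfrac{2(1-\tau)}{\mu}$ rather than $\tfrac{2}{\mu}$, so you should run the contraction with that coefficient and only bound $\mathcal{L}_0\le K_2$ at the end; and the bias inner product, being signed, should enter $\Phi_k$ only after Young's inequality converts it into non-negative multiples of $\mathcal{M}_{k-1}$ and $\|z_k-z_{k-1}\|^2$, exactly as in Lemma \ref{lem:biasbound}, so that $\Phi_k\ge 0$ is preserved.
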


\begin{remark}
    Although we prove accelerated convergence rates for many popular gradient estimators, the generality of Theorems \ref{thm:main1} and \ref{thm:main2} allows our results to extend easily to gradient estimators not considered in this work as well. These include, for example, the gradient estimators considered in \cite{neighbors}.
\end{remark}

\begin{remark}
With some manipulation, we see that these rates with $c = \nu = \mathcal{O}(n)$ are similar to the rates proved for Katyusha. In \cite{katyusha}, the author shows that in the non-strongly convex case, Katyusha satisfies
\begin{equation}
    \mathbb{E} F(\widetilde{x}_S) - F(x^*) \le \mathcal{O} \left( \frac{F(x_0) - F(x^*)}{S^2} + \frac{L \|x_0 - x^*\|^2}{P S^2} \right).
\end{equation}
Recall that Katyusha follows the algorithmic framework of SVRG; $S$ denotes the epoch number, $\widetilde{x}_S$ the point where the full gradient was computed at the beginning of epoch $S$, and $P = \mathcal{O}(n)$ is the epoch length. In our notation, $S = T/P = \mathcal{O}\left( T/n \right)$. Theorem \ref{thm:main1} with $c = \nu = \mathcal{O}(n)$ shows that Algorithm \ref{alg:1} achieves a similar convergence rate of
\begin{equation}
    \mathbb{E} F(y_T) - F(x^*) \le \mathcal{O} \left( \frac{n^2}{T^2} \left( F(y_0) - F(x^*) + \frac{L}{n} \|z_0 - x^*\|^2 \right) \right).
\end{equation}
In the strongly convex case, an appropriately adapted version of Katyusha satisfies
\begin{equation}
    \mathbb{E} F(\widetilde{x}_S) - F(x^*) \le \begin{cases}
    \mathcal{O} \left( \left( 1 + \frac{\sqrt{\mu}}{\sqrt{L P}} \right)^{-S P} \right) & \frac{4}{3} \le \frac{\sqrt{L}}{\sqrt{\mu n}} \\
    \mathcal{O} \left( \left( \frac{3}{2} \right)^{-S} \right) & \frac{\sqrt{L}}{\sqrt{\mu n}} < \frac{4}{3}.
    \end{cases}
\end{equation}
Similarly, with $c = \rho = \mathcal{O}( n )$, Theorem \ref{thm:main2} shows that the iterates of Algorithm \ref{alg:1} satisfy
\begin{align}
    \frac{1}{2} \mathbb{E} \|z_T - x^*\|^2 \le \mathcal{O} \left( \left(1+ \min \left\{ \sqrt{\frac{\mu}{L n}}, \frac{1}{n} \right\} \right)^{-T} \right),
\end{align}
which again matches the rate of Katyusha. Of course, not all stochastic gradient estimators satisfy the bounds necessary to set $c = \nu = \rho = \mathcal{O}( n )$, so these optimal rates are conditional on being able to construct an ``optimal estimator''. SAGA, SVRG, SARAH, and SARGE all require $c$ to be slightly larger than $\mathcal{O}(n)$.
\end{remark}

The proofs of Theorems \ref{thm:main1} and \ref{thm:main2} use a linear coupling argument adapted from \cite{lincoup}, but we use a different adaptation than the one in \cite{katyusha} used to prove convergence rates for Katyusha. To explain the differences between our approach and existing approaches, let us give a high-level description of linear coupling and the generalisation used in \cite{katyusha}.

In \cite{lincoup}, the authors suggest that gradient descent and mirror descent can be coupled to create an accelerated algorithm. We do not discuss gradient descent and mirror descent in detail (for this, see \cite{lincoup}), but the main idea of linear coupling can be understood from only two bounds arising from these algorithms. For the purpose of this argument, suppose $g \equiv 0$, so that $F \equiv f$. Gradient descent with step size $\eta$ satisfies the following bound on the decrease of the objective (equation (2.1) in \cite{lincoup}):
\begin{equation}
\label{eq:gradineq}
    f(x_{k+1}) \le f(x_k) - \frac{1}{\eta} \|\nabla f(x_{k+1})\|^2.
\end{equation}
This bound shows that gradient descent is indeed a \textit{descent method}; it is guaranteed to make progress at each iteration. The iterates of mirror descent using step size $\gamma$ satisfy a bound on the sub-optimality of each iterate (equation (2.2) in \cite{lincoup}).
\begin{equation}
\label{eq:mirrorineq}
    \langle \nabla f(x_k), x_k - x^* \rangle \le \frac{1}{2} \|x_k - x^*\|^2 - \frac{1}{2} \|x_{k+1} - x^*\|^2 + \frac{\gamma^2}{2} \|\nabla f(x_k)\|^2.
\end{equation}
While gradient descent is guaranteed to make progress proportional to $\|\nabla f(x_k)\|^2$ each iteration, mirror descent potentially introduces an ``error'' that is proportional to $\|\nabla f(x_k)\|^2$. Linear coupling takes advantage of this duality. Loosely speaking, by combining the sequence of iterates produced by gradient descent 
with the sequence produced by mirror descent, the guaranteed progress of gradient descent balances the potential error introduced by mirror descent, accelerating convergence.

This argument does not immediately hold for stochastic gradient methods. This is because in addition to the norm $\|\nabla f(x_k)\|^2$ arising in inequalities \eqref{eq:gradineq} and \eqref{eq:mirrorineq}, we also get the MSE of our gradient estimator $\|\widetilde{\nabla}_k - \nabla f(x_k)\|^2$ as well as a ``bias term''. In the stochastic setting, analogues of inequalities \eqref{eq:gradineq} and \eqref{eq:mirrorineq} read
\begin{align}
\label{eq:grad2}
     f(x_{k+1}) & \le f(x_k) + \langle \nabla f(x_{k+1}),  x_{k+1} - x_k \rangle \\
     & = f(x_k) - \frac{1}{\eta} \|\widetilde{\nabla}_{k+1}\|^2 + \langle \nabla f(x_{k+1}) - \widetilde{\nabla}_{k+1}, x_{k+1} - x_k \rangle \\
     & \le f(x_k) + \left(\frac{\epsilon}{2 \eta^2} - \frac{1}{\eta} \right) \|\widetilde{\nabla}_{k+1}\|^2 + \frac{1}{2 \epsilon} \| \widetilde{\nabla}_{k+1} - \nabla f(x_{k+1})\|^2,
\end{align}
where the last inequality is Young's, and
\begin{align}
\label{eq:mirror2}
    \gamma ( f(x_k) - f(x^*) ) \le & \frac{1}{2} \|x_k - x^*\|^2 - \frac{1}{2} \|x_{k+1} - x^*\|^2 + \gamma \left\langle \nabla f(x_k) - \widetilde{\nabla}_k, x_k - x^* \right\rangle + \frac{\gamma^2}{2} \|\widetilde{\nabla}_k\|^2.
\end{align}
If the MSE or bias term is too large, the gradient step is no longer a descent step, and the progress does not balance the ``error terms'' in each of these inequalities, so we cannot expect linear coupling to offer any acceleration. This problem with the MSE and bias term exists for non-accelerated algorithms as well, and all analyses of stochastic gradient methods bound the effect of these terms, but in different ways. Katyusha and other accelerated algorithms in this family incorporate negative momentum to cancel part of the MSE. In contrast, analyses of non-accelerated algorithms do not try to cancel any of the variance, but show that the variance decreases fast enough so that it does not affect convergence rates.

\section{Related Work}
\label{sec:prior}

Besides Katyusha, there are many algorithms that use negative momentum for acceleration. In \cite{FSVRG}, the authors consider an accelerated version of SVRG that combines negative momentum with Nesterov's momentum to achieve the optimal $\sqrt{\kappa}$ dependence in the strongly convex case. This approach to acceleration is almost the same as Katyusha, but uses a traditional form of Nesterov's momentum instead of linear coupling. MiG \cite{MiG} is another variant of these algorithms, corresponding to Katyusha with a certain parameter set to zero. VARAG is another approach to accelerated SVRG using negative momentum. VARAG achieves optimal convergence rates in the non-strongly convex and strongly convex settings under the framework of a single algorithm, and it converges linearly on problems that admit a global error bound, a quality that other algorithms have not yet been shown to possess \cite{varag}. 

The only direct acceleration of a SAGA-like algorithm is SSNM from \cite{subsamp}. Using the notation of \eqref{eq:saga}, SSNM chooses a point from the set $\{\varphi_k^i\}_{i=1}^n$ uniformly at random, and uses this point as the ``anchor point'' for negative-momentum acceleration. Although SSNM admits fast convergence rates, there are a few undesirable qualities of this approach. SAGA has heavy storage requirements because it must store $n$ gradients from previous iterations, and SSNM exacerbates this storage problem by storing $n$ points from previous iterations as well. SSNM must also compute two stochastic gradients each iteration, so its per-iteration computational cost is similar to SVRG and Katyusha, and always higher than SAGA's.

Many algorithms for non-convex optimisation also use negative momentum for acceleration. KatyushaX \cite{katyushaX} is a version of Katyusha adapted to optimise sum-of-non-convex objectives. To achieve its acceleration, KatyushaX uses classical momentum and a ``retraction step'', which is effectively an application of negative momentum (this relationship is acknowledged in \cite{katyushaX} as well). Natasha \cite{Natasha} and Natasha2 \cite{Natasha2} are accelerated algorithms for finding stationary points of non-convex objectives. Both algorithms employ a ``retraction step'' that is similar to negative momentum \cite{Natasha}.

There are also many accelerated stochastic gradient algorithms that do not use negative momentum. In \cite{Nitanda}, the author applies Nesterov's momentum to SVRG without any sort of negative momentum, proving a linear convergence rate in the strongly convex regime. However, the proven convergence rate is suboptimal, as it implies even worse performance than SVRG when the batch size is small and worse performance than accelerated full-gradient methods when the batch size is close to $n$. Our results show that a particular application of Nesterov's momentum to SVRG does provide acceleration. 

Point-SAGA \cite{pointSAGA} is another SAGA-like algorithm that achieves optimal convergence rates, but point-SAGA must compute the proximal operator corresponding to $F$ rather than the proximal operator corresponding to $g$. This is not possible in general, even if the proximal operator corresponding to $g$ is easy to compute, so point-SAGA applies to a different class of functions than the class we consider in this work.

There are also many algorithms that indirectly accelerate stochastic gradient methods. This class of algorithms include Catalyst \cite{catalyst}, APPA \cite{APPA}, and the primal-dual methods in \cite{ZhangXiaoPrimalDual}. These algorithms call a variance-reduced stochastic gradient method as a subroutine, and provide acceleration using an inner-outer loop structure. These algorithms are often difficult to implement in practice due to the difficulty of solving their inner-loop subproblems, and they achieve a convergence rate that is only optimal up to a logarithmic factor.

\section{Preliminaries}
\label{sec:pre}

In this section, we present some basic definitions and results from optimisation and convex analysis. Much of our analysis involves \emph{Bregman divergences}. The Bregman divergence associated with a function $h$ is defined as
\begin{equation}
    D^\xi_h(y,x) \defeq h(y) - h(x) + \langle \xi, x - y \rangle,
\end{equation}
where $\xi \in \partial h(x)$ and $\partial$ is the subdifferential operator. If $h$ is differentiable, we drop the superscript $\xi$ as the subgradient is unique. The function $h$ is convex if and only if $D^\xi_h(y,x) \ge 0$ for all $x$ and $y$. We say $h$ is $\mu$-\emph{strongly convex} with $\mu 
\ge 0$ if and only if
\begin{equation}
\frac{\mu}{2} \|x - y\|^2 \le D^\xi_h(y,x).
\end{equation}
Bregman divergences also arise in the following fundamental inequality.

\begin{lemma}[\cite{nest2004}, Thm. 2.1.5]
\label{lem:2L}
Suppose $f$ is convex with an $L$-Lipschitz continuous gradient. We have for all $x,y \in \R^m$,
\begin{equation}
\|\nabla f(x) - \nabla f(y)\|^2 \le 2 L D_f(y,x).
\end{equation}
\end{lemma}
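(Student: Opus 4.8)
The plan is to reduce the statement to the standard quadratic upper bound (the ``descent lemma'') for $L$-smooth convex functions, applied to a cleverly shifted function. First I would fix $x$ and introduce the auxiliary function $\phi(z) \defeq f(z) - \langle \nabla f(x), z \rangle$. Since $f$ is convex and we have only subtracted a linear term, $\phi$ is convex with the same $L$-Lipschitz gradient $\nabla \phi(z) = \nabla f(z) - \nabla f(x)$. The key observation is that $\nabla \phi(x) = 0$, so by convexity $x$ is a global minimiser of $\phi$; in particular $\phi(x) \le \phi(w)$ for every $w \in \R^m$.

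Next I would invoke the descent lemma, which follows from $L$-Lipschitz continuity of the gradient: for any $w$,
\[
\phi\!\left(w - \tfrac{1}{L}\nabla\phi(w)\right) \le \phi(w) - \tfrac{1}{2L}\|\nabla\phi(w)\|^2.
\]
Applying this at $w = y$ and combining with the lower bound $\phi(x) \le \phi\!\left(y - \tfrac1L\nabla\phi(y)\right)$ yields
\[
\phi(x) \le \phi(y) - \frac{1}{2L}\|\nabla\phi(y)\|^2.
\]
Finally I would rearrange and unwind the definitions. The gradient term is $\|\nabla\phi(y)\|^2 = \|\nabla f(y) - \nabla f(x)\|^2$, and a direct computation gives $\phi(y) - \phi(x) = f(y) - f(x) - \langle \nabla f(x), y - x\rangle = D_f(y,x)$. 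Substituting produces $\tfrac{1}{2L}\|\nabla f(x) - \nabla f(y)\|^2 \le D_f(y,x)$, which is the claim after multiplying by $2L$.

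The only real obstacle is establishing the descent lemma cleanly; everything else is bookkeeping. I would derive it by integrating $\nabla\phi$ along the segment from $w$ to any point $a$ and using $\|\nabla\phi(a) - \nabla\phi(w)\| \le L\|a - w\|$, which produces the quadratic upper bound $\phi(a) \le \phi(w) + \langle \nabla\phi(w), a - w\rangle + \tfrac{L}{2}\|a - w\|^2$; minimising the right-hand side over $a$ (equivalently, taking the step $a = w - \tfrac1L\nabla\phi(w)$) gives the stated inequality. An alternative, fully symmetric route avoids the auxiliary function: apply the quadratic bound twice, once expanding around $x$ and once around $y$, add the two resulting Bregman inequalities, and simplify to obtain the co-coercivity inequality $\langle \nabla f(x) - \nabla f(y), x - y\rangle \ge \tfrac1L\|\nabla f(x)-\nabla f(y)\|^2$, from which the stated one-sided bound follows. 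The shifted-function argument is shorter and more direct, so I would present that one.
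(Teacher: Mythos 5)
Your proof is correct: the shifted-function argument via $\phi(z) = f(z) - \langle \nabla f(x), z\rangle$, the observation that $x$ globally minimises $\phi$, and one application of the descent lemma is precisely the standard proof of this inequality, and it is essentially the argument given in the cited source (Nesterov, Thm.~2.1.5). The paper itself offers no proof of Lemma~\ref{lem:2L} beyond the citation, so there is nothing to compare against internally; your bookkeeping ($\phi(y)-\phi(x) = D_f(y,x)$ and $\nabla\phi(y) = \nabla f(y) - \nabla f(x)$) checks out.
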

Lemma \ref{lem:2L} is equivalent to the following result, which is more specific to our analysis due to the finite-sum structure of the smooth term in \eqref{eq:optmain}.

\begin{lemma}
\label{lem:varcan}
Let $f(x) = \tfrac{1}{n} \sum_{i=1}^n f_i(x)$, where each $f_i$ is convex with an $L$-Lipschitz continuous gradient. Then for every $x, y \in \R^m$
\begin{equation}
\frac{1}{n} \sum_{i=1}^n \|\nabla f_i(x) - \nabla f_i(y)\|^2 \le 2 L D_f(y,x).
\end{equation}
\end{lemma}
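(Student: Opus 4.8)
The plan is to reduce the finite-sum statement to the single-function inequality of Lemma \ref{lem:2L} applied termwise, and then to exploit the linearity of the Bregman divergence in its function argument. First I would apply Lemma \ref{lem:2L} to each summand $f_i$, which is legitimate since every $f_i$ is convex with $L$-Lipschitz continuous gradient by assumption; this gives $\|\nabla f_i(x) - \nabla f_i(y)\|^2 \le 2L\, D_{f_i}(y,x)$ for each $i$. Averaging these $n$ inequalities yields $\frac{1}{n}\sum_{i=1}^n \|\nabla f_i(x) - \nabla f_i(y)\|^2 \le 2L \cdot \frac{1}{n}\sum_{i=1}^n D_{f_i}(y,x)$, so it only remains to evaluate the averaged divergence on the right.

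The remaining step is to identify the averaged Bregman divergence with the Bregman divergence of $f$ itself. Writing out $D_{f_i}(y,x) = f_i(y) - f_i(x) - \langle \nabla f_i(x), y - x\rangle$ and using $f = \frac{1}{n}\sum_i f_i$ together with the consequent identity $\nabla f(x) = \frac{1}{n}\sum_i \nabla f_i(x)$, the average collapses term by term into $f(y) - f(x) - \langle \nabla f(x), y - x\rangle = D_f(y,x)$. Substituting this equality back into the averaged inequality gives the claimed bound directly.

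The only subtlety, and it is a mild one, is the observation that the Bregman divergence is an affine-linear functional of its generating function, in the sense that $D_{\frac{1}{n}\sum_i f_i} = \frac{1}{n}\sum_i D_{f_i}$. Because the divergence is built from $f$, its gradient, and a bilinear pairing, each of which is linear in $f$, this additivity is immediate and presents no genuine obstacle. I would remark that this linearity is precisely the sense in which the lemma is \emph{equivalent} to Lemma \ref{lem:2L}: the finite-sum form is nothing more than the termwise application of the single-function bound, repackaged through the additivity of $D$ over the components $f_i$.
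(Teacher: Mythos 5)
Your proof is correct and follows exactly the paper's route: the paper's own proof is the one-line remark that the lemma ``follows from applying Lemma \ref{lem:2L} to each component $f_i$,'' and your argument simply spells out the averaging step and the (implicit) linearity $\frac{1}{n}\sum_i D_{f_i}(y,x) = D_f(y,x)$ that makes this work.
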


\begin{proof}
    This follows from applying Lemma \ref{lem:2L} to each component $f_i$.
\end{proof}

The \emph{proximal operator} is defined as
\begin{equation}
\prox_{g} (y) = \argmin_{x \in \R^m} \left\{ \frac{1}{2} \|x-y\|^2 + g(x) \right\}.
\end{equation}
The proximal operator is also defined implicitly as $y - \prox_g(y) \in \partial g(\prox_g$ $(y))$. From this definition of the proximal operator, the following standard inequality is clear.

\begin{lemma}
\label{lem:strongcon}
Suppose $g$ is $\mu$-strongly convex with $\mu \ge 0$, and suppose $z = \prox_{\eta g} \left( x - \eta d \right)$ for some $x, d \in \R^m$ and constant $\eta$. Then, for any $y \in \R^m$,
\begin{equation}
\eta \langle d, z - y \rangle \le \frac{1}{2} \|x - y\|^2 - \frac{1 + \mu \eta}{2} \|z - y\|^2 - \frac{1}{2} \|z - x\|^2 - \eta g(z) + \eta g(y).
\end{equation}
\end{lemma}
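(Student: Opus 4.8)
The plan is to start from the implicit characterisation of the proximal operator. Writing $w \defeq x - \eta d$, the hypothesis $z = \prox_{\eta g}(w)$ together with the relation $w - \prox_{\eta g}(w) \in \eta \partial g(\prox_{\eta g}(w))$ (the scaled version of the implicit definition recalled just before the lemma) shows that there exists a subgradient $\xi \in \partial g(z)$ with
\begin{equation}
    (x - \eta d) - z = \eta \xi, \quad \textnormal{equivalently} \quad \xi = \frac{x - z}{\eta} - d.
\end{equation}
This single identity is the entire content extracted from the proximal step; everything afterwards is convex-analytic bookkeeping.

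Next I would invoke the $\mu$-strong convexity of $g$ through the Bregman divergence inequality $\tfrac{\mu}{2}\|y - z\|^2 \le D^\xi_g(y,z)$ stated in the Preliminaries, where $D^\xi_g(y,z) = g(y) - g(z) + \langle \xi, z - y\rangle$ with the same $\xi \in \partial g(z)$ as above. Substituting the expression for $\xi$ splits the inner product as $\langle \xi, z - y\rangle = \tfrac{1}{\eta}\langle x - z, z - y\rangle - \langle d, z - y\rangle$, so after isolating the $\langle d, z - y\rangle$ term and multiplying through by $\eta$ I obtain
\begin{equation}
    \eta \langle d, z - y\rangle \le \eta g(y) - \eta g(z) + \langle x - z, z - y\rangle - \frac{\mu \eta}{2}\|z - y\|^2.
\end{equation}

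The final step is to rewrite the cross term $\langle x - z, z - y\rangle$ using the polarisation identity $\langle x - z, z - y\rangle = \tfrac{1}{2}\|x - y\|^2 - \tfrac{1}{2}\|z - x\|^2 - \tfrac{1}{2}\|z - y\|^2$, which follows from expanding $\|x - y\|^2 = \|(x - z) + (z - y)\|^2$. Substituting this and merging the two $\|z - y\|^2$ contributions into the single coefficient $-\tfrac{1 + \mu\eta}{2}$ yields exactly the claimed inequality. I do not anticipate a genuine obstacle here, as the argument is a routine computation; the only point requiring care is the bookkeeping of the strong-convexity constant, namely ensuring that the $\mu/2$ factor from the divergence, once scaled by $\eta$, combines with the $1/2$ coming from the polarisation identity to produce the sharp $(1 + \mu\eta)/2$ coefficient rather than a looser bound. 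Keeping track of the scaling $\eta g$ versus $g$ throughout is what makes the constant come out tight, and it is also worth noting that the argument is uniform in $\mu \ge 0$, so the $\mu = 0$ case is recovered automatically.
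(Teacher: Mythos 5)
Your proof is correct and follows essentially the same route as the paper's: extract the subgradient $\xi = \frac{x-z}{\eta} - d \in \partial g(z)$ from the proximal optimality condition, apply the $\mu$-strong convexity of $g$ at $z$, and expand the cross term $\langle x - z, z - y\rangle$ via the three-point identity before multiplying through by $\eta$. (Your sign for the subgradient is in fact the correct one; the paper's proof writes $\frac{1}{\eta}(z-x) + d \in \partial g(z)$, which is a sign typo that its subsequent algebra implicitly corrects.)
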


\begin{proof}
    By the strong convexity of $g$,
    \begin{equation}
        g(z) - g(y) \le \langle \xi, z - y \rangle - \frac{\mu}{2} \|z - y\|^2 \quad \forall \xi \in \partial g(z).
    \end{equation}
    From the implicit definition of the proximal operator, we know that $\frac{1}{\eta} (z - x) + d \in \partial g(z)$. Therefore,
    \begin{align}
        g(z) - g(y) &\le \langle \xi, z - y \rangle - \frac{\mu}{2} \|z - y\|^2 \\
        & = \frac{1}{\eta} \langle z - x + \eta d, z - y \rangle  - \frac{\mu}{2} \|z - y\|^2 \\
        & = \langle d, z - y \rangle + \frac{1}{2 \eta} \|x - y\|^2 - \frac{1 + \mu \eta}{2 \eta} \|z - y\|^2 - \frac{1}{2 \eta} \|z - x\|^2.
    \end{align}
    Multiplying by $\eta$ and rearranging yields the assertion.
\end{proof}

\section{The Acceleration Framework}
\label{sec:ineq}

To apply the linear coupling framework, we must couple stochastic analogues of 
\eqref{eq:gradineq} and \eqref{eq:mirrorineq} to construct a lower bound on the one-iteration progress of Algorithm \ref{alg:1}.

\begin{lemma}[\textbf{One-Iteration Progress}]
\label{lem:main}
The following bound describes the progress made by one iteration of Algorithm \ref{alg:1}.
\begin{align}
\label{eq:main}
0 \le & \frac{\gamma_k(1-\tau_k)}{\tau_k} F(y_k)
- \frac{\gamma_k}{\tau_k} F(y_{k+1}) + \gamma_k F(x^*) + \gamma_k^2  \|\widetilde{\nabla}_{k+1} - \nabla f(x_{k+1})\|^2 \notag \\
& + \frac{\gamma_k}{\tau_k} \left( \frac{L}{2} - \frac{1}{4 \tau_k \gamma_k} \right) \|x_{k+1} - y_{k+1}\|^2 + \frac{1}{2} \|z_k - x^*\|^2 \notag \\
& - \frac{1 + \mu \gamma_k}{2} \|z_{k+1} - x^*\|^2 + \gamma_k \left\langle \nabla f (x_{k+1}) - \widetilde{\nabla}_{k+1}, z_k - x^* \right\rangle \notag \\
& - \frac{\gamma_k(1-\tau_k)}{\tau_k} D_f(y_k,x_{k+1}).
\end{align}
\end{lemma}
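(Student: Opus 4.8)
The plan is to run the standard linear-coupling one-step argument pathwise (the statement carries no expectation, so I treat $\widetilde{\nabla}_{k+1}$ as a fixed vector) by coupling a ``mirror descent'' bound on the $z$-step with a ``gradient descent'' bound on the $y$-step. Writing the error as $e_{k+1} \defeq \widetilde{\nabla}_{k+1} - \nabla f(x_{k+1})$, the three structural facts I will lean on are: (i) the prox inequality of Lemma~\ref{lem:strongcon}; (ii) the descent lemma $f(y_{k+1}) \le f(x_{k+1}) + \langle \nabla f(x_{k+1}), y_{k+1} - x_{k+1}\rangle + \tfrac{L}{2}\|y_{k+1}-x_{k+1}\|^2$ from $L$-smoothness; and (iii) the two algebraic identities $y_{k+1} - x_{k+1} = \tau_k(z_{k+1} - z_k)$ and $\tau_k(z_k - x^*) = (1-\tau_k)(x_{k+1}-y_k) + \tau_k(x_{k+1}-x^*)$, both immediate from $x_{k+1} = \tau_k z_k + (1-\tau_k)y_k$ and $y_{k+1} = \tau_k z_{k+1} + (1-\tau_k)y_k$.

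First I would apply Lemma~\ref{lem:strongcon} to $z_{k+1} = \prox_{\gamma_k g}(z_k - \gamma_k \widetilde{\nabla}_{k+1})$ with $d = \widetilde{\nabla}_{k+1}$, $x = z_k$, $\eta = \gamma_k$, and test point $y = x^*$, bounding $\gamma_k \langle \widetilde{\nabla}_{k+1}, z_{k+1} - x^*\rangle$ above by $\tfrac12\|z_k - x^*\|^2 - \tfrac{1+\mu\gamma_k}{2}\|z_{k+1}-x^*\|^2 - \tfrac12\|z_{k+1}-z_k\|^2 - \gamma_k g(z_{k+1}) + \gamma_k g(x^*)$. I then split the left-hand inner product using both $\widetilde{\nabla}_{k+1} = \nabla f(x_{k+1}) + e_{k+1}$ and $z_{k+1}-x^* = (z_{k+1}-z_k) + (z_k - x^*)$ into four pieces. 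The piece $\gamma_k\langle \nabla f(x_{k+1}), z_{k+1}-z_k\rangle$ I lower-bound via the descent lemma (ii) together with the identity $y_{k+1}-x_{k+1}=\tau_k(z_{k+1}-z_k)$, which produces $\tfrac{\gamma_k}{\tau_k}(f(y_{k+1}) - f(x_{k+1}))$ and a $\|x_{k+1}-y_{k+1}\|^2$ term. The piece $\gamma_k\langle \nabla f(x_{k+1}), z_k - x^*\rangle$ I lower-bound using the second identity and convexity of $f$, writing $\langle \nabla f(x_{k+1}), x_{k+1}-y_k\rangle = f(x_{k+1}) - f(y_k) + D_f(y_k, x_{k+1})$ and $\langle\nabla f(x_{k+1}), x_{k+1}-x^*\rangle \ge f(x_{k+1}) - f(x^*)$ (discarding $D_f(x^*,x_{k+1}) \ge 0$); this is where the term $-\tfrac{\gamma_k(1-\tau_k)}{\tau_k}D_f(y_k,x_{k+1})$ is born.

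The two remaining pieces are the error terms. The inner product $\gamma_k\langle e_{k+1}, z_k - x^*\rangle$ passes through unchanged and, once moved to the correct side, becomes precisely the stated bias term $\gamma_k\langle \nabla f(x_{k+1}) - \widetilde{\nabla}_{k+1}, z_k - x^*\rangle$. The cross term $\gamma_k\langle e_{k+1}, z_{k+1}-z_k\rangle$ I control with Young's inequality; the key quantitative choice is to bound it by $\gamma_k^2\|e_{k+1}\|^2 + \tfrac14\|z_{k+1}-z_k\|^2$, so that the MSE coefficient is exactly $\gamma_k^2$ as required. To finish, I convert $-\gamma_k g(z_{k+1})$ into $-\tfrac{\gamma_k}{\tau_k}g(y_{k+1}) + \tfrac{\gamma_k(1-\tau_k)}{\tau_k}g(y_k)$ using convexity of $g$ on $y_{k+1} = \tau_k z_{k+1} + (1-\tau_k)y_k$, which combines with the $f$-values to assemble the $F$-values, and I collect the three quadratic contributions in $\|z_{k+1}-z_k\|^2$ (namely $-\tfrac12$ from the prox step, $+\tfrac14$ from Young, and $+\tfrac{\gamma_k L}{2\tau_k}$ from smoothness after rescaling) using $\|z_{k+1}-z_k\|^2 = \tau_k^{-2}\|x_{k+1}-y_{k+1}\|^2$.

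The main obstacle is bookkeeping rather than any single hard estimate: every one of the four pieces is replaced by either a lower bound (the two $\nabla f$ pieces) or an upper bound (the Young step, the $g$-convexity step), and I must check that each replacement moves terms in the direction preserving the final ``$0 \le \cdots$'' inequality. Two cancellations must come out exactly: the $f(x_{k+1})$ contributions must sum to zero (the coefficients $\tfrac{\gamma_k}{\tau_k}$, $-\tfrac{\gamma_k(1-\tau_k)}{\tau_k}$, $-\gamma_k$ indeed cancel), and the $\|x_{k+1}-y_{k+1}\|^2$ coefficient must collapse to $\tfrac{\gamma_k}{\tau_k}(\tfrac{L}{2} - \tfrac{1}{4\tau_k\gamma_k})$. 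This last point is where the constant genuinely matters: the Young parameter $\tfrac12$ is engineered precisely so that the residual is $-\tfrac14\|z_{k+1}-z_k\|^2$, yielding the factor $\tfrac14$ rather than $\tfrac12$.
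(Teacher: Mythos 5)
Your proposal is correct and is essentially the paper's own argument: both decompose the same quantity into the same four inner products, invoke Lemma~\ref{lem:strongcon} for the $z$-step, the descent lemma plus the identity $x_{k+1}-y_{k+1}=\tau_k(z_k-z_{k+1})$ for the $y$-step, Young's inequality with the same parameter (giving the coefficients $\gamma_k^2$ and $\tfrac{1}{4\tau_k^2}$), and convexity of $g$ on $y_{k+1}=\tau_k z_{k+1}+(1-\tau_k)y_k$. The only difference is organisational --- you start from the prox inequality and lower-bound its left-hand side, whereas the paper starts from convexity of $f$ at $x_{k+1}$ and works toward the prox inequality --- and the bookkeeping you describe checks out exactly.
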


\begin{proof}

We use a linear coupling argument. The extrapolated iterate $x_{k+1}$ can be viewed as a convex combination of an iterate produced from mirror descent (namely, $z_k$) and one from gradient descent ($y_k$). This allows us to provide two bounds on the term $f(x_{k+1}) - f(x^*)$: one is a regret bound inspired by the classical analysis of mirror descent, and the other is inspired by the traditional descent guarantee of gradient descent.
\begin{align}
\label{eq:total}
& \gamma_k (f(x_{k+1}) - f(x^*)) \notag \\
&\symnum{1}{\le} \gamma_k\langle \nabla f(x_{k+1}),x_{k+1} - x^* \rangle \notag \\
%%%%%%%%%%%%%%%%%%%%%%%%%%%%%%%%%%%%%%%%%%%%%
& = \gamma_k \langle \nabla f(x_{k+1}),x_{k+1} - z_k \rangle + \gamma_k \langle \nabla f(x_{k+1}), z_k - x^* \rangle \notag \\
%%%%%%%%%%%%%%%%%%%%%%%%%%%%%%%%%%%%%%%%%%%%%
&\symnum{2}{=} \frac{\gamma_k(1 - \tau_k)}{\tau_k} \langle \nabla f(x_{k+1}), y_k - x_{k+1} \rangle + \gamma_k\langle \nabla f(x_{k+1}), z_k - x^* \rangle \notag \\
%%%%%%%%%%%%%%%%%%%%%%%%%%%%%%%%%%%%%%%%%%%%%
&= \frac{\gamma_k(1-\tau_k)}{\tau_k} ( f(y_k) - f(x_{k+1}) ) + \gamma_k  \left\langle \widetilde{\nabla}_{k+1}, z_k - x^* \right\rangle \notag \\
& \quad \quad - \frac{\gamma_k(1 - \tau_k)}{\tau_k} D_f(y_k,x_{k+1}) + \gamma_k \left\langle \nabla f (x_{k+1}) - \widetilde{\nabla}_{k+1}, z_k - x^* \right\rangle \notag \\
%%%%%%%%%%%%%%%%%%%%%%%%%%%%%%%%%%%%%%%%%%%%%
&= \frac{\gamma_k(1-\tau_k)}{\tau_k} ( f(y_k) - f(x_{k+1}) ) + \gamma_k  \left\langle \widetilde{\nabla}_{k+1}, z_k - z_{k+1} \right\rangle \notag \\
& \quad \quad + \gamma_k  \left\langle \widetilde{\nabla}_{k+1}, z_{k+1} - x^* \right\rangle - \frac{\gamma_k(1 - \tau_k)}{\tau_k} D_f(y_k,x_{k+1}) \notag \\
& \quad \quad + \gamma_k \left\langle \nabla f (x_{k+1}) - \widetilde{\nabla}_{k+1}, z_k - x^* \right\rangle \notag \\
%%%%%%%%%%%%%%%%%%%%%%%%%%%%%%%%%%%%%%%%%%%%%
&\symnum{3}{=} \frac{\gamma_k(1-\tau_k)}{\tau_k} ( f(y_k) - f(x_{k+1}) ) + \frac{\gamma_k}{\tau_k} \left\langle \widetilde{\nabla}_{k+1}, x_{k+1} - y_{k+1} \right\rangle \notag \\
& \quad \quad + \gamma_k  \left\langle \widetilde{\nabla}_{k+1}, z_{k+1} - x^* \right\rangle - \frac{\gamma_k(1 - \tau_k)}{\tau_k} D_f(y_k,x_{k+1}) \notag \\
& \quad \quad + \gamma_k \left\langle \nabla f (x_{k+1}) - \widetilde{\nabla}_{k+1}, z_k - x^* \right\rangle
\end{align}
Inequality \numcirc{1} uses the convexity of $f$, \numcirc{2} follows from the fact that $x_{k+1} = \tau_k z_k + (1 - \tau_k) y_k$, and \numcirc{3} uses $x_{k+1} - y_{k+1} = \tau_k (z_k - z_{k+1})$. We proceed to bound the inner product $\langle \widetilde{\nabla}_{k+1}, z_{k+1} - x^* \rangle$ involving the sequence $z_{k+1}$ using a regret bound from mirror descent, and we bound the term $\langle \widetilde{\nabla}_{k+1}, x_{k+1} - y_{k+1} \rangle$ using an argument similar to the descent guarantee of gradient descent.

By Lemma \ref{lem:strongcon} with $z = z_{k+1}$, $x = z_k$, $y = x^*$, $d = \widetilde{\nabla}_{k+1}$, and $\eta =  \gamma_k$,
\begin{align}
\label{eq:mirror}
    & \gamma_k \left\langle \widetilde{\nabla}_{k+1}, z_{k+1} - x^* \right\rangle \notag \\
    & \le \frac{1}{2 } \|z_k - x^*\|^2 - \frac{1 +  \mu \gamma_k}{2 } \|z_{k+1} - x^*\|^2 - \frac{1}{2 } \|z_{k+1} - z_k\|^2 \notag \\
    & \quad \quad - \gamma_k g(z_{k+1}) + \gamma_k g(x^*) \notag \\
    & = \frac{1}{2 } \|z_k - x^*\|^2 - \frac{1 +  \mu \gamma_k}{2 } \|z_{k+1} - x^*\|^2 - \frac{1}{2  \tau_k^2} \|x_{k+1} - y_{k+1}\|^2 \notag \\
    & \quad \quad  - \gamma_k g(z_{k+1}) + \gamma_k g(x^*).
\end{align}
For the other term,
\begin{align}
\label{eq:grad}
%%%%%%%%%%%%%%%%%%%%%%%%%%%%%%%%%%%%%%%%%%%%%%%%%%%%%%%%%%%%%%%%%%%%%%%%%%%%
& \frac{\gamma_k}{\tau_k} \langle \widetilde{\nabla}_{k+1}, x_{k+1} - y_{k+1} \rangle \notag \\
%%%%%%%%%%%%%%%%%%%%%%%%%%%%%%%%%%%%%%%%%%%%%%%%%%%%%%%%%%%%%%%%%%%%%%%%%
= & \frac{\gamma_k}{\tau_k} \langle \nabla f(x_{k+1}), x_{k+1} - y_{k+1} \rangle + \frac{\gamma_k}{\tau_k} \langle \widetilde{\nabla}_{k+1} - \nabla f(x_{k+1}), x_{k+1} - y_{k+1} \rangle \notag \\
%%%%%%%%%%%%%%%%%%%%%%%%%%%%%%%%%%%%%%%%%%%%%%%%%%%%%%%%%%%%%%%%%%%%%%%%%%
\symnum{1}{\le} & \frac{\gamma_k}{\tau_k} \left( f(x_{k+1}) - f(y_{k+1}) \right) + \frac{\gamma_k}{\tau_k} \langle \widetilde{\nabla}_{k+1} - \nabla f(x_{k+1}), x_{k+1} - y_{k+1} \rangle \notag \\
& + \frac{L \gamma_k}{2 \tau_k} \|x_{k+1} - y_{k+1} \|^2 \notag \\
%%%%%%%%%%%%%%%%%%%%%%%%%%%%%%%%%%%%%%%%%%%%%%%%%%%%%%%%%%%%%%%%%%%%%%%%%%
\symnum{2}{\le} & \frac{\gamma_k}{\tau_k} \left( f(x_{k+1}) - f(y_{k+1}) \right) + \gamma_k^2  \| \widetilde{\nabla}_{k+1} - \nabla f(x_{k+1})\|^2 \notag \\
& + \left(\frac{L \gamma_k}{2 \tau_k} + \frac{1}{4  \tau_k^2} \right) \|x_{k+1} - y_{k+1} \|^2 \notag \\
%%%%%%%%%%%%%%%%%%%%%%%%%%%%%%%%%%%%%%%%%%%%%%%%%%%%%%%%%%%%%%%%%%%%%%%%%%
= & \frac{\gamma_k}{\tau_k} \left( f(x_{k+1}) - F(y_{k+1}) \right) + \gamma_k^2  \| \widetilde{\nabla}_{k+1} - \nabla f(x_{k+1})\|^2 \notag \\
&+ \left(\frac{L \gamma_k}{2 \tau_k} + \frac{1}{4  \tau_k^2} \right)\|x_{k+1} - y_{k+1} \|^2 + \frac{\gamma_k}{\tau_k} g(y_{k+1}) \notag \\
%%%%%%%%%%%%%%%%%%%%%%%%%%%%%%%%%%%%%%%%%%%%%%%%%%%%%%%%%%%%%%%%%%%%%%%%%%
\symnum{3}{\le} & \frac{\gamma_k}{\tau_k} \left( f(x_{k+1}) - F(y_{k+1}) \right) + \gamma_k^2  \| \widetilde{\nabla}_{k+1} - \nabla f(x_{k+1})\|^2 \notag \\
& + \left(\frac{L \gamma_k}{2 \tau_k} + \frac{1}{4 \tau_k^2} \right)\|x_{k+1} - y_{k+1} \|^2 + \gamma_k g(z_{k+1}) + \frac{\gamma_k (1-\tau_k)}{\tau_k} g(y_k).
\end{align}
Inequality $\raisebox{.5pt}{\footnotesize \textcircled{\raisebox{-.9pt} {1}}}$ follows from the Lipschitz continuity of $\nabla f_i$, $\raisebox{.5pt}{\footnotesize \textcircled{\raisebox{-.9pt} {2}}}$ is Young's inequality, and $\raisebox{.5pt}{\footnotesize \textcircled{\raisebox{-.9pt} {3}}}$ uses the convexity of $g$ and the update rule $y_{k+1} = \tau_k z_{k+1} + (1-\tau_k) y_k$. Combining inequalities \eqref{eq:mirror} and \eqref{eq:grad} with \eqref{eq:total} and rearranging yields the assertion.
\end{proof}

Lemma \ref{lem:main} completes the linear coupling part of our argument. If not for the MSE and bias terms, we could telescope this inequality as in \cite{lincoup} and prove an accelerated convergence rate. As with all analyses of stochastic gradient methods, we need a useful bound on these qualities of the estimator.

Existing analyses of unbiased stochastic gradient methods bound the variance term by a pair of terms that telescope over several iterations, showing that the variance tends to zero with the number of iterations. It is difficult to generalise these arguments to accelerated stochastic methods because one must prove that the variance decreases at an accelerated rate that is inconsistent with existing variance bounds. In the analysis of Katyusha, negative momentum cancels part of the variance term, leaving telescoping terms that decrease at an accelerated rate. Without negative momentum, we must handle the variance term differently.

In the inequality of Lemma \ref{lem:main}, we have two non-positive terms:
\begin{equation}
    - \frac{1}{\tau_k^2} \|x_{k+1} - y_{k+1}\|^2 \quad \textnormal{and} \quad - \frac{\gamma_k (1-\tau_k)}{\tau_k} D_f(y_k, x_{k+1}).
\end{equation}
This makes our strategy clear: we must bound the MSE and bias terms by terms of the form $\|x_{k+1} - y_{k+1}\|^2$ and $D_f(y_k,x_{k+1})$. The following two lemmas use the MSEB property to establish bounds of this form.

\begin{lemma}[\textbf{Bias Term Bound}]
\label{lem:biasbound}
    Suppose the stochastic gradient estimator $\widetilde{\nabla}_{k+1}$ satisfies the \emph{MSEB}$(M_1$, $M_2,$ $\rho_M,\rho_B,\rho_F)$ property, let $\rho = \min\{ \rho_M, \rho_B, \rho_F \}$, and let $\{\sigma_k\}$ and $\{s_k\}$ be any non-negative sequences satisfying $\sigma_k s_k^2 \left( 1 - \rho \right) \le \sigma_{k-1} s^2_{k-1} \left( 1 - \frac{\rho}{2} \right)$ and $\sigma_k \left( 1 - \rho \right) \le \sigma_{k-1} \left( 1 - \frac{\rho}{2} \right)$. The bias term can be bounded as
    \begin{align}
    \label{thm1:bias}
        & \sum_{k=0}^{T-1} \sigma_k s_k \mathbb{E} \left\langle \nabla f(x_{k+1}) - \widetilde{\nabla}_{k+1}, z_k - x^* \right\rangle \notag \\
        \le & (1-\rho_B) \sum_{k=0}^{T-1} \sigma_k \mathbb{E} \left[ \frac{8 s^2_k}{\rho_B^2 \rho_M} \left\| \nabla f(x_{k+1}) - \widetilde{\nabla}_{k+1} \right\|^2 + \frac{\rho_M}{8 \tau_k^2} \|x_{k+1} - y_{k+1}\|^2 \right].
    \end{align}
\end{lemma}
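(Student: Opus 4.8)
The plan is to reduce the ``forward'' bias pairing at step $k+1$ to the pairing at step $k$ using the defining identity of the MSEB property, and then to telescope. Write $e_j \defeq \nabla f(x_j) - \widetilde{\nabla}_j$ for the signed bias, so that the MSEB bias identity reads $\E e_{k+1} = (1-\rho_B)\,e_k$. Since $x_{k+1} = \tau_k z_k + (1-\tau_k)y_k$ and $z_k$ are determined by the first $k$ iterates, both $z_k - x^*$ and $\nabla f(x_{k+1})$ are measurable with respect to the conditioning in $\E$; hence by the tower property
\begin{equation}
\mathbb{E}\left\langle \nabla f(x_{k+1}) - \widetilde{\nabla}_{k+1},\, z_k - x^* \right\rangle = (1-\rho_B)\,\mathbb{E}\left\langle \nabla f(x_k) - \widetilde{\nabla}_k,\, z_k - x^* \right\rangle .
\end{equation}
This already exposes the prefactor $(1-\rho_B)$ on the right-hand side of the claim, and it explains why the whole bound collapses for unbiased estimators ($\rho_B = 1$).

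Next I would split $z_k - x^* = (z_{k-1}-x^*) + (z_k - z_{k-1})$ and apply the same identity to the first piece, which is now measurable one step earlier. This yields the clean scalar recursion $A_k = (1-\rho_B)A_{k-1} + D_k$, where $A_k \defeq \mathbb{E}\langle e_k, z_k - x^*\rangle$ and $D_k \defeq \mathbb{E}\langle e_k, z_k - z_{k-1}\rangle$. With the natural initialisation $\widetilde{\nabla}_0 = \nabla f(x_0)$ we have $e_0 = 0$ and $A_0 = 0$, so unrolling gives $A_k = \sum_{\ell=1}^{k}(1-\rho_B)^{k-\ell}D_\ell$. Substituting into $\sum_k \sigma_k s_k A_k$ and exchanging the order of summation converts the target into $\sum_\ell D_\ell\, I_\ell$ with the geometric weight $I_\ell \defeq \sum_{k\ge \ell}\sigma_k s_k (1-\rho_B)^{k-\ell}$; because $I_\ell \ge 0$, I may then pass termwise to any upper bound on $D_\ell$.

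The crux is controlling $I_\ell$, and this is exactly what the two hypotheses on $\{\sigma_k\}$ and $\{s_k\}$ are for. Telescoping each hypothesis against powers of $(1-\rho)$ gives $\sigma_k s_k^2 (1-\rho)^{k-\ell} \le \sigma_\ell s_\ell^2 (1-\rho/2)^{k-\ell}$ and $\sigma_k (1-\rho)^{k-\ell} \le \sigma_\ell (1-\rho/2)^{k-\ell}$; multiplying and taking square roots controls the degree-one weight $\sigma_k s_k$, and since $\rho \le \rho_B$ the bias decay $(1-\rho_B)^{k-\ell}$ is dominated by $(1-\rho)^{k-\ell}$. The permitted growth of $\sigma_k s_k$ is thus beaten by the decay, leaving a convergent geometric series with ratio $1-\rho/2$ and hence $I_\ell \le \tfrac{2}{\rho}\sigma_\ell s_\ell$. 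Finally I would use $z_\ell - z_{\ell-1} = -\tfrac{1}{\tau_{\ell-1}}(x_\ell - y_\ell)$, which follows from $x_{k+1}-y_{k+1} = \tau_k(z_k - z_{k+1})$, and apply Young's inequality to $D_\ell$ with a parameter proportional to $s_\ell$. The $s_\ell$-weight is what converts the degree-one factor $\sigma_\ell s_\ell$ into the degree-two coefficient on $\|\nabla f(x_\ell)-\widetilde{\nabla}_\ell\|^2$ and a degree-zero coefficient on $\|x_\ell - y_\ell\|^2$; reindexing $\ell = k+1$ then produces the two terms in the statement.

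I expect the main obstacle to be purely quantitative: choosing the Young parameter so that the constants come out as exactly $\tfrac{8}{\rho_B^2\rho_M}$ and $\tfrac{\rho_M}{8}$ while simultaneously respecting the bound $I_\ell \le \tfrac{2}{\rho}\sigma_\ell s_\ell$. The delicate point is the competition between the growth rate of $\sigma_k s_k$ allowed by the hypotheses and the decay rate $\rho_B$ of the bias, and the role of the factor $1-\rho/2$ is precisely to guarantee that the geometric series is summable with a usable constant. Tracking the three decay parameters $\rho_M,\rho_B,\rho_F$ through $\rho = \min\{\rho_M,\rho_B,\rho_F\}$, and keeping the final coefficients aligned with the non-positive terms $-\tfrac{1}{\tau_k^2}\|x_{k+1}-y_{k+1}\|^2$ that Lemma~\ref{lem:main} later absorbs them into, is the part that demands the most care.
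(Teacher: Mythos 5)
Your proposal is correct and follows essentially the same route as the paper's proof: the MSEB contraction identity plus the tower property to extract the factor $(1-\rho_B)$, the increment decomposition $z_k - x^* = (z_k - z_{k-1}) + (z_{k-1} - x^*)$ to set up a recursion, unrolling into a geometric sum controlled by the hypotheses on $\sigma_k s_k^2$ and $\sigma_k$ (the paper's Lemma~\ref{lem:tech}), Young's inequality to split $\langle \nabla f(x_\ell) - \widetilde{\nabla}_\ell, z_\ell - z_{\ell-1}\rangle$, and the identity $x_{k+1}-y_{k+1}=\tau_k(z_k-z_{k+1})$. The only difference is that you keep the recursion exact and apply Young's once at the end, whereas the paper applies it inside each step of the recursion; for the constants to come out as stated you should collapse the geometric weight with ratio $1-\rho_B/2$ (giving $I_\ell \le \tfrac{2}{\rho_B}\sigma_\ell s_\ell$, which the hypotheses permit since $\rho \le \rho_B$) rather than with $1-\rho/2$.
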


\begin{proof}

Because $z_k$ depends only on the first $k-1$ iterates, we can use the MSEB property to say
\begin{align}
    & \sigma_k s_k \mathbb{E} \left\langle \nabla f(x_{k+1}) - \widetilde{\nabla}_{k+1}, z_k - x^* \right\rangle \\
    = & \ \sigma_k s_k \mathbb{E} \left\langle \nabla f(x_{k+1}) - \E \widetilde{\nabla}_{k+1}, z_k - x^* \right\rangle \\
    \symnum{1}{=} & \ \sigma_k s_k (1-\rho_B) \mathbb{E} \left\langle \nabla f(x_k) - \widetilde{\nabla}_k, z_k - x^* \right\rangle \\
    %%%%%%%%%%%%%%%%%%%%%%%%%%%%%%%%%%%%%%%%%%%%%%%%%%%%%%%%%%
    \symnum{2}{=} & \ \sigma_k (1-\rho_B) \mathbb{E} \Big[ s_k \left\langle \nabla f(x_k) - \widetilde{\nabla}_k, z_k - z_{k-1} \right\rangle + s_k \left\langle \nabla f(x_k) - \mathbb{E}_{k-1} \widetilde{\nabla}_k, z_{k-1} - x^* \right\rangle \Big] \\
    %%%%%%%%%%%%%%%%%%%%%%%%%%%%%%%%%%%%%%%%%%%%%%%%%%%%%%%%%%%%%%
    \symnum{3}{\le} & \  \sigma_k (1-\rho_B) \mathbb{E} \Bigg[ \frac{4 s_k^2}{\rho_M \rho_B} \left\| \nabla f(x_k) - \widetilde{\nabla}_k \right\|^2 + \frac{\rho_M \rho_B}{16} \|z_k - z_{k-1}\|^2 + s_k \left\langle \nabla f(x_k) - \mathbb{E}_{k-1} \widetilde{\nabla}_k, z_{k-1} - x^* \right\rangle \Bigg].
\end{align}
Equality \numcirc{1} is due to the MSEB property. We are able to pass the conditional expectation into the second inner product in \numcirc{2} because $z_{k-1}$ is independent of $\widetilde{\nabla}_k$ conditioned on the first $k-2$ iterates, and inequality \numcirc{3} is Young's. We can repeat this process once more, applying the MSEB property to obtain
\begin{align}
    %%%%%%%%%%%%%%%%%%%%%%%%%%%%%%%%%%%%%%%%%%%%%%%%%%%%%%%%%%%%%%
    & \sigma_k (1-\rho_B) \mathbb{E} \Bigg[ \frac{4 s_k^2}{\rho_M \rho_B} \left\| \nabla f(x_k) - \widetilde{\nabla}_k \right\|^2 + \frac{\rho_M \rho_B}{16} \|z_k - z_{k-1}\|^2 \notag \\
    & + s_k (1-\rho_B) \left\langle \nabla f(x_{k-1}) - \widetilde{\nabla}_{k-1}, z_{k-1} - x^* \right\rangle \Bigg] \\
    %%%%%%%%%%%%%%%%%%%%%%%%%%%%%%%%%%%%%%%%%%%%%%%%%%%%%%%%%%%%%%
    \le & \sigma_k (1-\rho_B) \mathbb{E} \Bigg[ \frac{4 s_k^2}{\rho_M \rho_B} \left\| \nabla f(x_k) - \widetilde{\nabla}_k \right\|^2 + \frac{4 s_k^2 (1 - \rho_B)}{\rho_M \rho_B} \left\| \nabla f(x_{k-1}) - \widetilde{\nabla}_{k-1} \right\|^2 \notag \\
    & + \frac{\rho_M \rho_B}{16} \left(\|z_k - z_{k-1}\|^2 + (1 - \rho_B) \|z_{k-1} - z_{k-2}\|^2 \right) \notag \\
    & + s_k (1-\rho_B) \left\langle \nabla f(x_{k-1}) - \widetilde{\nabla}_{k-1}, z_{k-2} - x^* \right\rangle \Bigg] \\
    %%%%%%%%%%%%%%%%%%%%%%%%%%%%%%%%%%%%%%%%%%%%%%%%%%%%%%%%%%%%%%
    \symnum{4}{\le} & (1-\rho_B) \mathbb{E} \Bigg[ \frac{4 \sigma_k s_k^2}{\rho_M \rho_B} \left\| \nabla f(x_k) - \widetilde{\nabla}_k \right\|^2 \notag \\
    & + \frac{4 \sigma_{k-1} s_{k-1}^2 \left( 1 - \frac{\rho_B}{2} \right)}{\rho_M \rho_B} \left\| \nabla f(x_{k-1}) - \widetilde{\nabla}_{k-1} \right\|^2 + \frac{\rho_M \rho_B}{16} \Big(\sigma_k \|z_k - z_{k-1}\|^2 \notag \\
    & + \sigma_{k-1} \left(1 - \frac{\rho_B}{2} \right) \|z_{k-1} - z_{k-2}\|^2 \Big) \notag \\
    & + \sigma_k s_k (1-\rho_B) \left\langle \nabla f(x_{k-1}) - \widetilde{\nabla}_{k-1}, z_{k-2} - x^* \right\rangle \Bigg].
\end{align}
Inequality \numcirc{4} uses our hypotheses on the decrease of $\sigma_k s_k^2$ and $\sigma_k$. This is a recursive inequality, and expanding the recursion yields
\begin{align}
    & \sigma_k s_k \mathbb{E} \left\langle \nabla f(x_{k+1}) - \E \widetilde{\nabla}_{k+1}, z_k - x^* \right\rangle \\
    \le & (1-\rho_B) \sum_{\ell=1}^k \sigma_\ell \mathbb{E} \Bigg[ \frac{4 s^2_\ell (1-\frac{\rho_B}{2})^{k-\ell}}{\rho_M \rho_B} \left\| \nabla f(x_\ell) - \widetilde{\nabla}_\ell \right\|^2 \notag \\
    & \quad \quad\quad\quad\quad\quad\quad\quad\quad\quad\quad\quad\quad\quad+ \frac{\rho_M \rho_B (1-\frac{\rho_B}{2})^{k-\ell}}{16} \|z_\ell - z_{\ell-1}\|^2 \Bigg].
\end{align}
The above uses the fact that $\widetilde{\nabla}_1 = \nabla f(x_1)$, so the inner product $\langle \nabla f(x_{1}) - \widetilde{\nabla}_{1},$ $ z_0 - x^* \rangle = 0$. Taking the sum over the iterations $k=0$ to $k=T-1$, we apply Lemma \ref{lem:tech} to simplify this bound.
\begin{align}
\label{thm1:bias2}
    & \sum_{k=0}^{T-1} \sigma_k s_k \mathbb{E} \left\langle \nabla f(x_{k+1}) - \E \widetilde{\nabla}_{k+1}, z_k - x^* \right\rangle \notag \\
    %%%%%%%%%%%%%%%%%%%%%%%%%%%%%%%%%%%%%%%%%%%%%%%%%
    \le & (1-\rho_B) \sum_{k=1}^{T-1} \sum_{\ell=1}^k \sigma_\ell \mathbb{E} \Big[ \frac{4 s^2_\ell (1-\frac{\rho_B}{2})^{k-\ell}}{\rho_M \rho_B} \left\| \nabla f(x_\ell) - \widetilde{\nabla}_\ell \right\|^2 \notag \\
    & \quad \quad\quad\quad\quad\quad\quad\quad\quad\quad\quad\quad\quad\quad+ \frac{\rho_M \rho_B (1-\frac{\rho_B}{2})^{k-\ell}}{16} \|z_\ell - z_{\ell-1}\|^2 \Big] \\
    %%%%%%%%%%%%%%%%%%%%%%%%%%%%%%%%%%%%%%%%%%%%%%
    \symnum{1}{\le} & (1-\rho_B) \sum_{k=0}^{T-1} \sigma_k \mathbb{E} \left[ \frac{8 s^2_k}{\rho_B^2 \rho_M} \left\| \nabla f(x_{k+1}) - \widetilde{\nabla}_{k+1} \right\|^2 + \frac{\rho_M}{8} \|z_{k+1} - z_k\|^2 \right] \\
    \symnum{2}{=} & (1-\rho_B) \sum_{k=0}^{T-1} \sigma_k \mathbb{E} \left[ \frac{8 s^2_k}{\rho_B^2 \rho_M} \left\| \nabla f(x_{k+1}) - \widetilde{\nabla}_{k+1} \right\|^2 + \frac{\rho_M}{8 \tau_k^2} \|x_{k+1} - y_{k+1}\|^2 \right].
\end{align}
Inequality \numcirc{1} follows from Lemma \ref{lem:tech}, and equality \numcirc{2} is the identity $y_{k+1} - x_{k+1} = \tau_k (z_{k+1} - z_k)$.
\end{proof}

This bound on the bias term includes the MSE, so to complete our bound on the bias term, we must combine Lemma \ref{lem:biasbound} with the following lemma.

\begin{lemma}[\textbf{MSE Bound}]
\label{lem:msebound}
    Suppose the stochastic gradient estimator $\widetilde{\nabla}_{k+1}$ satisfies the \emph{MSEB}$(M_1,M_2,\rho_M,$ $\rho_B,\rho_F)$ property, let $\rho = \min\{ \rho_M, \rho_B, \rho_F \}$, and let $\{s_k\}$ be any non-negative sequence satisfying $s^2_k \left( 1 - \rho \right) \le s^2_{k-1} \left( 1 - \frac{\rho}{2} \right)$. For convenience, define $\Theta_2 = \frac{M_1 \rho_F + 2 M_2}{\rho_M \rho_F}$. The MSE of the gradient estimator is bounded as
    \begin{align}
        & \sum_{k=0}^{T-1} s_k^2 \mathbb{E} \|\nabla f(x_{k+1}) - \widetilde{\nabla}_{k+1}\|^2 \le \sum_{k=0}^{T-1} 4 \Theta_2 L s_k^2 \mathbb{E} \left[ 2 D_f(y_k,x_{k+1}) + L \| x_{k+1} - y_{k+1} \|^2 \right]
    \end{align}
\end{lemma}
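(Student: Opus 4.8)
The plan is to start from the MSEB moment bound $\mathbb{E}\|\widetilde{\nabla}_{k+1} - \nabla f(x_{k+1})\|^2 \le \mathcal{M}_k$ and convert the controlled decrease of $\mathcal{M}_k$ into a bound involving only the ``good'' quantities $D_f(y_k,x_{k+1})$ and $\|x_{k+1}-y_{k+1}\|^2$. Writing $G_\ell \defeq \tfrac{1}{n}\sum_{i=1}^n \mathbb{E}\|\nabla f_i(x_{\ell+1}) - \nabla f_i(x_\ell)\|^2$, I would first unroll the recursion $\mathcal{M}_k \le M_1 G_k + \mathcal{F}_k + (1-\rho_M)\mathcal{M}_{k-1}$ to obtain $\mathcal{M}_k \le \sum_{j=0}^k (1-\rho_M)^{k-j}(M_1 G_j + \mathcal{F}_j)$, and then substitute the given bound $\mathcal{F}_j \le \sum_{\ell=0}^j M_2(1-\rho_F)^{j-\ell}G_\ell$. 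This expresses $\mathcal{M}_k$ as a nonnegative combination of the $G_\ell$ with purely geometric weights.

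Next I would form $\sum_{k=0}^{T-1} s_k^2\mathcal{M}_k$ and interchange the order of summation so that each $G_\ell$ becomes the outermost index, leaving it multiplied by nested geometric sums of the form $\sum_{k\ge j}s_k^2(1-\rho_M)^{k-j}$ and $\sum_{j\ge\ell}(1-\rho_F)^{j-\ell}(\cdots)$. The decrease hypothesis $s_k^2(1-\rho)\le s_{k-1}^2(1-\tfrac{\rho}{2})$ together with $\rho = \min\{\rho_M,\rho_B,\rho_F\}$ is exactly what is needed to sum these: since $\rho\le\rho_M$ one checks that $\tfrac{s_k^2}{s_{k-1}^2}(1-\rho_M)\le 1-\tfrac{\rho}{2} < 1$, so the $\rho_M$-weighted inner sum is controlled by $\tfrac{C}{\rho_M}s_j^2$, and likewise the $\rho_F$-weighted sum by $\tfrac{C}{\rho_F}s_\ell^2$; this is precisely the content of Lemma \ref{lem:tech}. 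Collecting the $M_1$ term (one factor $1/\rho_M$) and the $M_2$ term (two factors, giving $1/(\rho_M\rho_F)$) reproduces the constant $\Theta_2 = \tfrac{M_1\rho_F + 2M_2}{\rho_M\rho_F}$, so that $\sum_k s_k^2\mathcal{M}_k \le C\Theta_2\sum_\ell s_\ell^2 G_\ell$.

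Finally I would convert each $G_\ell$ into the target quantities. Splitting $\nabla f_i(x_{\ell+1}) - \nabla f_i(x_\ell)$ through the intermediate point $y_\ell$ and applying the triangle inequality gives $G_\ell \le 4L D_f(y_\ell,x_{\ell+1}) + 2L^2\|x_\ell - y_\ell\|^2$, where the first term uses the finite-sum Bregman bound of Lemma \ref{lem:varcan} and the second uses the $L$-Lipschitz continuity of each $\nabla f_i$. The algorithmic identity $x_\ell - y_\ell = \tau_{\ell-1}(z_{\ell-1} - z_\ell)$ identifies $\|x_\ell - y_\ell\|^2$ with the index-$(\ell-1)$ copy of $\|x_{\ell+1}-y_{\ell+1}\|^2$, so after reindexing and using the growth bound on $s_k^2$ once more to pass from $s_\ell^2$ to $s_{\ell-1}^2$, the two contributions assemble into the claimed $4\Theta_2 L s_k^2\bigl(2D_f(y_k,x_{k+1}) + L\|x_{k+1}-y_{k+1}\|^2\bigr)$ form.

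The main obstacle is the triple-sum bookkeeping in the second step: one must interchange two nested geometric sums and verify that the decrease condition forces each to collapse to a clean $1/\rho_M$ or $1/\rho_F$ factor rather than the weaker $1/\rho$, so that the constants fuse exactly into $\Theta_2$. A secondary subtlety is the index shift in the final step — the estimator compares the consecutive iterates $x_\ell, x_{\ell+1}$, whereas the target is phrased with $y_\ell$ and $x_{\ell+1}-y_{\ell+1}$, so the $\|x_\ell - y_\ell\|^2$ contribution must be reabsorbed at the shifted index, which is possible only because the $s_k^2$ grow by at most a bounded factor per step.
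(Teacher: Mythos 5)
Your proposal is correct and follows essentially the same route as the paper's proof: both collapse the $\mathcal{F}_k$ and $\mathcal{M}_k$ recursions into a single weighted sum of $\tfrac{1}{n}\sum_i\mathbb{E}\|\nabla f_i(x_{k+1})-\nabla f_i(x_k)\|^2$ via Lemma \ref{lem:tech} and the decrease condition on $s_k^2$ (the paper bounds the $\rho_F$-sum first and then unrolls the $\rho_M$-recursion, whereas you unroll everything and interchange the sums --- the same computation in a different order), and both then split the increment through $y_k$ using Lemma \ref{lem:varcan} and the Lipschitz continuity of $\nabla f_i$. The one point where you are more careful than the paper is the final index shift: the paper's proof terminates with a term $4\Theta_2 L^2 s_k^2\|x_k-y_k\|^2$ and silently identifies it with the $\|x_{k+1}-y_{k+1}\|^2$ appearing in the statement, while you explicitly reindex using $x_0=y_0$ and a bound on the per-step growth of $s_k^2$, which is indeed needed and holds for the step-size sequences used in Theorems \ref{thm:main1} and \ref{thm:main2}.
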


\begin{proof}
    First, we derive a bound on the sequence $\mathcal{F}_k$ arising in the MSEB property. Taking the sum from $k=0$ to $k=T-1$,
\begin{align}
    \sum_{k=0}^{T-1} s_k^2 \mathcal{F}_k \le & \sum_{k=0}^{T-1} \sum_{\ell=0}^k \frac{M_2 s_k^2 (1-\rho_F)^{k - \ell}}{n} \sum_{i=1}^n \mathbb{E} \| \nabla f_i(x_{\ell+1}) - \nabla f_i(x_\ell) \|^2 \\
    %%%%%%%%%%%%%%%%%%%%%%%%%%%%%%%%%%%%%%
    \symnum{1}{\le} & \sum_{k=0}^{T-1} \sum_{\ell=0}^k \frac{M_2 s_\ell^2 (1-\frac{\rho_F}{2})^{k - \ell}}{n} \sum_{i=1}^n \mathbb{E} \| \nabla f_i(x_{\ell+1}) - \nabla f_i(x_\ell) \|^2 \\
    %%%%%%%%%%%%%%%%%%%%%%%%%%%%%%%%%%%%%%%%
    \symnum{2}{\le} & \sum_{k=0}^{T-1} \frac{2 M_2 s_k^2}{n \rho_F} \sum_{i=1}^n \mathbb{E} \| \nabla f_i(x_{k+1}) - \nabla f_i(x_k) \|^2.
\end{align}
Inequality \numcirc{1} uses the fact that $s_k^2 (1-\rho_F) \le s_{k-1}^2\left( 1-\frac{\rho_F}{2} \right)$, and \numcirc{2} uses Lemma \ref{lem:tech}. With this bound on $\mathcal{F}_k$, we proceed to bound $\mathcal{M}_k$ in a similar fashion.
\begin{align}
    & \sum_{k=0}^{T-1} s_k^2 \mathbb{E} \|\nabla f(x_{k+1}) - \widetilde{\nabla}_{k+1}\|^2 \\
    %%%%%%%%%%%%%%%%%%%%%%%%%%%%%%%%%%%%%%%%%
    \le & \sum_{k=0}^{T-1} \frac{M_1 s_k^2}{n}
    \sum_{i=1}^n \mathbb{E} \|\nabla f_i(x_{k+1}) - \nabla f_i(x_k) \|^2 + s_k^2 \mathcal{F}_k + s_k^2 (1-\rho_M) \mathcal{M}_{k-1} \\
    %%%%%%%%%%%%%%%%%%%%%%%%%%%%%%%%%%%%%%%%%%%
    \le & \sum_{k=0}^{T-1} \frac{(M_1 \rho_F + 2 M_2) s_k^2}{n \rho_F} \sum_{i=1}^n \mathbb{E} \| \nabla f_i(x_{k+1}) - \nabla f_i(x_k) \|^2 + s_k^2 (1-\rho_M) \mathcal{M}_{k-1} \\
    \le & \sum_{k=0}^{T-1} \sum_{\ell=1}^k \frac{\Theta_2 s_k^2 (1-\rho_M)^{k - \ell} \rho_M}{n} \sum_{i=1}^n \mathbb{E} \| \nabla f_i(x_{\ell+1}) - \nabla f_i(x_\ell) \|^2 \\
    %%%%%%%%%%%%%%%%%%%%%%%%%%%%%%%%%%%%%%%%%%%%%%%%
    \symnum{1}{\le} & \sum_{k=0}^{T-1} \sum_{\ell=1}^k \frac{\Theta_2 s_\ell^2 (1-\frac{\rho_M}{2})^{k - \ell} \rho_M}{n} \sum_{i=1}^n \mathbb{E} \| \nabla f_i(x_{\ell+1}) - \nabla f_i(x_\ell) \|^2 \\
    %%%%%%%%%%%%%%%%%%%%%%%%%%%%%%%%%%%%%%%%%%%%%
    \symnum{2}{\le} & \sum_{k=0}^{T-1} \frac{2 \Theta_2 s_k^2}{n} \sum_{i=1}^n \mathbb{E} \| \nabla f_i(x_{k+1}) - \nabla f_i(x_k) \|^2 \\
    %%%%%%%%%%%%%%%%%%%%%%%%%%%%%%%%%%%%%%%%%%%%%%
    \symnum{3}{\le} & \sum_{k=0}^{T-1} \frac{4 \Theta_2 s_k^2}{n} \sum_{i=1}^n \mathbb{E} \| \nabla f_i(x_{k+1}) - \nabla f_i(y_k) \|^2 \notag \\
    & \quad \quad\quad\quad\quad\quad\quad\quad\quad\quad\quad\quad + \frac{4 \Theta_2 s_k^2}{n} \sum_{i=1}^n \mathbb{E} \| \nabla f_i(y_k) - \nabla f_i(x_k) \|^2 \\
    %%%%%%%%%%%%%%%%%%%%%%%%%%%%%%%%%%%%%%%%%%%%%%%
    \symnum{4}{\le} & \sum_{k=0}^{T-1} \left( 8 \Theta_2 L s_k^2 \mathbb{E} D_f(y_k,x_{k+1}) + 4 \Theta_2 L^2 s_k^2 \mathbb{E} \| x_k - y_k \|^2 \right).
\end{align}
Inequality \numcirc{1} uses $s_k^2(1-\rho_M) \le s_{k-1}^2\left( 1-\frac{\rho_M}{2} \right)$, \numcirc{2} uses Lemma \ref{lem:tech}, \numcirc{3} uses the inequality $\|a-c\|^2 \le 2 \|a - b\|^2 + 2 \|b - c\|^2$, and \numcirc{4} uses Lemma \ref{lem:2L} and the Lipschitz continuity of $\nabla f_i$.
\end{proof}

Lemmas \ref{lem:biasbound} and \ref{lem:msebound} show that it is possible to cancel the bias term and the MSE using the non-negative terms appearing in the inequality of Lemma \ref{lem:main}. Without these terms, we can telescope this inequality over several iterations and prove accelerated convergence rates. We are now prepared to prove Theorems \ref{thm:main1} and \ref{thm:main2}.

\renewcommand{\proofname}{\textbf{Proof of Theorem \ref{thm:main1}}}

\begin{proof}

We set $\mu = 0$ in the inequality of Lemma \ref{lem:main}, apply the full expectation operator, and sum the result over the iterations $k=0$ to $k=T-1$.
\begin{align}
\label{thm1:main}
0 \le & \frac{1}{2} \|z_0 - x^*\|^2 - \frac{1}{2} \mathbb{E} \|z_T - x^*\|^2 + \sum_{k=0}^{T-1} \mathbb{E} \Big[ \frac{\gamma_k (1-\tau_k)}{\tau_k} F(y_k) - \frac{\gamma_k}{\tau_k} F(y_{k+1}) \notag \\
& + \gamma_k F(x^*) + \frac{\gamma_k}{\tau_k} \left( \frac{L}{2} - \frac{1}{4 \tau_k \gamma_k} \right) \|x_{k+1} - y_{k+1}\|^2 - \frac{\gamma_k (1-\tau_k)}{\tau_k} D(y_k,x_{k+1}) \notag \\
& + \gamma_k \left\langle \nabla f(x_{k+1}) - \widetilde{\nabla}_{k+1}, z_k - x^* \right\rangle + \gamma_k^2 \|\nabla f(x_{k+1}) - \widetilde{\nabla}_{k+1}\|^2 \Big].
\end{align}
We bound the terms in the final line, beginning with the bias term. Our choice for $\gamma_k$ satisfies $\gamma_k^2 \left( 1 - \rho \right) \le \gamma_{k-1}^2 \left( 1 - \frac{\rho}{2} \right)$, so with $s_k = \gamma_k$ and $\sigma_k = 1$, we apply Lemma \ref{lem:biasbound}. This gives

\begin{align}
    0 \le & \frac{1}{2} \|z_0 - x^*\|^2 + \sum_{k=0}^{T-1} \mathbb{E} \Bigg[ \frac{\gamma_k (1-\tau_k)}{\tau_k} F(y_k) - \frac{\gamma_k}{\tau_k} F(y_{k+1}) + \gamma_k F(x^*) \notag \\
    & + \left( \frac{\gamma_k}{\tau_k} \left( \frac{L}{2} - \frac{1}{4 \tau_k \gamma_k} \right) + \frac{(1-\rho_B) \rho_M}{8 \tau_k^2} \right) \|x_{k+1} - y_{k+1}\|^2 \notag \\
    & - \frac{\gamma_k (1-\tau_k)}{\tau_k} D(y_k,x_{k+1}) + \gamma_k^2 \Theta_1 \|\nabla f(x_{k+1}) - \widetilde{\nabla}_{k+1}\|^2 \Bigg],
\end{align}
where we have dropped the term $-1/2 \mathbb{E} \|z_T - x^*\|^2$ because it is non-positive. Applying Lemma \ref{lem:msebound} to bound the MSE, we have
\begin{align}
\label{eq:thm1main}
    0 \le & \frac{1}{2} \|z_0 - x^*\|^2 + \sum_{k=0}^{T-1} \mathbb{E} \Big[ \frac{\gamma_k (1-\tau_k)}{\tau_k} F(y_k) - \frac{\gamma_k}{\tau_k} F(y_{k+1}) \notag \\
    & + \gamma_k F(x^*) + \left( 8 \gamma_k^2 L \Theta_1 \Theta_2 - \frac{\gamma_k (1-\tau_k)}{\tau_k} \right) D(y_k,x_{k+1}) \notag \\
    %%%%%%%%%%%%%%%%%%%%%%%%%%%%%%%%%%%%%%%%%%%%%%%%%%%%%%
    & + \left( \frac{\rho_M (1-\rho_B)}{8 \tau_k^2} + 4 \gamma_k^2 L^2 \Theta_1 \Theta_2 + \frac{\gamma_k}{\tau_k} \left( \frac{L}{2} - \frac{1}{4 \tau_k \gamma_k} \right) \right) \|x_{k+1} - y_{k+1}\|^2 \Big]. \notag \\
\end{align}

With the parameters set as in the theorem statement, it is clear that the final two lines of \eqref{eq:thm1main} are non-positive (see Appendix \ref{app:nonpos} for a proof).
This allows us to drop these lines from the inequality, leaving
\begin{align}
\label{eq:pretele}
0 \le & \frac{1}{2} \|z_0 - x^*\|^2 + \sum_{k=0}^{T-1} \mathbb{E} \left[ \frac{\gamma_k (1-\tau_k)}{\tau_k} F(y_k)
- \frac{\gamma_k}{\tau_k} F(y_{k+1}) + \gamma_k F(x^*) \right].
%%%%%%%%%%%%%%%%%%%%%%%%%%%%%%%%%%%%%%%%%%
\end{align}
Rewriting $\tau_k$ in terms of $\gamma_k$ shows that this is equivalent to
\begin{align}
0 \le & \frac{1}{2} \|z_0 - x^*\|^2 + \sum_{k=0}^{T-1} \mathbb{E} \left[ (c L \gamma_k^2 - \gamma_k) F(y_k)
- c L \gamma_k^2 F(y_{k+1}) + \gamma_k F(x^*) \right].
\end{align}
Our choice for $\gamma_k$ satisfies $c L \gamma_k^2 - \gamma_k = c L \gamma_{k-1}^2 - \frac{1}{4 c L}$, allowing the $F(y_k)$ terms to telescope. Hence, our inequality is equivalent to 
\begin{align}
0 &\le - c L \gamma_{T-1}^2 \mathbb{E} [F(y_T) - F(x^*)] - \frac{1}{4 c L} \sum_{k=1}^{T-1} \mathbb{E} \left[ F(y_k) - F(x^*) \right] + (c L \gamma_0^2 - \gamma_0) (F(y_0) - F(x^*)) + \frac{1}{2} \|z_0 - x^*\|^2.
\end{align}
Using the facts that $c L \gamma_{T-1}^2 = \frac{(T + \nu + 3)^2}{4 c L}$, $c L \gamma_0^2 - \gamma_0 = \frac{(\nu+2)(\nu+4)}{4 c L}$, and $F(y_k) \le F(x^*)$, we have
\begin{align}
& \frac{(T + \nu + 3)^2}{4 c L} \mathbb{E} [ F(y_T) - F(x^*) ] \le \frac{(\nu+2)(\nu+4)}{4 c L} (F(y_0) - F(x^*)) + \frac{1}{2} \|z_0 - x^*\|^2.
%%%%%%%%%%%%%%%%%%%%%%%%%%%%%%%%%%%%%%%%%%%%%%%%%%%%%%%%%%%%%%%%
\end{align}
This proves the assertion.

\end{proof}

A similar argument proves an accelerated linear convergence rate when strong convexity is present.

\renewcommand{\proofname}{\textbf{Proof of Theorem \ref{thm:main2}}}

\begin{proof}

We recall the inequality of Lemma \ref{lem:main}.
\begin{align}
& \frac{\gamma}{\tau} \left( F(y_{k+1}) - F(x^*) \right) + \frac{(1+\mu \gamma)}{2} \|z_{k+1} - x^*\|^2 \notag \\
\le & \frac{\gamma(1-\tau)}{\tau} \left(F(y_k) - F(x^*)\right) + \frac{1}{2} \|z_k - x^*\|^2 + \gamma^2 \|\widetilde{\nabla}_{k+1} - \nabla f(x_{k+1})\|^2 \notag \\
& + \frac{\gamma}{\tau} \left( \frac{L}{2} - \frac{1}{4 \tau \gamma} \right) \|x_{k+1} - y_{k+1}\|^2 - \frac{\gamma(1-\tau)}{\tau} D_f(y_k,x_{k+1}) \notag \\
& + \gamma \left\langle \nabla f(x_{k+1}) - \widetilde{\nabla}_{k+1}, z_k - x^* \right\rangle.
\end{align}
By our choice of $\gamma$ and $\tau$, we have
\begin{equation}
    \frac{\gamma}{\tau} \left( \frac{\gamma (1 - \tau)}{\tau} \right)^{-1} = \frac{1}{1-\tau} \ge 1 + \tau = 1 + \mu \gamma.
\end{equation}
Therefore, we can extract a factor of $(1+\mu \gamma)$ from the left.
\begin{align}
& (1+\mu \gamma) \Bigg( \frac{\gamma(1-\tau)}{\tau} \left( F(y_{k+1}) - F(x^*) \right) + \frac{1}{2} \|z_{k+1} - x^*\|^2 \Bigg) \notag \\
\le & \frac{\gamma(1-\tau)}{\tau} \left(F(y_k) - F(x^*)\right) + \frac{1}{2} \|z_k - x^*\|^2 + \gamma^2 \|\widetilde{\nabla}_{k+1} - \nabla f(x_{k+1})\|^2 \notag \\
& + \frac{\gamma}{\tau} \left( \frac{L}{2} - \frac{1}{4 \tau \gamma} \right) \|x_{k+1} - y_{k+1}\|^2 - \frac{\gamma(1-\tau)}{\tau} D_f(y_k,x_{k+1}) \notag \\
& + \gamma \left\langle \nabla f(x_{k+1}) - \widetilde{\nabla}_{k+1}, z_k - x^* \right\rangle.
\end{align}
Multiplying this inequality by $(1+\mu \gamma)^k$, summing over iterations $k=0$ to $k = T-1$, and applying the full expectation operator, we obtain the bound
\begin{align}
\label{eq:thm2}
& (1+\mu \gamma)^T \mathbb{E} \Bigg[ \frac{\gamma(1-\tau)}{\tau} \left( F(y_T) - F(x^*) \right) + \frac{1}{2} \|z_T - x^*\|^2 \Bigg] \notag \\
\le & \frac{\gamma(1-\tau)}{\tau} \left(F(y_0) - F(x^*)\right) + \frac{1}{2} \|z_0 - x^*\|^2 + \sum_{k=0}^{T-1} (1+\mu \gamma)^k \mathbb{E} \Big[ \gamma^2 \|\widetilde{\nabla}_{k+1} - \nabla f(x_{k+1})\|^2 \\
& + \frac{\gamma}{\tau} \left( \frac{L}{2} - \frac{1}{4 \tau \gamma} \right) \|x_{k+1} - y_{k+1}\|^2 - \frac{\gamma(1-\tau)}{\tau} D_f(y_k,x_{k+1}) + \gamma \left\langle \nabla f(x_{k+1}) - \widetilde{\nabla}_{k+1}, z_k - x^* \right\rangle \Big].
\end{align}
As in the proof of Theorem \ref{thm:main1}, we bound the bias term and the MSE using Lemmas \ref{lem:biasbound} and \ref{lem:msebound}, respectively. To apply Lemma \ref{lem:biasbound}, we let $\sigma_k = (1+\mu \gamma)^k$ and $s_k = \gamma$. These choices are appropriate because $(1 + \mu \gamma)^k (1-\rho) \le (1 + \mu \gamma)^{k-1} (1 - \frac{\rho}{2})$ due to the fact that $\mu \gamma \le \rho/2$.

Combining these bounds with \eqref{eq:thm2}, we have
\begin{align}
& (1+\mu \gamma)^T \mathbb{E} \Bigg[ \frac{\gamma(1-\tau)}{\tau} \left( F(y_T) - F(x^*) \right) + \frac{1}{2} \|z_T - x^*\|^2 \Bigg] \notag \\
\le & \frac{\gamma(1-\tau)}{\tau} \left(F(y_0) - F(x^*)\right) + \frac{1}{2} \|z_0 - x^*\|^2 \\
& + \sum_{k=0}^{T-1} (1+\mu \gamma)^k \mathbb{E} \Bigg[ \left( 8 \gamma^2 L \Theta_1 \Theta_2 - \frac{\gamma (1-\tau)}{\tau} \right) D(y_k,x_{k+1}) \\
& + \left( \frac{\rho_M (1-\rho_B)}{8 \tau^2} + 4 \gamma^2 L^2 \Theta_1 \Theta_2 + \frac{\gamma}{\tau} \left( \frac{L}{2} - \frac{1}{4 \tau \gamma} \right) \right) \|x_{k+1} - y_{k+1}\|^2 \Big].
\end{align}
The parameter settings in the theorem statement ensure the final two lines are non-positive (see Appendix \ref{app:nonpos} for details). This gives
\begin{align}
\frac{1}{2} \mathbb{E} \|z_T - x^*\|^2 & \le (1 + \mu \gamma)^{-T} \left( \frac{\gamma(1-\tau)}{\tau} \left( F(y_0) - F(x^*) \right) + \frac{1}{2} \|z_0 - x^*\|^2 \right) \\
& \le \left(1 + \min \left\{ \sqrt{\frac{\mu}{L c}}, \frac{\rho}{2} \right\} \right)^{-T} \left( \frac{1}{\mu} \left(F(y_0) - F(x^*)\right) + \frac{1}{2} \|z_0 - x^*\|^2 \right),
\end{align}
which is the desired result.
\end{proof}

\renewcommand{\proofname}{Proof}

\section{Convergence Rates for Specific Estimators}
\label{sec:convrates}

In light of Theorems \ref{thm:main1} and \ref{thm:main2}, we must only establish suitable bounds on the MSE and bias terms of a gradient estimator to prove accelerated convergence rates for Algorithm \ref{alg:1}. We consider four variance-reduced gradient estimators: SAGA, SVRG, SARAH, and SARGE, beginning with the unbiased estimators. We defer proofs to the Appendix. To preserve the generality of our framework, we have not optimised the constants appearing in the presented convergence rates.

\begin{theorem}[SAGA Convergence Rates]
\label{thm:saga}
    When using the SAGA gradient estimator in Algorithm \ref{alg:1}, set $b \le 4 \sqrt{2} n^{2/3}$, $\gamma_k = \frac{b^3 (k+\frac{4 n}{b}+4)}{192 n^2 L}$, and $\tau_k = \frac{b^3}{96 n^2 L \gamma_k}$. After $T$ iterations, the suboptimalty at $y_T$ satisfies
    \begin{align}
        \mathbb{E} F(y_T) - F(x^*) \le \frac{(\frac{4 n}{b}+2)(\frac{4 n}{b}+4) K_1}{(T + \frac{4 n}{b} + 3)^2},
    \end{align}
    where
    \begin{equation}
        K_1 = \left( F(y_0) - F(x^*) + \frac{192 n^2 L}{b^3(\frac{4 n}{b}+2)(\frac{4 n}{b}+4)} \|z_0 - x^*\|^2 \right).
    \end{equation}
    If $g$ is $\mu$-strongly convex, set $\gamma = \min \left\{ \frac{b^{3/2}}{4 n \sqrt{6 \mu L}}, \frac{b}{4 n \mu} \right\}$ and $\tau = \mu \gamma$. After $T$ iterations, the point $z_T$ satisfies
    \begin{align}
        \mathbb{E} \|z_T - x^*\|^2 \le \left(1+ \min \left\{ \frac{b^{3/2} \sqrt{\mu}}{4 n \sqrt{6 L}}, \frac{b}{4 n} \right\} \right)^{-T} K_2,
    \end{align}
    where $K_2$ is defined as in Theorem \ref{thm:main2}.
\end{theorem}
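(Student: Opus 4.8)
The plan is to specialise the two master results, Theorems~\ref{thm:main1} and~\ref{thm:main2}, to the SAGA estimator by substituting its MSEB parameters from Proposition~\ref{prop:saga}. First I would record the exact constants underlying Proposition~\ref{prop:saga} (established in Appendix~\ref{sec:saga}), namely $M_2 = 0$, $\rho_B = \rho_F = 1$, together with explicit constants giving $M_1 = \mathcal{O}(n/b^2)$ and $\rho_M = \mathcal{O}(b/n)$. Feeding these into the definitions of $\Theta_1$, $\Theta_2$, and $\rho$ produces dramatic simplifications: because $\rho_B = 1$ the factor $1-\rho_B$ vanishes and $\Theta_1 = 1$; because $M_2 = 0$ and $\rho_F = 1$ we get $\Theta_2 = M_1/\rho_M$; and since $\rho_M \le \rho_B = \rho_F = 1$ we have $\rho = \rho_M = \mathcal{O}(b/n)$. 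Evaluating $\Theta_2 = M_1/\rho_M = \mathcal{O}(n^2/b^3)$, the exact SAGA constants give $16\Theta_1\Theta_2 = 96\,n^2/b^3$.

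Next I would check that the parameters prescribed in the statement are precisely the master-theorem choices with $c = 96 n^2/b^3$ and $\nu = 4n/b$. Indeed, $\gamma_k = (k+\nu+4)/(2cL)$ reduces to $\gamma_k = b^3(k + 4n/b + 4)/(192 n^2 L)$ and $\tau_k = 1/(cL\gamma_k)$ to $\tau_k = b^3/(96 n^2 L\gamma_k)$, matching the statement; likewise in the strongly convex case $\gamma = \min\{(\mu c L)^{-1/2}, \rho/(2\mu)\}$ and $\tau = \mu\gamma$ reproduce the stated $\gamma = \min\{b^{3/2}/(4n\sqrt{6\mu L}),\, b/(4n\mu)\}$ using $\sqrt{96} = 4\sqrt{6}$ and $\rho = b/(2n)$. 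With $c$ and $\nu$ identified, both displayed rates follow by direct substitution into the conclusions of Theorems~\ref{thm:main1} and~\ref{thm:main2}: the prefactor $(\nu+2)(\nu+4)/(T+\nu+3)^2$ becomes $(4n/b+2)(4n/b+4)/(T+4n/b+3)^2$, the constant $2cL$ inside $K_1$ becomes $192 n^2 L/b^3$, and the contraction factor $\min\{\sqrt{\mu/(Lc)}, \rho/2\}$ becomes $\min\{b^{3/2}\sqrt{\mu}/(4n\sqrt{6L}),\, b/(4n)\}$.

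The one genuinely non-mechanical step --- and the main obstacle --- is verifying that $c = 96 n^2/b^3$ is an \emph{admissible} choice, i.e.\ that it meets the requirement $c \ge \max\{\,2(1+\sqrt{1 + 8\Theta_1\Theta_2(2-\rho_M+\rho_B\rho_M)})/(2-\rho_M+\rho_B\rho_M),\; 16\Theta_1\Theta_2\,\}$ from Theorem~\ref{thm:main1}. Since $\rho_B = 1$ gives $2 - \rho_M + \rho_B\rho_M = 2$, the first entry of the maximum collapses to $1 + \sqrt{1 + 16\Theta_2}$, so the clean choice $c = 16\Theta_1\Theta_2$ is valid exactly when $16\Theta_2 \ge 1 + \sqrt{1+16\Theta_2}$, equivalently when $16\Theta_2$ exceeds a fixed threshold. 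Because $16\Theta_2 = 96 n^2/b^3$ is \emph{decreasing} in $b$, this threshold translates into an upper bound on $b$, and tracking the exact constants is what yields the hypothesis $b \le 4\sqrt{2}\,n^{2/3}$. I would also confirm the mild side condition $\nu \ge \max\{0, (2-6\rho)/\rho\}$: with $\nu = 4n/b$ and $\rho = b/(2n)$ one has $(2-6\rho)/\rho = 4n/b - 6 \le \nu$, so it holds automatically. Everything downstream --- the non-positivity of the quadratic and Bregman terms --- is inherited from the already-proved master theorems, so once admissibility is settled the result is immediate.
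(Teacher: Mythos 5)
Your proposal is correct and follows exactly the paper's route: Appendix C establishes the MSEB constants $M_1 = 3n/b^2$, $\rho_M = b/(2n)$, $M_2 = 0$, $\rho_B = \rho_F = 1$, whence $\Theta_1 = 1$, $\Theta_2 = 6n^2/b^3$, and the paper then invokes Theorems \ref{thm:main1} and \ref{thm:main2} with $c = 16\Theta_1\Theta_2 = 96n^2/b^3$, $\nu = 4n/b$, and $\rho = b/(2n)$, precisely as you describe. The only caveat is that the admissibility check $16\Theta_2 \ge 1+\sqrt{1+16\Theta_2}$ actually requires $b^3 \le 32 n^2$, i.e.\ $b \le 32^{1/3} n^{2/3}$, rather than the stated $b \le 4\sqrt{2}\,n^{2/3} = \sqrt{32}\,n^{2/3}$ --- a constant-tracking discrepancy that is present in the paper itself and not introduced by your argument.
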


It is enlightening to compare these rates to existing convergence rates for full and stochastic gradient methods. In the non-strongly convex setting, our convergence rate is $\mathcal{O}\left(n^2/T^2\right)$, matching that of Katyusha. As with Katyusha, this rate could be improved for SVRG using the epoch-doubling procedure in SVRG++ (see Allen-Zhu, 2018 \cite{svrgplusplus} for further details). In the strongly convex case, if $F$ is poorly conditioned so that $L/\mu \ge \mathcal{O}( b )$, we prove linear convergence at the rate $\mathcal{O} \left( \left(1 + \frac{b^{3/2} \sqrt{\mu}}{n \sqrt{L}} \right)^{-T} \right)$. With $b = n^{2/3}$, this rate matches the convergence rate of inertial forward-backward on the same problem (i.e., the rate is independent of $n$), but we require only $n^{2/3}$ stochastic gradient evaluations per iteration compared to the $n$ evaluations that full gradient methods require. This is reminiscent of the results of \cite{reddi,zhunoncon}, where the authors show that SAGA and SVRG achieve the same convergence rate as full gradient methods on non-convex problems using only $n^{2/3}$ stochastic gradient evaluations per iteration. This is slightly worse than the results proven for Katyusha, which requires $\mathcal{O}(\sqrt{n})$ stochastic gradient evaluation per iteration to match the convergence rate of full-gradient methods.

The analogous convergence guarantees for SVRG are included in Theorem~\ref{thm:svrg}.

\begin{theorem}[SVRG Convergence Rates]
\label{thm:svrg}
    When using the SVRG gradient estimator in Algorithm \ref{alg:1}, set $b \le 32 p^2$, $\gamma_k = \frac{b (k+4 p+4)}{192 p^2 L}$, and $\tau_k = \frac{b}{ 96 p^2 L \gamma_k}$. After $T$ iterations, the suboptimalty at $y_T$ satisfies
    \begin{align}
        \mathbb{E} F(y_T) - F(x^*) \le \frac{(4 p+2)(4 p+4) K_1}{(T + 4 p + 3)^2},
    \end{align}
    where
    \begin{equation}
        K_1 = \left( F(y_0) - F(x^*) + \frac{192 p^2 L}{b (4 p+2)(4 p+4)} \|z_0 - x^*\|^2 \right).
    \end{equation}
    If $g$ is $\mu$-strongly convex, set $\gamma = \min \left\{ \frac{\sqrt{b}}{4 p \sqrt{6 \mu L}}, \frac{1}{4 p \mu} \right\}$ and $\tau = \mu \gamma$. After $T$ iterations, the point $z_T$ satisfies
    \begin{align}
        \mathbb{E} \|z_T - x^*\|^2 \le \left(1 + \min \left\{ \frac{\sqrt{b \mu}}{4 p \sqrt{6 L}}, \frac{b}{4 p} \right\} \right)^{-T} K_2,
    \end{align}
    where $K_2$ is defined as in Theorem \ref{thm:main2}.
\end{theorem}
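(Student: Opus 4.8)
The plan is to obtain Theorem~\ref{thm:svrg} as a direct specialisation of Theorems~\ref{thm:main1} and~\ref{thm:main2}, once the MSEB constants of the SVRG estimator \eqref{eq:svrg} are known as functions of $p$ and $b$. Proposition~\ref{prop:saga} records these constants only for the particular choice $p = \mathcal{O}(n/b)$, so the first step is to establish the general-$p$ version. The SVRG estimator is unbiased, giving $\rho_B = 1$ (so that $\nabla f(x_{k+1}) - \mathbb{E}_k \widetilde{\nabla}_{k+1} = 0$), and its variance admits a Markovian bound, giving $M_2 = 0$ and $\rho_F = 1$. The substantive part is the recursion for $\mathcal{M}_k$: I would bound $\mathbb{E}\|\widetilde{\nabla}_{k+1} - \nabla f(x_{k+1})\|^2$ by $\tfrac{1}{b}\tfrac{1}{n}\sum_i \mathbb{E}\|\nabla f_i(x_{k+1}) - \nabla f_i(\widetilde{x})\|^2$, condition on whether the reference point was refreshed (probability $1/p$), and on the no-refresh event apply the weighted split $\|\nabla f_i(x_{k+1}) - \nabla f_i(\widetilde{x})\|^2 \le (1+\epsilon)\|\nabla f_i(x_k) - \nabla f_i(\widetilde{x})\|^2 + (1+\epsilon^{-1})\|\nabla f_i(x_{k+1}) - \nabla f_i(x_k)\|^2$ with $\epsilon = \Theta(1/p)$. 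This produces the MSEB bound \eqref{eq:mseb} with $M_1 = \Theta(p/b)$ and $\rho_M = \Theta(1/p)$; choosing the slack constants so that $M_1 = 3p/b$ and $\rho_M = 1/(2p)$ is what makes the downstream arithmetic match the stated step sizes.

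With these constants the remainder is bookkeeping. Since $\rho_B = 1$ we get $\Theta_1 = 1 + 8(1-\rho_B)/(\rho_B^2 \rho_M) = 1$, and since $M_2 = 0$, $\rho_F = 1$ we get $\Theta_2 = M_1/\rho_M = 6p^2/b$ and $\rho = \min\{\rho_M,\rho_B,\rho_F\} = \rho_M = 1/(2p)$. Because $\rho_B = 1$, the quantity $2 - \rho_M + \rho_B\rho_M$ collapses to $2$, so the lower bound on $c$ in Theorem~\ref{thm:main1} simplifies to $c \ge \max\{1 + \sqrt{1 + 16\Theta_2},\, 16\Theta_2\}$. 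I would then verify that the hypothesis $b \le 32p^2$ is exactly what makes the term $16\Theta_2 = 96p^2/b$ dominate: the inequality $16\Theta_2 \ge 1 + \sqrt{1 + 16\Theta_2}$ is equivalent to $16\Theta_2 \ge 3$, i.e. to $6p^2/b \ge 3/16$, i.e. to $b \le 32p^2$. Hence $c = 96p^2/b$ is admissible. I would also set $\nu = 4p$ and check the requirement $\nu \ge (2 - 6\rho)/\rho = 4p - 6$, which holds trivially.

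Substituting $c = 96p^2/b$ and $\nu = 4p$ into the parameter formulas of Theorem~\ref{thm:main1} reproduces the step size $\gamma_k = (k+\nu+4)/(2cL) = b(k + 4p + 4)/(192 p^2 L)$ and momentum $\tau_k = 1/(cL\gamma_k) = b/(96 p^2 L \gamma_k)$, and the resulting suboptimality bound is $K_1(\nu+2)(\nu+4)/(T+\nu+3)^2$ with $2cL/[(\nu+2)(\nu+4)] = 192 p^2 L / [b(4p+2)(4p+4)]$, matching the displayed $K_1$. For the strongly convex claim I would invoke Theorem~\ref{thm:main2} with the same $c,\nu$: the prescribed step $\gamma = \min\{1/\sqrt{\mu c L},\, \rho/(2\mu)\}$ becomes $\min\{\sqrt{b}/(4p\sqrt{6\mu L}),\, 1/(4p\mu)\}$ (using $\sqrt{96} = 4\sqrt{6}$ and $\rho = 1/(2p)$), and the contraction factor $\min\{\sqrt{\mu/(Lc)},\, \rho/2\} = \mu\gamma$ yields the stated linear rate.

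The only genuinely technical step is establishing the general-$p$ MSEB recursion for $\mathcal{M}_k$; everything after that is substitution. The main obstacle there is that the reference point $\widetilde{x}$ is non-Markovian and the weighted triangle inequality introduces a factor that must be absorbed into $1 - \rho_M$ without pushing it above $1$, which is precisely what couples the slack $\epsilon$, the refresh probability $1/p$, and the resulting constants $M_1 = 3p/b$ and $\rho_M = 1/(2p)$. Matching these to the clean threshold $b \le 32p^2$ is the one place where constants must be tracked explicitly rather than absorbed into $\mathcal{O}(\cdot)$ notation.
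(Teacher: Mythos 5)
Your proposal is correct and follows essentially the same route as the paper: Corollary~\ref{lem:svrgvarbound} establishes the general-$p$ MSEB constants $M_1 = 3p/b$, $\rho_M = 1/(2p)$, $M_2 = 0$, $\rho_B = \rho_F = 1$ via exactly the variance decomposition and weighted triangle inequality you describe, after which the theorem is the substitution $c = 96p^2/b$, $\nu = 4p$ into Theorems~\ref{thm:main1} and~\ref{thm:main2} (the appendix's ``$c = \tfrac{b}{96p^2}$'' is a typo for its reciprocal, as your value is the one consistent with the stated $\gamma_k$). Your explicit check that $b \le 32p^2$ is precisely the condition making $16\Theta_1\Theta_2$ dominate the max defining $c$ is a detail the paper asserts without verification, and is a welcome addition.
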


The convergence rates for SVRG are similar to the rates for SAGA if $p$ and $b$ are chosen appropriately. In the strongly convex case, setting $b = p^2$ allows SVRG to match the convergence rate of full gradient methods, and the expected number of stochastic gradient evaluations per iteration is $n/p + b$. To minimise the number of stochastic gradient evaluations while maintaining the convergence rate of full gradient methods, we set $p = \mathcal{O}(n^{1/3})$, showing that Algorithm \ref{alg:1} using the SVRG gradient estimator achieves the same convergence rate as full gradient methods using only $\mathcal{O}(n^{2/3})$ stochastic gradient evaluations per iteration.

\begin{remark}
    The above discussion shows that when using the SAGA gradient estimator on a strongly convex objective with $b = \mathcal{O}(n^{2/3})$ and $\gamma = \mathcal{O}(1 / \sqrt{\mu L} )$, Algorithm \ref{alg:1} finds a point satisfying $\mathbb{E} \|z_T - x^*\|^2 \le \epsilon$ in $\mathcal{O}(n^{2/3} \sqrt{\kappa} \log(1 / \epsilon))$ iterations. This is compared to the complexity lower bound of $\mathcal{O}(\sqrt{n \kappa} \log(1 / \epsilon) )$ achieved by Katyusha \cite{srebrocomplexity}. SVRG has a similar complexity when $b = n^{2/3}$ and $p = n^{1/3}$.
\end{remark}

The SARAH gradient estimator is similar to the SVRG estimator, as both estimators require the full gradient to be computed periodically. SARAH differs from SVRG by using previous estimates of the gradient to inform future estimates. The recursive nature of the estimator seems to decrease its MSE, which can be observed in experiments and in theory \cite{sarah,techrepo}. However, this comes at the cost of introducing bias into the estimator.

Biased stochastic gradient methods are underdeveloped compared to their unbiased counterparts. The convergence proofs for biased algorithms are traditionally complex and difficult to generalize (see \cite{SAG}, for example), and proximal support has only recently been extended to the biased algorithms SARAH and SARGE, as well as biased versions of SAGA and SVRG in the convex setting \cite{techrepo}. It is difficult to determine conclusively if the negative effect of the bias outweighs the benefits of a lower MSE. We show that Algorithm \ref{alg:1} is able to achieve accelerated rates of convergence using biased estimators as well, beginning with the SARAH estimator.

\begin{theorem}[SARAH Convergence Rates]
\label{thm:sarah}
When using the SARAH gradient estimator in Algorithm \ref{alg:1}, set $\gamma_k = \frac{k + 2 p + 4}{288 p^4 L}$, and $\tau_k = \frac{1}{144 p^4 L \gamma_k}$. After $T$ iterations, the suboptimalty at $y_T$ satisfies
\begin{align}
    \mathbb{E} F(y_T) - F(x^*) \notag \le \frac{(2 p + 2)(2 p + 4) K_1}{(T + 2 p + 3)^2}.
\end{align}
where
\begin{equation}
    K_1 = \left( F(y_0) - F(x^*) + \frac{288 p^4 L}{(2 p + 2)(2 p + 4)} \|z_0 - x^*\|^2 \right).
\end{equation}
If $g$ is $\mu$-strongly convex, set $\gamma = \min \left\{ \sqrt{ \frac{1}{144 p^4 \mu L}}, \frac{1}{2 p \mu} \right\}$ and $\tau = \mu \gamma$. After $T$ iterations, the point $z_T$ satisfies
\begin{align}
    \mathbb{E} \|z_T - x^*\|^2 \le \left(1 + \min \left\{ \sqrt{\frac{\mu}{144 p^4 L}}, \frac{1}{2 p} \right\} \right)^{-T} K_2,
\end{align}
where $K_2$ is defined as in Theorem \ref{thm:main2}.
\end{theorem}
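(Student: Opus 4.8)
The plan is to treat Theorem \ref{thm:sarah} as a direct specialisation of the two master results. Since the SARAH estimator satisfies the MSEB property by Proposition \ref{prop:sarah}, nothing new about the algorithm needs to be proved: the entire task is to insert SARAH's explicit MSEB constants into Theorems \ref{thm:main1} and \ref{thm:main2}, check that the step sizes and momentum parameters prescribed in the statement correspond to \emph{admissible} choices of $c$ and $\nu$, and then read off the resulting bounds.

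First I would record the explicit constants hidden behind the $\mathcal{O}$-notation of Proposition \ref{prop:sarah} (established in Appendix \ref{sec:sarah}), taking $p = \mathcal{O}(n)$. Because $\rho_F = 1$ and $M_2 = 0$, the auxiliary quantity collapses to $\Theta_2 = M_1/\rho_M$, while $\Theta_1 = 1 + 8(1-\rho_B)/(\rho_B^2 \rho_M)$ is controlled by the two small rates $\rho_B,\rho_M = \Theta(1/p)$; in particular $\rho = \min\{\rho_M,\rho_B,\rho_F\} = 1/p$ and $16\,\Theta_1\Theta_2 = \mathcal{O}(p^4)$. Matching the prescribed $\gamma_k = (k+2p+4)/(288 p^4 L)$ against the generic $\gamma_k = (k+\nu+4)/(2cL)$ forces $c = 144 p^4$ and $\nu = 2p$, and the stated $\tau_k = 1/(144 p^4 L \gamma_k)$ is precisely $\tau_k = 1/(cL\gamma_k)$. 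Admissibility of $\nu$ is immediate, since $\nu = 2p \ge 2p - 6 = (2-6\rho)/\rho$; for $c$ the binding requirement is $c \ge 16\,\Theta_1\Theta_2$ (the other branch of the max behaves like $4\sqrt{\Theta_1\Theta_2}$ and is dominated once $\Theta_1\Theta_2$ exceeds a small constant). Substituting $c = 144 p^4$ and $\nu = 2p$ into the bound $\mathbb{E}F(y_T) - F(x^*) \le K_1(\nu+2)(\nu+4)/(T+\nu+3)^2$ of Theorem \ref{thm:main1}, with $K_1 = F(y_0)-F(x^*) + \frac{2cL}{(\nu+2)(\nu+4)}\|z_0-x^*\|^2$, then reproduces the non-strongly-convex rate verbatim, since $2cL = 288 p^4 L$ and $(\nu+2)(\nu+4) = (2p+2)(2p+4)$.

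For the strongly convex case I would invoke Theorem \ref{thm:main2} with the identical $c$, $\nu$, $\Theta_1$, $\Theta_2$, and $\rho = 1/p$. The prescription $\gamma = \min\{1/\sqrt{\mu c L},\, \rho/(2\mu)\}$ becomes $\min\{1/(12 p^2\sqrt{\mu L}),\, 1/(2p\mu)\} = \min\{\sqrt{1/(144 p^4 \mu L)},\, 1/(2p\mu)\}$, matching the statement, and $\tau = \mu\gamma$ is unchanged. The contraction factor $1 + \min\{\sqrt{\mu/(Lc)},\, \rho/2\}$ then simplifies to $1 + \min\{\sqrt{\mu/(144 p^4 L)},\, 1/(2p)\}$, giving the claimed linear rate with $K_2$ inherited directly from Theorem \ref{thm:main2}.

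The main obstacle I anticipate is bookkeeping rather than conceptual: one must extract sufficiently tight explicit values of $M_1$, $\rho_M$, and $\rho_B$ from the proof of Proposition \ref{prop:sarah} so that the single numeric inequality $16\,\Theta_1\Theta_2 \le 144 p^4$ holds with the stated constant. Because the dependence on $p$ is quartic and $\Theta_1$ alone scales like $1/(\rho_B^2\rho_M) = \mathcal{O}(p^3)$, there is little slack, so an off-by-a-constant estimate of the MSEB parameters would break the match; everything downstream is routine substitution into the already-proved bounds of Theorems \ref{thm:main1} and \ref{thm:main2}.
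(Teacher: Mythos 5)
Your proposal is correct and follows exactly the paper's own route: the paper proves Lemma \ref{lem:sarahmseb} giving the explicit MSEB constants $M_1 = 1$, $M_2 = 0$, $\rho_M = \rho_B = 1/p$, $\rho_F = 1$, and then obtains Theorem \ref{thm:sarah} by substituting $c = 144p^4$, $\nu = 2p$, and $\rho = 1/p$ into Theorems \ref{thm:main1} and \ref{thm:main2}, just as you describe. Your admissibility checks (in particular $16\,\Theta_1\Theta_2 = 16p(1 + 8p^3 - 8p^2) \le 144p^4$) are the same bookkeeping the paper relies on.
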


We provide a proof of this result in Appendix \ref{sec:sarah}. Theorem \ref{thm:sarah} shows that using the SARAH gradient estimator in Algorithm \ref{alg:1} achieves an optimal $\mathcal{O}\left( 1 / T^2 \right)$ convergence rate on convex objectives, but with $p = \mathcal{O}(n)$, the constant is a factor of $n^2$ worse than it is for accelerated SAGA, SVRG, and Katyusha. In the strongly convex case, setting $p = \mathcal{O} (n)$ and $b = \mathcal{O}(1)$ guarantees a linear convergence rate of $\mathcal{O} ( (1 + n^{-2} \sqrt{\mu/L} )^{-T} )$, achieving the optimal dependence on the condition number, but with a constant that is a factor of $n$ worse than accelerated SAGA and SVRG, and a factor of $n^{3/2}$ worse than Katyusha. Despite this dependence on $n$, experimental results, including those in Section \ref{sec:experiments} and \cite{sarah}, show that the SARAH gradient estimator exhibits competitive performance.

Finally, we provide convergence rates for the SARGE estimator. In \cite{techrepo}, the authors introduce the SARGE gradient estimator to mimic the recursive nature of SARAH but trade larger storage costs for a lower average per-iteration complexity, similar to the relationship between SAGA and SVRG. We prove in Appendix \ref{sec:sarge} that SARGE satisfies the MSEB property with similar constants to SARAH, and achieves similar convergence rates as well.

\begin{theorem}[SARGE Convergence Rates]
\label{thm:sarge}
    Let\footnote{Throughout this manuscript, we have sacrificed smaller constants for generality and ease of exposition, so the constant appearing in $c$ is not optimal.} $c = 86016 n^4/b^4$. 
    When using the SARGE gradient estimator in Algorithm \ref{alg:1}, set $\gamma_k = \frac{k+\frac{4 n}{b}+4}{2 c L}$ and $\tau_k = \frac{1}{c L \gamma_k}$. After $T$ iterations, the suboptimalty at $y_T$ satisfies
    \begin{align}
        \mathbb{E} F(y_T) - F(x^*) \le \frac{2 (\frac{2 n}{b} + 1)( \frac{2 n}{b} + 2) K_1}{(T + \frac{4 n}{b} + 3)^2},
    \end{align}
    where
    \begin{equation}
        K_1 = \left( F(y_0) - F(x^*) + \frac{86016 n^4}{b^4 (\frac{2 n}{b} + 1)( \frac{2 n}{b} + 2)} \|z_0 - x^*\|^2 \right).
    \end{equation}
    If $g$ is $\mu$-strongly convex, set $\gamma = \min \left\{ \frac{1}{\sqrt{c \mu L}}, \frac{b}{4 n \mu} \right\}$ and $\tau = \mu \gamma$. After $T$ iterations, the point $z_T$ satisfies
    \begin{align}
        \mathbb{E} \|z_T - x^*\|^2 \le \left(1+ \min \left\{ \frac{48 b^2 \sqrt{154 \mu}}{n^2 \sqrt{L}}, \frac{b}{4 n} \right\} \right)^{-T} K_2,
    \end{align}
    where $K_2$ is defined as in Theorem \ref{thm:main2}.
\end{theorem}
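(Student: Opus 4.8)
The plan is to obtain Theorem \ref{thm:sarge} as a direct corollary of the general convergence results, Theorems \ref{thm:main1} and \ref{thm:main2}, in exactly the same way as the analogous statements for SAGA, SVRG, and SARAH. Since those theorems already guarantee accelerated rates for \emph{any} estimator satisfying the MSEB property, the only genuinely estimator-specific input is Proposition \ref{prop:sarge}, which records the MSEB constants of the SARGE estimator \eqref{eq:sarge}. Thus the proof reduces to (i) pinning down the explicit (not merely order-of-magnitude) values of $M_1, M_2, \rho_M, \rho_B, \rho_F$ promised by Proposition \ref{prop:sarge}, (ii) substituting them into the definitions of $\Theta_1$, $\Theta_2$, and $\rho = \min\{\rho_M,\rho_B,\rho_F\}$, and (iii) checking that the prescribed $c = 86016\,n^4/b^4$ together with the choice $\nu = 4n/b$ meets the admissibility conditions of Theorem \ref{thm:main1}, after which both convergence bounds follow by substitution.

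The constant-chasing step is where I would spend most of the (routine) effort. From Proposition \ref{prop:sarge} the three decay rates are all of order $b/n$; taking the explicit value $\rho = b/(2n)$ makes $\rho/2 = b/(4n)$, which is precisely the second entry of the minima appearing in the stated rates. The bias factor $\Theta_1 = 1 + 8(1-\rho_B)/(\rho_B^2\rho_M)$ then scales like $n^3/b^3$, while $\Theta_2 = (M_1\rho_F + 2M_2)/(\rho_M\rho_F)$ scales like $n/b$ once $M_1 = \mathcal{O}(1/n)$ and $M_2=\mathcal{O}(1/n^2)$ are inserted, so $\Theta_1\Theta_2 = \mathcal{O}(n^4/b^4)$. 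Consequently the binding constraint on $c$ is the term $16\,\Theta_1\Theta_2$, and I would verify that $86016\,n^4/b^4$ dominates both entries of the max defining the admissible $c$, the numerical constant being chosen to absorb the suppressed factors. Likewise $\nu = 4n/b$ exceeds $(2-6\rho)/\rho = \mathcal{O}(n/b)$, so it is admissible.

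With $c$ and $\nu$ fixed, Theorem \ref{thm:main1} immediately produces the schedule $\gamma_k = (k+\nu+4)/(2cL) = (k + 4n/b + 4)/(2cL)$ and $\tau_k = 1/(cL\gamma_k)$ stated in the theorem, together with the bound $\mathbb{E}F(y_T) - F(x^*) \le K_1(\nu+2)(\nu+4)/(T+\nu+3)^2$; substituting $\nu = 4n/b$ and rewriting $(\nu+2)(\nu+4)$ in terms of $2n/b$ yields a bound of the claimed form, with $K_1$ inherited from the definition in Theorem \ref{thm:main1}. For the strongly convex case I would invoke Theorem \ref{thm:main2} with the same $c$, $\Theta_1$, $\Theta_2$, and $\nu$, setting $\gamma = \min\{1/\sqrt{c\mu L},\, \rho/(2\mu)\}$ and $\tau = \mu\gamma$; the resulting rate $(1+\min\{\sqrt{\mu/(Lc)},\,\rho/2\})^{-T}$ becomes the advertised $(1+\min\{\,\cdot\,,\, b/(4n)\})^{-T}$ after inserting $c = 86016\,n^4/b^4$ and $\rho = b/(2n)$.

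The main obstacle is not in this corollary but upstream, in the proof of Proposition \ref{prop:sarge} (Appendix \ref{sec:sarge}). Unlike SAGA and SVRG, the SARGE estimator \eqref{eq:sarge} is biased and does not satisfy a Markovian variance bound, so one must establish both the bias-decay identity $\nabla f(x_{k+1}) - \E\widetilde{\nabla}_{k+1} = (1-\rho_B)(\nabla f(x_k) - \widetilde{\nabla}_k)$ and a genuinely non-Markovian MSE recursion of the form \eqref{eq:mseb} with $M_2 \neq 0$ and $\rho_F < 1$. Carefully tracking the recursive correction term in \eqref{eq:sarge} and the update rule for the stored gradients $\psi_k^j$ in order to extract the correct constants is the delicate part; once Proposition \ref{prop:sarge} is in hand, Theorem \ref{thm:sarge} is pure substitution.
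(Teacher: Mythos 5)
Your proposal follows the paper's proof exactly: the paper obtains Theorem \ref{thm:sarge} by plugging the explicit MSEB constants of Corollary \ref{cor:sargemseb} ($M_1 = 12$, $M_2 = (27+12b)/n^2$, $\rho_M = \rho_F = b/(2n)$, $\rho_B = b/n$, so $\rho = b/(2n)$) into Theorems \ref{thm:main1} and \ref{thm:main2} with $\nu = 4n/b$, and all of the real work lives upstream in Lemmas \ref{lem:sagaforsarge}--\ref{lem:sargerecurse}, just as you identify. One small arithmetic caveat: taking the literal $M_1 = \mathcal{O}(1/n)$ from Proposition \ref{prop:sarge} would give $\Theta_2 = \mathcal{O}(1/b)$ rather than the $\mathcal{O}(n/b)$ you state; it is the explicit value $M_1 = 12 = \mathcal{O}(1)$ from the corollary that yields $\Theta_2 = \mathcal{O}(n/b)$, $\Theta_1\Theta_2 = \mathcal{O}(n^4/b^4)$, and hence the stated $c$.
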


The convergence rates for SARGE are of the same order as the convergence rates for SARAH, even though SARGE requires fewer stochastic gradient evaluations per iteration on average.

Although our bound on the MSE of the SARAH and SARGE estimators is a factor of $n$ smaller than our bound on the MSE of the SAGA and SVRG estimators, the analytical difficulties due to the bias lead to a worse dependence on $n$. Nevertheless, SARAH and SARGE are competitive in practice, as we demonstrate in the following section.

\section{Numerical Experiments}
\label{sec:experiments}

\begin{figure}[!t]
\centering
\centering\captionsetup[subfloat]{labelfont=bf}
\subfloat[\texttt{australian}]{ \includegraphics[width=0.45\linewidth]{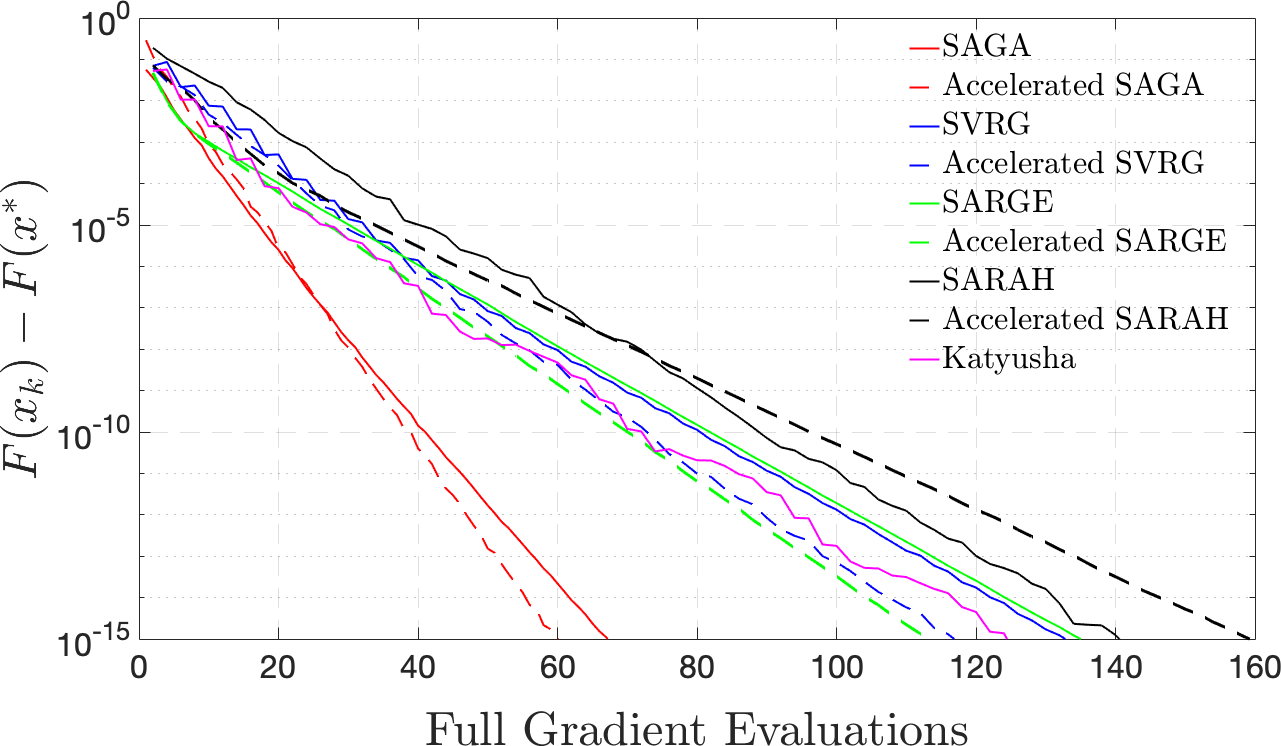} }  \hspace{8pt}
\subfloat[\texttt{mushrooms}]{ \includegraphics[width=0.45\linewidth]{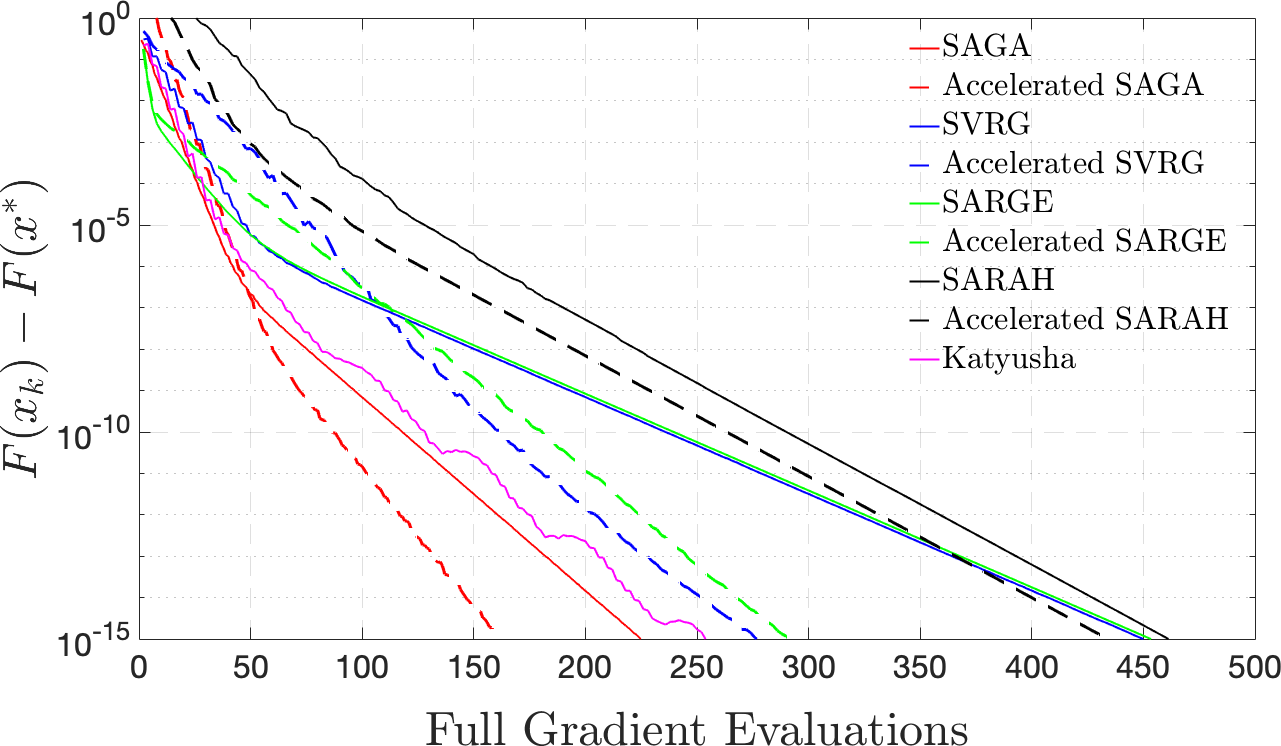} } \\ 
\subfloat[\texttt{phishing}]{ \includegraphics[width=0.45\linewidth]{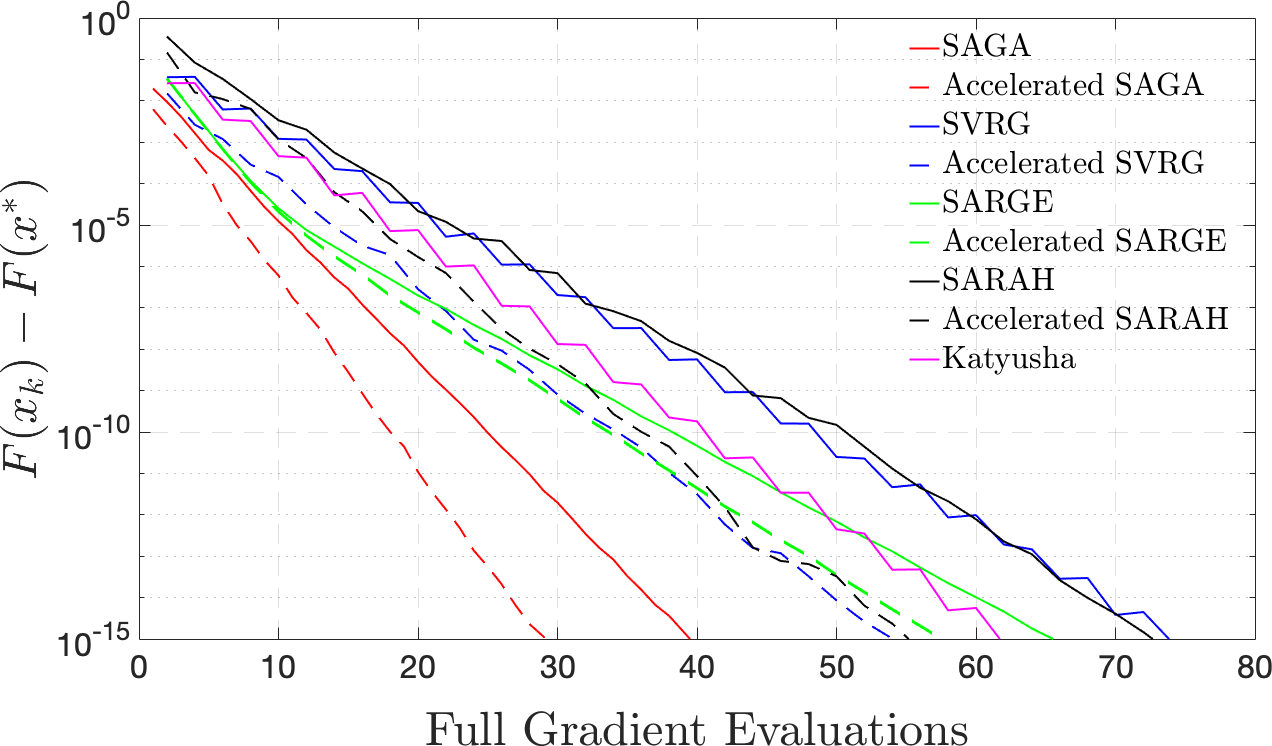} }   
\hspace{8pt}
\subfloat[\texttt{ijcnn1}]{ \includegraphics[width=0.45\linewidth]{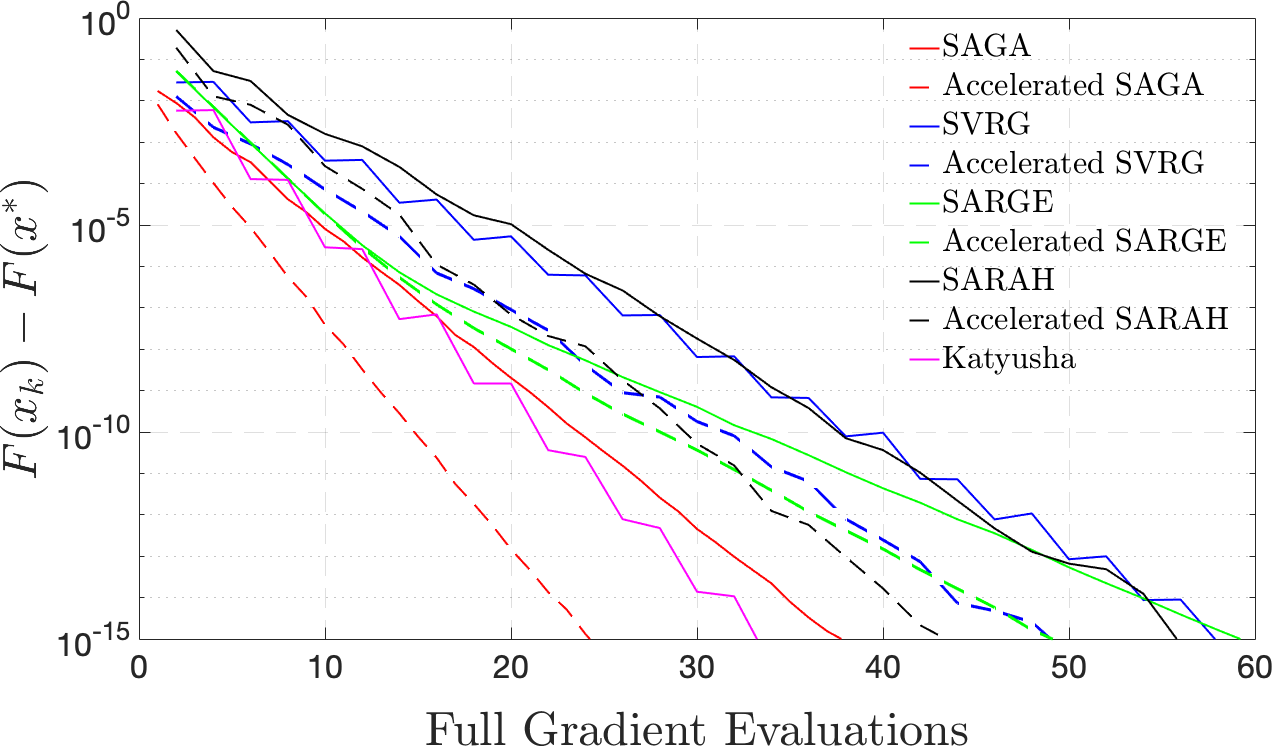} }  \\
%%%%%%%%
\caption{Performance comparison for solving ridge regression among different algorithms.}
\label{fig:ridge}
\end{figure}

To test our acceleration framework, we use it to accelerate SAGA, SVRG, SARAH, and SARGE on a series of ridge regression and LASSO tasks using the binary classification data sets \texttt{australian}, \texttt{mushrooms}, \texttt{phishing}, and \texttt{ijcnn1} from the LIBSVM\footnote{\url{https://www.csie.ntu.edu.tw/~cjlin/libsvmtools/datasets/}} database. We include Katyusha and Katyusha\textsuperscript{ns} for comparison as well. For SVRG and SARAH, we compare our accelerated variants that compute the full gradient probabilistically to the non-accelerated versions that compute the full gradient deterministically at the beginning of each epoch.

With feature vectors $a_i$ and labels $y_i$ for $i \in \{1,2,\cdots,n\}$, ridge regression and LASSO can be written as
\begin{equation}
    \min_{x \in \R^m} \quad \frac{1}{n} \sum_{i=1}^n (a_i^\top x - y_i)^2 + \lambda R(x),
\end{equation}
where $R \equiv \tfrac{1}{2} \|\cdot\|^2$ in ridge regression and $R \equiv \|\cdot\|_1$ for LASSO. Letting $g \equiv \lambda R$, it is clear that $g$ is $\lambda$-strongly convex in ridge regression and $g$ is not strongly convex for LASSO. In all our experiments, we rescale the value of the data to $[-1, 1]$. For ridge regression, we set $\lambda = 1 / n$, and for LASSO, we set $\lambda = 1 / \sqrt{n}$.

For accurate comparisons, we automate all our parameter tuning. For our experiments using ridge regression, we select the step size and momentum parameters from the set $\{ 1 / t : t \in \mathbb{N} \}$. For LASSO, we use the parameters suggested by Theorem \ref{thm:main1}, but we scale the step size by a constant $s \in \mathbb{N}$, and we rescale the momentum parameter so that $\tau_0 = 1 / 2$. We perform the same parameter-tuning procedure for Katyusha, and set the negative momentum parameter $\tau_2 = 1 / 2$ as suggested in \cite{katyusha} unless otherwise stated. In our accelerated variants of SVRG and SARAH, we set $p = \frac{1}{2 n}$, and for the non-accelerated variants and Katyusha, we set the epoch length to $2 n$. We use a batch size of $b = 1$ for all algorithms.

\begin{figure}[t]
\centering
\centering\captionsetup[subfloat]{labelfont=bf}
\subfloat[\texttt{australian}]{ \includegraphics[width=0.45\linewidth]{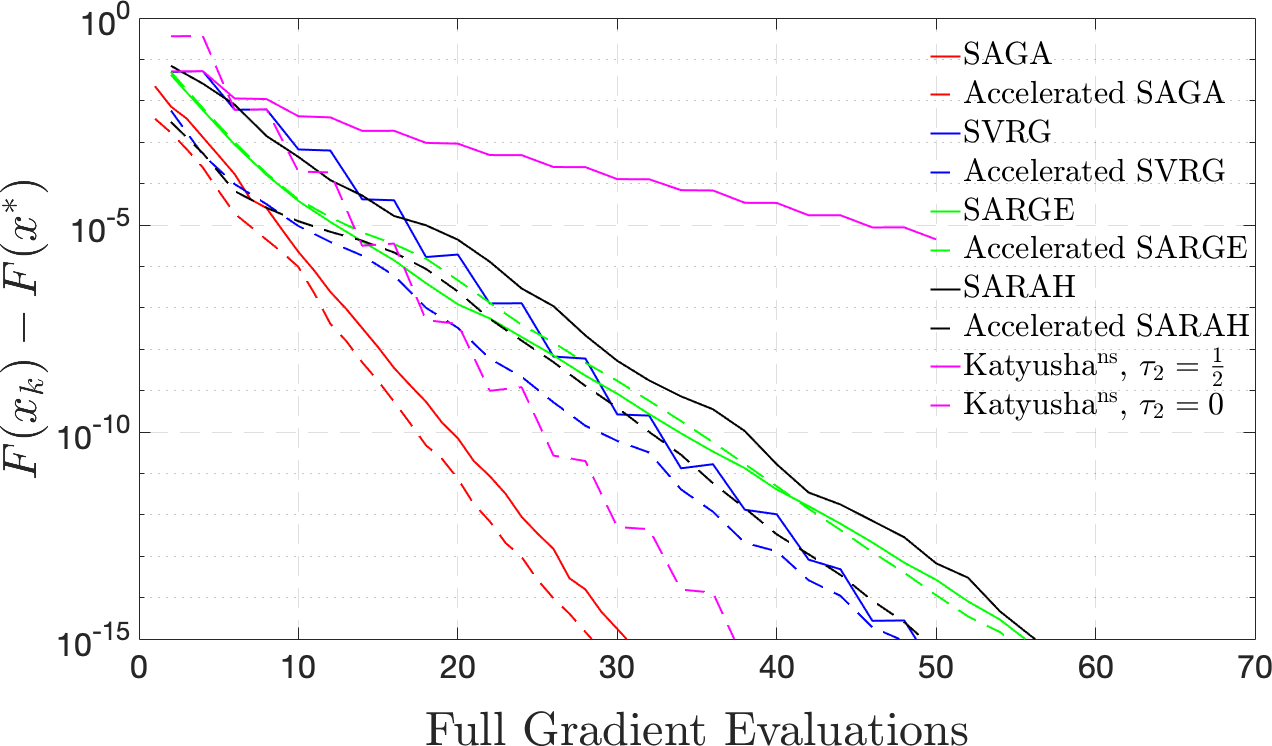} }  \hspace{8pt}
\subfloat[\texttt{mushrooms}]{ \includegraphics[width=0.45\linewidth]{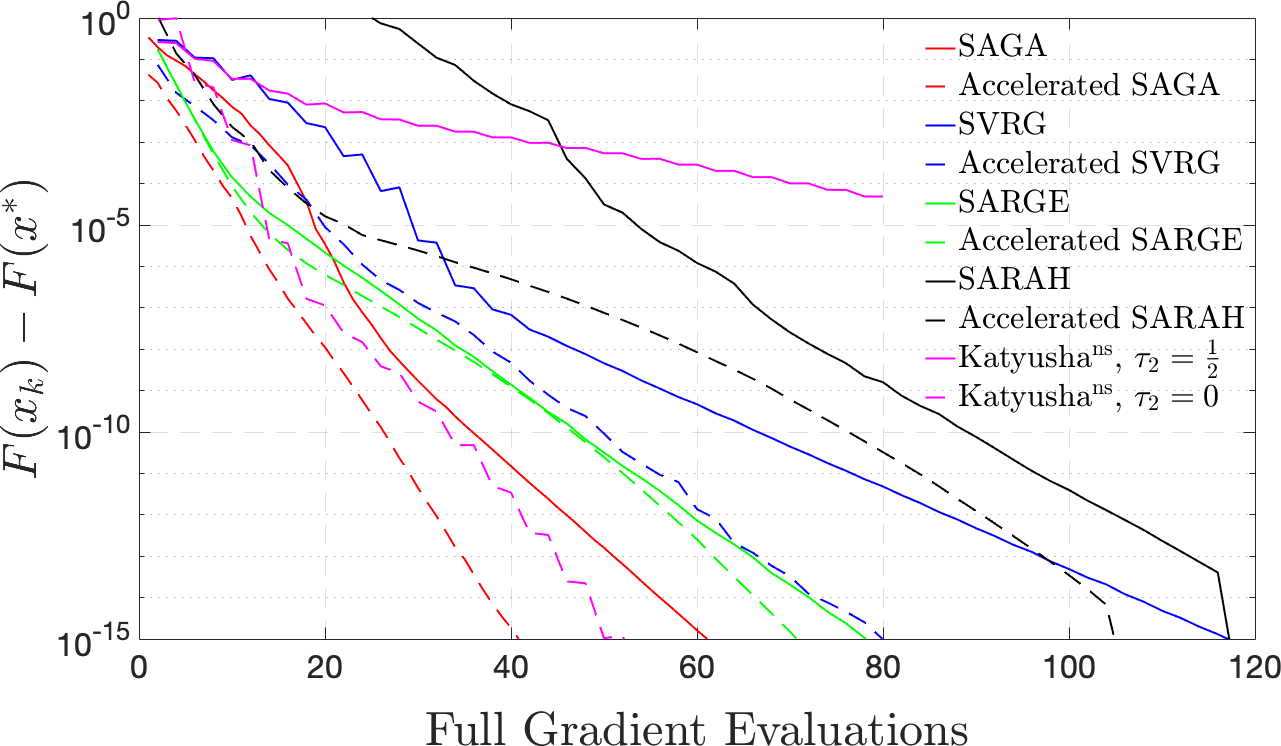} } \\ 
\subfloat[\texttt{phishing}]{ \includegraphics[width=0.45\linewidth]{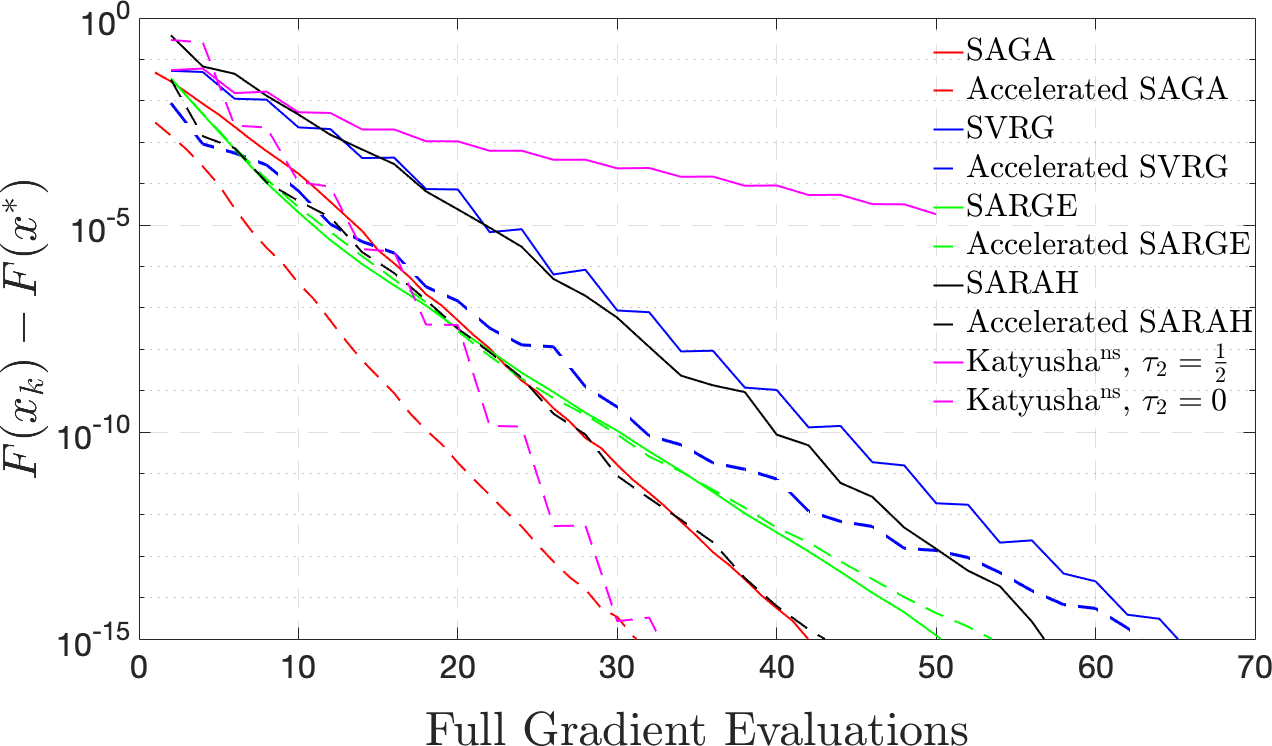} }   
\hspace{8pt}
\subfloat[\texttt{ijcnn1}]{ \includegraphics[width=0.45\linewidth]{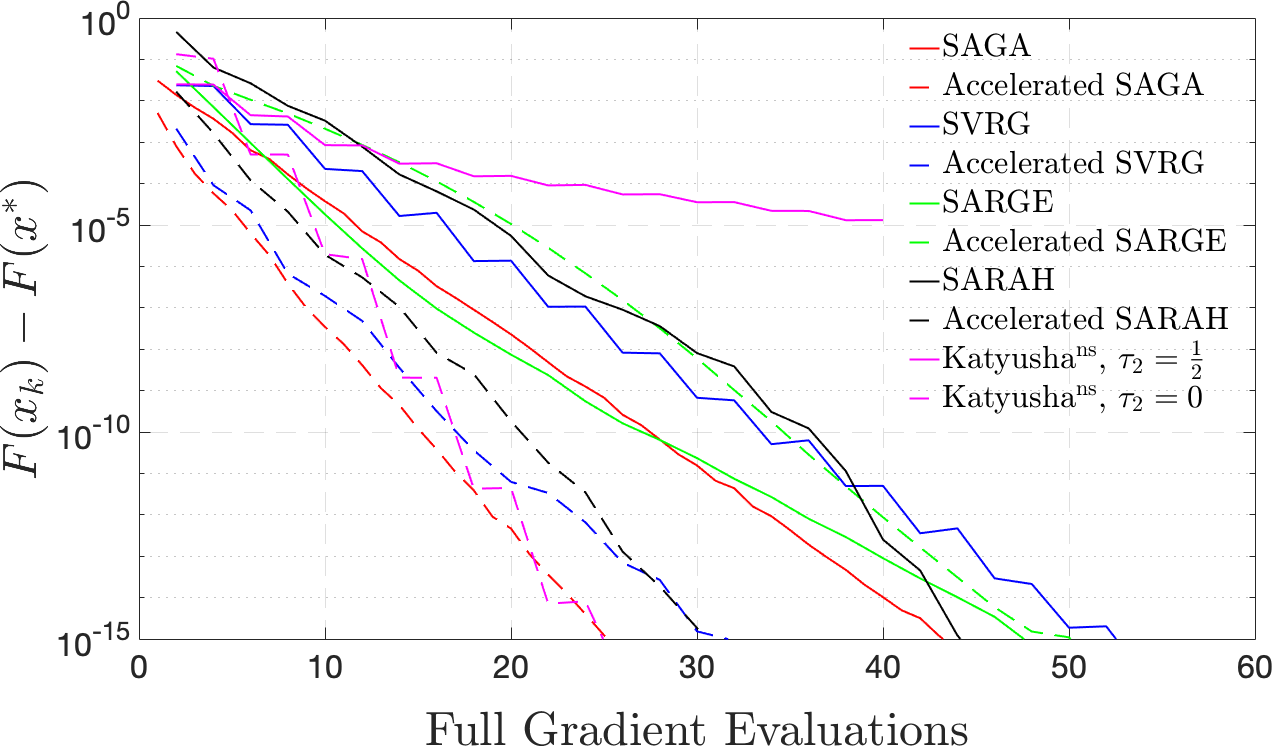} }  \\
%%%%%%%%
\caption{Performance comparison for solving LASSO among different algorithms. In Katyusha, the negative momentum parameters $\tau_2 = 0, \frac{1}{2}$ are not tuned.}
\label{fig:lasso}
\end{figure}

We measure performance with respect to the suboptimality $F(x_{k+1}) - F(x^*)$, where $x^*$ is a low-tolerance solution found using forward-backward. To fairly compare algorithms that require a different number of stochastic gradient evaluations per iteration, we report their performance with respect to the number of effective full gradient computations they perform on average each iteration. By this metric, SAGA performs $1 / n$ full gradient computations each iteration, while SVRG performs an average of $\frac{2}{n} + \frac{1}{2 n}$, for example.

Figures \ref{fig:ridge} and \ref{fig:lasso} display the median of 100 trials of ridge regression and LASSO, respectively. We observe the following trends:

\begin{itemize}
    \item Acceleration without negative momentum significantly improves the performance of SAGA, SVRG, SARAH, and SARGE in most cases. The improvement is least dramatic on the smallest data set, \texttt{australian}, and slightly less dramatic for the biased algorithms, SARAH and SARGE.
    \item Because they require only one stochastic gradient evaluation per iteration, SAGA and Accelerated SAGA require significantly less computation to achieve the same accuracy as other methods.
    \item In the strongly convex setting, Katyusha performs similarly to or better than SVRG with acceleration in most cases.
    \item In the non-strongly convex setting, Katyusha\textsuperscript{ns} performs much worse than other methods when using negative momentum. Without negative momentum, it performs much better than all algorithms except Accelerated SAGA. Because Katyusha without negative momentum is almost exactly the same algorithm as Accelerated SVRG, this improved performance is likely due to the second proximal step and additional step size $\eta$ in Katyusha. All of the algorithms presented in this work can adopt these features without changing their convergence rates.
\end{itemize}

\section{Conclusion}

Although acceleration is a widely used and an extensively researched technique in first-order optimisation, its application to stochastic gradient methods is still poorly understood. The introduction of negative momentum adds another layer of complexity to this line of research. Although algorithms using negative momentum enjoy fast convergence rates and strong performance when the parameters are tuned appropriately, it is unclear if negative momentum is necessary for acceleration. In this work, we propose a universal framework for accelerating stochastic gradient methods that does not rely on negative momentum.

Because our approach does not rely on negative momentum, it applies to a much broader class of stochastic gradient estimators. As long as the estimator admits natural bounds on its bias and MSE, it can be used in our framework to produce an accelerated stochastic gradient method with an optimal $1/T^2$ dependence on convex problems and an optimal $\sqrt{\kappa}$ dependence in the strongly convex setting. The bias and MSE of the estimator appear only in the constants of our convergence rates. From this perspective, negative momentum is effectively a variance-reduction technique, reducing the variance in the iterates to improve the dependence on $n$ in the convergence rates. A natural question for future research is whether there exist gradient estimators with smaller bias and MSE than SAGA, SVRG, SARAH, and SARGE that can be accelerated using our framework and admit a better dependence on $n$.

\section*{Acknowledgements}

C.-B.S. and M.J.E. acknowledge support from the EPSRC grant No. EP/S0260 45/1. C.-.B.S. acknowledges support from the Leverhulme Trust project ``Breaking the nonconvexity barrier'', the Philip Leverhulme Prize, the EPSRC grant No. EP/M00483X/1, the EPSRC Centre No. EP/N014588/1, the European Union Horizon 2020 research and innovation programmes under the Marie Skodowska-Curie grant agreement No. 777826 NoMADS and No. 691070 CHiPS, the Cantab Capital Institute for the Mathematics of Information and the Alan Turing Institute.

\bibliographystyle{acm}
\bibliography{main.bib}

\begin{thebibliography}{10}

\bibitem{Natasha}
{\sc Allen-Zhu, Z.}
\newblock Natasha: Faster non-convex stochastic optimization via strongly
  non-convex parameter.
\newblock In {\em {ICML}\/} (2017).

\bibitem{katyusha}
{\sc Allen-Zhu, Z.}
\newblock Katyusha: The first direct acceleration of stochastic gradient
  methods.
\newblock {\em Journal of Machine Learning Research 18\/} (2018), 1--51.

\bibitem{katyushaX}
{\sc Allen-Zhu, Z.}
\newblock Katyusha {X}: Practical momentum method for stochastic
  sum-of-nonconvex optimization.
\newblock In {\em {ICML}\/} (2018).

\bibitem{Natasha2}
{\sc Allen-Zhu, Z.}
\newblock Natasha 2: Faster non-convex optimization than {SGD}.
\newblock In {\em Advances in Neural Information Processing Systems\/} (2018).

\bibitem{zhunoncon}
{\sc Allen-Zhu, Z., and Hazan, E.}
\newblock Variance reduction for faster non-convex optimization.
\newblock In {\em Proceedings of the {$33^{rd}$} International Conference on
  Machine Learning\/} (2016), vol.~48.

\bibitem{lincoup}
{\sc Allen-Zhu, Z., and Orecchia, L.}
\newblock Linear coupling: An ultimate unification of gradient and mirror
  descent.
\newblock In {\em Proceedings of the $8^{th}$ Innovations in Theoretical
  Computer Science (ITCS)\/} (2017).

\bibitem{svrgplusplus}
{\sc Allen-Zhu, Z., and Yuan, Y.}
\newblock Improved {SVRG} for non-strongly-convex or sum-of-non-convex
  objectives.
\newblock In {\em {ICML}\/} (2018).

\bibitem{auslender_teboulle_2006}
{\sc Auslender, A., and Teboulle, M.}
\newblock Interior gradient and proximal methods for convex and conic
  optimization.
\newblock {\em SIAM J. Optim. 16}, 3 (2006), 697--725.

\bibitem{fista}
{\sc Beck, A., and Teboulle, M.}
\newblock A fast iterative shrinkage-thresholding algorithm for linear inverse
  problems.
\newblock {\em {SIAM} Journal on Imaging Sciences 2}, 1 (2009), 183--202.

\bibitem{nocedalReview}
{\sc Bottou, L., Curtis, F.~E., , and Nocedal, J.}
\newblock Optimization methods for large-scale machine learning.
\newblock {\em SIAM Review 60\/} (2018), 223--311.

\bibitem{RPCA}
{\sc Cand{\`e}s, E.~J., Li, X., Ma, Y., and Wright, J.}
\newblock Robust principal component analysis?
\newblock {\em Journal of the {ACM}\/} (2009).

\bibitem{matrixcompletion}
{\sc Cand{\'e}s, E.~J., and Recht, B.}
\newblock Exact matrix completion via convex optimization.
\newblock {\em Foundations of Computational Mathematics\/} (2009), 717–772.

\bibitem{fbs}
{\sc Combettes, P.~L., and Wajs, V.~R.}
\newblock Signal recovery by proximal forward-backward splitting.
\newblock {\em Multiscale Modelling and Simulation 4}, 4 (2005), 1168--1200.

\bibitem{pointSAGA}
{\sc Defazio, A.}
\newblock A simple practical accelerated method for finite sums.
\newblock In {\em Advances In Neural Information Processing Systems\/} (2016),
  pp.~676--684.

\bibitem{SAGA}
{\sc Defazio, A., Bach, F., and Lacoste-Julien, S.}
\newblock {SAGA}: A fast incremental gradient method with support for
  non-strongly convex composite objectives.
\newblock In {\em Advances in Neural Information Processing Systems\/} (2014),
  pp.~1646--1654.

\bibitem{techrepo}
{\sc Driggs, D., Liang, J., and Sch{\"o}nlieb, C.-B.}
\newblock A unified analysis of biased stochastic gradient methods--and one new
  one.
\newblock {\em Technical Report, University of Cambridge\/} (2019).

\bibitem{spider}
{\sc Fang, C., Li, C.~J., Lin, Z., and Zhang, T.}
\newblock Spider: Near-optimal non-convex optimization via stochastic path
  integrated differential estimator.
\newblock In {\em {$32^{nd}$} Conference on Neural Information Processing
  Systems\/} (2018).

\bibitem{APPA}
{\sc Frostig, R., Ge, R., Kakade, S.~M., and Sidford, A.}
\newblock Un-regularizing: approximate proximal point and faster stochastic
  algorithms for empirical risk minimization.
\newblock In {\em {ICML}\/} (2015), vol.~37, pp.~1--28.

\bibitem{ghadimi_lan_16}
{\sc Ghadimi, S., and Lan, G.}
\newblock Accelerated gradient methods for nonconvex nonlinear and stochastic
  programming.
\newblock {\em Mathematical Programming Series A 156\/} (2016), 59--99.

\bibitem{neighbors}
{\sc Hofmann, T., Lucchi, A., Lacoste-Julien, S., and McWilliams, B.}
\newblock Variance reduced stochastic gradient descent with neighbors.
\newblock In {\em Advances in Neural Information Processing Systems\/} (2015).

\bibitem{svrg}
{\sc Johnson, R., and Zhang, T.}
\newblock Accelerating stochastic gradient descent using predictive variance
  reduction.
\newblock In {\em Advances in Neural Information Processing Systems\/} (2013),
  pp.~315--323.

\bibitem{varag}
{\sc Lan, G., Li, Z., and Zhou, Y.}
\newblock A unified variance-reduced accelerated gradient method for convex
  optimization.
\newblock {\em ar{X}iv:1905.12412\/} (2019).

\bibitem{rpdg}
{\sc Lan, G., and Zhou, Y.}
\newblock An optimal randomized incremental gradient method.
\newblock {\em Mathematical Programming 171}, 1-2 (2018), 167--215.

\bibitem{catalyst}
{\sc Lin, H., Mairal, J., and Harchaoui, Z.}
\newblock A universal catalyst for first-order optimization.
\newblock In {\em Advances In Neural Information Processing Systems\/} (2015).

\bibitem{DonohoMRI}
{\sc Lustig, M., Donoho, D., and Pauly, J.~M.}
\newblock Sparse{MRI}: the application of compressed sensing for rapid {MR}
  imaging.
\newblock {\em Magnetic Resonance Medicine 6\/} (2007), 1182--1195.

\bibitem{nest2004}
{\sc Nesterov, Y.}
\newblock {\em Introductory lectures on convex programming}.
\newblock Springer, 2004.

\bibitem{sarah}
{\sc Nguyen, L.~M., Liu, J., Scheinberg, K., and Tak{\'a\^{c}}, M.}
\newblock {SARAH}: A novel method for machine learning problems using
  stochastic recursive gradient.
\newblock In {\em Proceedings of the 34th International Conference on Machine
  Learning\/} (2017), vol.~70, pp.~2613--2621.

\bibitem{Nitanda}
{\sc Nitanda, A.}
\newblock Stochastic proximal gradient descent with acceleration techniques.
\newblock In {\em Advances in Neural Information Processing Systems\/} (2014),
  pp.~1574--1582.

\bibitem{monotoneOps}
{\sc Passty, G.~B.}
\newblock Ergodic convergence to a zero of the sum of monotone operators in
  {H}ilbert space.
\newblock {\em Journal of Mathematical Analysis and Applications 72}, 2 (1979),
  383--390.

\bibitem{proxsarah}
{\sc Pham, N.~H., Nguyen, L.~M., Phan, D.~T., and Tran-Dinh, Q.}
\newblock Prox{SARAH}: An efficient algorithmic framework for stochastic
  composite nonconvex optimization.
\newblock {\em ar{X}iv:1902.05679\/} (2019).

\bibitem{reddi}
{\sc Reddi, S.~J., Sra, S., P{\'o}cz{\'o}s, B., and Smola, A.}
\newblock Fast stochastic methods for nonsmooth nonconvex optimization.
\newblock In {\em {ICML}\/} (2016).

\bibitem{sgd}
{\sc Robbins, H., and Monro, S.}
\newblock A stochastic approximation method.
\newblock {\em Annals of Mathematical Statistics 22}, 3 (1951), 400--407.

\bibitem{SAG}
{\sc Schmidt, M., Roux, N.~L., and Bach, F.}
\newblock Minimizing finite sums with the stochastic average gradient.
\newblock {\em Mathematical Programming 162\/} (2017), 83--112.

\bibitem{FSVRG}
{\sc Shang, F., Liu, Y., Cheng, J., and Zhuo, J.}
\newblock Fast stochastic variance reduced gradient method with momentum
  acceleration for machine learning.
\newblock In {\em Proceedings of the $34^{th}$ International Conference on
  Machine Learning (ICML)\/} (2017).

\bibitem{LASSO}
{\sc Tibshirani, R.}
\newblock Regression shrinkage and variable selection via the lasso.
\newblock {\em Journal of the Royal Statistical Society, Series B\/} (1996),
  267--288.

\bibitem{spiderboost}
{\sc Wang, Z., Ji, K., Zhou, Y., Liang, Y., and Tarokh, V.}
\newblock {SpiderBoost}: A class of faster variance-reduced algorithms for
  nonconvex optimization.
\newblock {\em ar{X}iv:1810.10690\/} (2018).

\bibitem{srebrocomplexity}
{\sc Woodworth, B., and Srebro, N.}
\newblock Tight complexity bounds for optimizing composite objectives.
\newblock In {\em Advances in Neural Information Processing Systems\/} (2016).

\bibitem{proxsvrg}
{\sc Xiao, L., and Zhang, T.}
\newblock A proximal stochastic gradient method with progressive variance
  reduction.
\newblock {\em {SIAM} Journal on Optimization 24}, 4 (2014), 2057--2075.

\bibitem{ZhangXiaoPrimalDual}
{\sc Zhang, Y., and Xiao, L.}
\newblock Stochastic primal-dual coordinate method for regularized empirical
  risk minimization.
\newblock In {\em {ICML}\/} (2015).

\bibitem{subsamp}
{\sc Zhou, K.}
\newblock Direct acceleration of saga using sampled negative momentum.
\newblock {\em arXiv:1806.11048\/} (2018).

\bibitem{MiG}
{\sc Zhou, K., Shang, F., and Cheng, J.}
\newblock A simple stochastic variance reduced algorithm with fast convergence
  rates.
\newblock In {\em {ICML}\/} (2018).

\bibitem{spiderm}
{\sc Zhou, Y., Wang, Z., Ji, K., Liang, Y., and Tarokh, V.}
\newblock Momentum schemes with stochastic variance reduction for nonconvex
  composite optimization.
\newblock {\em ar{X}iv:1902.02715\/} (2019).

\end{thebibliography}

\clearpage

\begin{appendix}

\section{One Technical Lemma}

\begin{lemma}
\label{lem:tech}
    Given a non-negative sequence $\sigma_k$, a constant $\rho \in [0,1]$, and an index $T \ge 1$, the following estimate holds:
    \begin{equation}
        \sum_{k = 1}^T \sum_{\ell = 1}^k (1 - \rho)^{k - \ell} \sigma_\ell \le \frac{1}{\rho} \sum_{k=1}^T \sigma_k.
    \end{equation}
\end{lemma}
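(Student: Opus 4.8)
The plan is to exploit the triangular structure of the double sum by interchanging the order of summation and then recognising the resulting inner sum as a truncated geometric series whose terms are all non-negative.

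First I would swap the two summations. The index region is the triangle $\{(k,\ell) : 1 \le \ell \le k \le T\}$, so fixing $\ell$ and letting $k$ range over $\ell, \ell+1, \dots, T$ gives
$$\sum_{k=1}^T \sum_{\ell=1}^k (1-\rho)^{k-\ell} \sigma_\ell = \sum_{\ell=1}^T \sigma_\ell \sum_{k=\ell}^T (1-\rho)^{k-\ell}.$$
This is nothing more than Fubini for finite sums, so it needs no justification beyond the reindexing of the summation region.

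Next I would bound the inner sum. Substituting $j = k-\ell$ turns it into $\sum_{j=0}^{T-\ell}(1-\rho)^j$, a partial sum of a geometric series with ratio $1-\rho \in [0,1)$ when $\rho \in (0,1]$. Because every term is non-negative, this partial sum is dominated by the full series $\sum_{j=0}^{\infty}(1-\rho)^j = \tfrac{1}{\rho}$. The boundary cases are immediate: when $\rho = 1$ the inner sum is the single term $(1-\rho)^0 = 1 = \tfrac{1}{\rho}$, and when $\rho = 0$ the right-hand side of the lemma is infinite, so the estimate holds trivially (in practice the lemma is only ever invoked with $\rho \in (0,1]$).

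Finally, since each $\sigma_\ell \ge 0$, I can multiply the pointwise bound $\sum_{k=\ell}^T (1-\rho)^{k-\ell} \le \tfrac{1}{\rho}$ by $\sigma_\ell$ and sum over $\ell$, obtaining
$$\sum_{\ell=1}^T \sigma_\ell \sum_{k=\ell}^T (1-\rho)^{k-\ell} \le \frac{1}{\rho}\sum_{\ell=1}^T \sigma_\ell,$$
which is exactly the claimed inequality. There is no real obstacle in this argument; the only points requiring care are the bookkeeping in the index swap and the fact that non-negativity of $\sigma_k$ is precisely what allows the truncated geometric sum to be replaced termwise by its infinite-series limit.
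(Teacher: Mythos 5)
Your proof is correct and follows essentially the same route as the paper's: both interchange the order of summation (the paper does this by expanding and regrouping the double sum) and then bound the resulting truncated geometric series by its infinite limit $\sum_{j=0}^{\infty}(1-\rho)^j = 1/\rho$, using non-negativity of $\sigma_\ell$. Your explicit handling of the boundary cases $\rho=0$ and $\rho=1$ is a small bonus over the paper's presentation, but the argument is the same.
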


\begin{proof}
    This follows from expanding the double-sum and computing a series:
    \begin{align}
        \sum_{k = 1}^T \sum_{\ell = 1}^k (1 - \rho)^{k - \ell} \sigma_\ell & = [ \sigma_1 ] + [ (1 - \rho) \sigma_1 + \sigma_2 ] + [ (1-\rho)^2 \sigma_1 + (1-\rho) \sigma_2 + \sigma_3] + \cdots \\
        %%%%%%%%%%%%%%%%%%%%%%%%%%%%%%%%%%
        & = [1 + (1-\rho) + (1-\rho)^2 + \cdots + (1-\rho)^{T-1}] \sigma_1 \\
        & \quad + [1 + (1-\rho) + (1-\rho)^2 + \cdots + (1-\rho)^{T-2}] \sigma_2 + \cdots \\
        %%%%%%%%%%%%%%%%%%%%%%%%%%%%%%%%%%%%
        & \le \left( \sum_{\ell = 0}^\infty (1 - \rho)^\ell \right) \left( \sum_{k=1}^T \sigma_k \right) \\
        %%%%%%%%%%%%%%%%%%%%%%%%%%%%%%%%%%%%%
        & = \frac{1}{\rho} \sum_{k=1}^T \sigma_k.
    \end{align}
\end{proof}

\section{Proofs of Non-Positivity}
\label{app:nonpos}

The goal is to show that the two terms
\begin{equation}
\label{eq:two}
\frac{\rho_M (1-\rho_B)}{8 \tau_k^2} + 4 \gamma^2 L^2 \Theta_1 \Theta_2 + \frac{\gamma_k}{\tau_k} \left( \frac{L}{2} - \frac{1}{4 \tau_k \gamma_k} \right) \quad \textnormal{and} \quad 8 \gamma_k^2 L \Theta_1 \Theta_2 - \frac{\gamma_k (1-\tau_k)}{\tau_k}
\end{equation}
are non-positive with the parameter choices of Theorems \ref{thm:main1} and \ref{thm:main2}. We consider three cases.

\paragraph{Case 1.} Let $\gamma_k$ and $\tau_k$ be as in the statement of Theorem \ref{thm:main1}. For the first term in \eqref{eq:two},
\begin{align}
    & \frac{\rho_M (1-\rho_B)}{8 \tau_k^2} + 4 \gamma_k^2 L^2 \Theta_1 \Theta_2 + \frac{\gamma_k}{\tau_k} \left( \frac{L}{2} - \frac{1}{4 \tau_k \gamma_k} \right) \\
    = & \gamma_k^2 L^2 \left( \frac{\rho_M (1-\rho_B) c^2}{8} + 4 \Theta_1 \Theta_2 + 4 c \left( \frac{1}{2} - \frac{c}{4} \right) \right) 
\end{align}
The constraint
\begin{equation}
    c \ge \frac{2}{2 - \rho_M + \rho_B \rho_M} \left(1 + \sqrt{1 + 8 \Theta_1 \Theta_2 (2 - \rho_M + \rho_B \rho_M} ) \right)
\end{equation}
ensures that this quadratic in $c$ is non-positive. For the second term, we require $\tau_k \le 1/2$ for all $k$, which holds because $\tau_k = \frac{2}{k + \nu + 4} \le \frac{1}{2}$. Therefore,
\begin{equation}
    8 \gamma_k^2 L \Theta_1 \Theta_2 - \frac{\gamma_k (1-\tau_k)}{\tau_k} \le 8 \gamma_k^2 L \Theta_1 \Theta_2 - \frac{c L \gamma_k^2 \Theta_1 \Theta_2}{2}.
\end{equation}
The constraint $c \ge 16 \Theta_1 \Theta_2$ implies that this quantity is non-positive.

\paragraph{Case 2.} Let $\gamma$ and $\tau$ be as in the statement of Theorem \ref{thm:main2}, and suppose $\frac{1}{\sqrt{\mu L c}} \le \frac{\rho}{2 \mu}$. In this case, $\tau = \sqrt{\frac{\mu}{L c}} = \frac{1}{L c \gamma}$. As in Case 1,
\begin{align}
    & \frac{\rho_M (1-\rho_B)}{8 \tau^2} + 4 \gamma^2 L^2 \Theta_1 \Theta_2 + \frac{\gamma}{\tau} \left( \frac{L}{2} - \frac{1}{4 \tau \gamma} \right) \\
    = & \gamma^2 L^2 \left( \frac{\rho_M (1-\rho_B) c^2}{8} + 4 \Theta_1 \Theta_2 + 4 c \left( \frac{1}{2} - \frac{c}{4} \right) \right),
\end{align}
which is non-positive due to the constraints on $c$. For the second term, all we must show is that $1 - \tau \ge 1 / 2$. We have $\tau = \sqrt{\frac{\mu}{L c}} \le \frac{1}{\sqrt{c}}$, and $c$ is larger than 4, so the constraint $c \ge 16 \Theta_1 \Theta_2$ ensures that the second term in \eqref{eq:two} is non-positive.

\paragraph{Case 3.} In Theorem \ref{thm:main2}, suppose instead that $\frac{\rho}{2 \mu} \le \frac{1}{\sqrt{\mu L c}}$, so that $\gamma = \frac{\rho}{2 \mu}$ and $\tau = \frac{\rho}{2}$. This assumption implies the inequality $\frac{L}{\mu} \le \frac{4}{c \rho^2}$, so
\begin{align}
    & \frac{\rho_M (1-\rho_B)}{8 \tau^2} + 4 \gamma^2 L^2 \Theta_1 \Theta_2 + \frac{\gamma}{\tau} \left( \frac{L}{2} - \frac{1}{4 \tau \gamma} \right) \\
    & = \frac{\rho_M (1-\rho_B)}{8 \mu^2 \gamma^2} + 4 \gamma^2 L^2 \Theta_1 \Theta_2 + \frac{1}{\mu} \left( \frac{L}{2} - \frac{1}{4 \mu \gamma^2} \right) \\
    & = \frac{\rho_M (1-\rho_B)}{2 \rho^2} + \frac{\rho^2 L^2 \Theta_1 \Theta_2}{\mu^2} + \frac{L}{2 \mu} - \frac{1}{\rho^2} \\
    & \le \frac{\rho_M (1-\rho_B)}{2 \rho^2} + \frac{16 \Theta_1 \Theta_2}{c^2 \rho^2} + \frac{2}{c \rho^2} - \frac{1}{\rho^2} \\
    & = \frac{1}{c^2 \rho^2} \left( \frac{\rho_M (1-\rho_B) c^2}{2} + 16 \Theta_1 \Theta_2 + 2 c - c^2 \right).
\end{align}
This is a quadratic in $c$ with the root
\begin{equation}
    \frac{2}{2 - \rho_M + \rho_B \rho_M} \left(1 + \sqrt{1 + 8 \Theta_1 \Theta_2 (2 - \rho_M + \rho_B \rho_M} \right).
\end{equation}
Because $c$ is larger than this quantity, this term is non-positive. For the second term in \eqref{eq:two},
\begin{align}
    8 \gamma^2 L \Theta_1 \Theta_2 - \frac{\gamma (1-\tau)}{\tau} & = \frac{2 L \Theta_1 \Theta_2 \rho^2}{\mu^2} - \frac{1}{2 \mu} \le \frac{8 \Theta_1 \Theta_2}{c \mu} - \frac{1}{2 \mu} \le 0,
\end{align}
where the last inequality follows from the fact that $c \ge 16 \Theta_1 \Theta_2$.

\section{Proofs for SAGA and SVRG} 
\label{sec:saga}

Our results for SAGA and SVRG require the following lemma, which appears also as \cite[Lem. 7]{reddi}.

\begin{lemma}
\label{lem:orth}
    Suppose $X_1, \cdots, X_t$ are independent random variables satisfying $\E X_i = 0$ for all $i$. Then
    \begin{equation}
        \E \|X_1 + \cdots + X_t\|^2 = \E [ \|X_1\|^2 + \cdots + \|X_t\|^2 ].
    \end{equation}
\end{lemma}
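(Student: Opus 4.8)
The plan is to exploit the bilinearity of the inner product together with the independence and mean-zero hypotheses. First I would expand the squared norm of the sum as a double sum of inner products, separating the diagonal from the off-diagonal terms:
\[
\Big\| \sum_{i=1}^t X_i \Big\|^2 = \sum_{i=1}^t \sum_{j=1}^t \langle X_i, X_j \rangle = \sum_{i=1}^t \|X_i\|^2 + \sum_{i \neq j} \langle X_i, X_j \rangle .
\]
The diagonal terms $\langle X_i, X_i \rangle = \|X_i\|^2$ are exactly the quantities appearing on the right-hand side of the claim, so the entire content of the lemma reduces to showing that the cross terms contribute nothing in expectation.

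Next I would apply the expectation operator and use its linearity to distribute it across the finite sum, reducing the statement to the single assertion that $\E \langle X_i, X_j \rangle = 0$ whenever $i \neq j$. Here the key observation is that for $i \neq j$ the vectors $X_i$ and $X_j$ are independent, so the expectation factors through the inner product: writing $\langle X_i, X_j \rangle = \sum_\ell (X_i)_\ell (X_j)_\ell$ in coordinates and applying the scalar product rule $\E[AB] = \E[A]\,\E[B]$ for independent random variables coordinatewise yields $\E \langle X_i, X_j \rangle = \langle \E X_i, \E X_j \rangle$. Since $\E X_i = \E X_j = 0$ by hypothesis, each cross term vanishes.

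The only point requiring any justification — and it is entirely standard — is that expectation factors through the inner product for independent random vectors; everything else is bilinearity and linearity of expectation. After discarding the cross terms, the surviving contribution is precisely $\sum_{i=1}^t \E \|X_i\|^2 = \E[\|X_1\|^2 + \cdots + \|X_t\|^2]$, which is the desired identity. I do not anticipate any genuine obstacle in this argument; the result is the familiar fact that the variance of a sum of independent centred variables is the sum of their variances, stated for vector-valued variables.
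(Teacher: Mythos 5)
Your proposal is correct and follows essentially the same route as the paper's proof: expand the squared norm into a double sum, observe that the cross terms $\mathbb{E}\langle X_i, X_j\rangle$ vanish for $i \neq j$ by independence and the mean-zero hypothesis, and keep only the diagonal. Your version is simply a more carefully spelled-out account of the same argument (the coordinatewise factoring of the expectation through the inner product), which the paper compresses into a single line.
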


\begin{proof}
    Our hypotheses on these random variables imply $\E [X_i X_j] = 0$ for $i \not = j$. Therefore,
    \begin{align}
        \E \|X_1 + \cdots + X_t\|^2 = \sum_{i,j = 1}^t \E [ X_i X_j ] = \|X_1\|^2 + \cdots + \|X_t\|^2.
    \end{align}
\end{proof}

We begin with a standard bound on the variance $\|\widetilde{\nabla}^{\textnormal{\tiny SAGA}}_{k+1} - \nabla f(x_{k+1})\|^2$ that is an easy consequence of the variance bound in \cite{SAGA}, but \cite{SAGA} and related works \cite{pointSAGA,katyusha,svrg,proxsvrg} ultimately use a looser bound in their convergence analysis.
\begin{lemma}
\label{lem:firstvarbound}
The variance of the SAGA gradient estimator with minibatches of size $b$ is bounded as follows:
\begin{equation}
    \E \|\widetilde{\nabla}^{\textnormal{\tiny SAGA}}_{k+1} - \nabla f(x_{k+1}) \|^2 \le \frac{1}{b n} \sum_{i=1}^n \| \nabla f_i(x_{k+1}) - \nabla f_i(\varphi_k^i) \|^2
\end{equation}
\end{lemma}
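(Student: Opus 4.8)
The plan is to recognise the estimator error as the deviation of a mini-batch sample mean from the corresponding population mean, and then control this variance with Lemma \ref{lem:orth}. Write $d_i \defeq \nabla f_i(x_{k+1}) - \nabla f_i(\varphi_k^i)$ and $\bar{d} \defeq \frac{1}{n}\sum_{i=1}^n d_i$. Conditioned on the first $k$ iterates, the point $x_{k+1}$ and every $\varphi_k^i$ are fixed, so the $d_i$ are constants and all randomness enters through $J_k$. Since $\nabla f(x_{k+1}) = \frac{1}{n}\sum_i \nabla f_i(x_{k+1})$, the definition \eqref{eq:saga} gives $\widetilde{\nabla}^{\textnormal{\tiny SAGA}}_{k+1} - \nabla f(x_{k+1}) = \frac{1}{b}\sum_{j\in J_k} d_j - \bar{d}$, and because $J_k$ contains exactly $b$ indices I can absorb $\bar{d}$ into the sum to obtain $\frac{1}{b}\sum_{j\in J_k}(d_j - \bar{d})$, a mini-batch average of the centred vectors $d_j - \bar{d}$, whose population mean is zero.

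First I would model the mini-batch as $b$ indices $j_1,\dots,j_b$ drawn independently and uniformly from $\{1,\dots,n\}$ and set $Y_\ell \defeq d_{j_\ell} - \bar{d}$, so the $Y_\ell$ are i.i.d.\ with $\E Y_\ell = 0$. The error is $\frac{1}{b}\sum_{\ell=1}^b Y_\ell$, and Lemma \ref{lem:orth} yields $\E\|\sum_{\ell=1}^b Y_\ell\|^2 = \sum_{\ell=1}^b \E\|Y_\ell\|^2 = b\,\E\|Y_1\|^2$, where $\E\|Y_1\|^2 = \frac{1}{n}\sum_{i=1}^n \|d_i - \bar{d}\|^2$ by uniformity. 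Hence $\E\|\widetilde{\nabla}^{\textnormal{\tiny SAGA}}_{k+1} - \nabla f(x_{k+1})\|^2 = \frac{1}{bn}\sum_{i=1}^n \|d_i - \bar{d}\|^2$. (If $J_k$ is instead drawn without replacement, the inclusion indicators $\mathbf{1}_{\{j\in J_k\}}$ are no longer independent and Lemma \ref{lem:orth} does not apply verbatim; one evaluates the pairwise probabilities $\E[\mathbf{1}_{\{i\in J_k\}}\mathbf{1}_{\{j\in J_k\}}]$ directly, which introduces a finite-population correction factor $\frac{n-b}{n-1}\le 1$, so the same bound holds a fortiori.)

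To finish, I would discard the centring using the Pythagorean identity $\sum_{i=1}^n \|d_i - \bar{d}\|^2 = \sum_{i=1}^n\|d_i\|^2 - n\|\bar{d}\|^2 \le \sum_{i=1}^n\|d_i\|^2$, which follows from $\sum_{i=1}^n (d_i - \bar{d}) = 0$. Combining this with the previous display and substituting back $d_i = \nabla f_i(x_{k+1}) - \nabla f_i(\varphi_k^i)$ gives exactly the claimed bound. The only real obstacle is bookkeeping around the sampling model: one must confirm that the centred summands genuinely have mean zero and are independent, or, under without-replacement sampling, correctly compute the pairwise inclusion probabilities and verify that the correction factor is at most one. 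Everything else—absorbing $\bar{d}$ into the average and dropping the nonnegative term $n\|\bar{d}\|^2$—is routine, and notably no smoothness or convexity of the $f_i$ is needed for this particular lemma.
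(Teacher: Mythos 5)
Your proof is correct and follows essentially the same route as the paper's: both write the error as a centred mini-batch average of the vectors $\nabla f_i(x_{k+1}) - \nabla f_i(\varphi_k^i)$, apply Lemma \ref{lem:orth} to reduce the squared norm of the sum to a sum of squared norms, and then discard the non-negative term coming from the squared mean. Your parenthetical on with- versus without-replacement sampling is a point the paper's proof glosses over, and your handling of it (the finite-population correction factor is at most one) is accurate, so if anything your version is slightly more careful.
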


\begin{proof}
Let $X_i = \nabla f_j (x_{k+1}) - \nabla f_j(\varphi_k^j)$ for $j \in J_k$. We then have
\begin{align}
    &\E \|\widetilde{\nabla}^{\textnormal{\tiny SAGA}}_{k+1} - \nabla f(x_{k+1}) \|^2 \notag \\
    = & \E \| \frac{1}{b} \sum_{j \in J_k} (\nabla f_j(x_{k+1}) - \nabla f_j(\varphi_k^j)) + \frac{1}{n} \sum_{i=1}^n \nabla f_i(\varphi_k^i) - \nabla f(x_{k+1}) \|^2 \\
    = & \E \| \frac{1}{b} \sum_{i = 1}^b X_i - \E X_i \|^2 \\
    \symnum{1}{=} & \frac{1}{b^2} \E \sum_{i = 1}^b \left\| X_i \right\|^2 \\
    = & \frac{1}{b^2} \E \sum_{j \in J_k} \left\| \nabla f_j(x_{k+1}) - \nabla f_j(\varphi_k^j) \right\|^2 \\
    = & \frac{1}{b n} \sum_{i=1}^n \| \nabla f_i(x_{k+1}) - \nabla f_i(\varphi_k^i) \|^2.
\end{align}
Equality \numcirc{1} is due to Lemma \ref{lem:orth}.
\end{proof}

Lemma \ref{lem:firstvarbound} provides a variance bound that is compatible with the MSEB property, as we show in the following lemma.

\begin{lemma}
\label{lem:varbound}
The SAGA gradient estimator satisfies the MSEB property with $M_1 = \frac{3 n}{b^2}$, $\rho_M = \frac{b}{2 n}$, $M_2 = 0$, and $\rho_B = \rho_F = 1$.
\end{lemma}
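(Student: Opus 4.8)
**The plan is to verify the MSEB property for the SAGA estimator by leveraging the clean variance bound from Lemma \ref{lem:firstvarbound}.** Since SAGA is an unbiased estimator, I would first dispatch the bias condition immediately: because $\E \widetilde{\nabla}^{\textnormal{\tiny SAGA}}_{k+1} = \nabla f(x_{k+1})$, the left-hand side of the bias equation is identically zero, so the choice $\rho_B = 1$ makes the right-hand side zero as well and the condition holds. Similarly, since Lemma \ref{lem:firstvarbound} gives a \emph{Markovian} bound on the variance (depending only on $\varphi_k^i$, which in turn depends on past iterates through the stored gradients), I expect to be able to take $M_2 = 0$ and $\rho_F = 1$, so that $\mathcal{F}_k = 0$ identically. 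The real work is therefore in establishing the recursive inequality \eqref{eq:mseb} for $\mathcal{M}_k$ with the stated $M_1 = 3n/b^2$ and $\rho_M = b/(2n)$.

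The key step is to set $\mathcal{M}_k \defeq \frac{1}{bn}\sum_{i=1}^n \mathbb{E}\|\nabla f_i(x_{k+1}) - \nabla f_i(\varphi_k^i)\|^2$, which by Lemma \ref{lem:firstvarbound} dominates the variance $\mathbb{E}\|\widetilde{\nabla}^{\textnormal{\tiny SAGA}}_{k+1} - \nabla f(x_{k+1})\|^2$. I would then analyze how $\mathcal{M}_k$ relates to $\mathcal{M}_{k-1}$ by tracking the update rule of the stored points $\varphi_k^i$. The mechanism is that on each iteration, the indices $j \in J_k$ (a fraction $b/n$ of all indices in expectation) have their stored gradient refreshed to $\nabla f_j(x_{k+1})$, while the remaining $1 - b/n$ fraction retain $\varphi_{k-1}^i$. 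The plan is to condition on the current iterate, split the expectation over which indices get updated, and use the inequality $\|a - c\|^2 \le (1+\beta)\|a-b\|^2 + (1+\beta^{-1})\|b-c\|^2$ (with $b$ being the previous stored point or the previous iterate $x_k$) to peel off a term measuring the \emph{new} displacement $\|\nabla f_i(x_{k+1}) - \nabla f_i(x_k)\|^2$ and a contracted copy of $\mathcal{M}_{k-1}$.

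\textbf{The main obstacle is controlling the constants in this recursion to land precisely on $M_1 = 3n/b^2$ and the contraction factor $1 - \rho_M = 1 - b/(2n)$.} The difficulty is that when an index is \emph{not} selected, its contribution $\|\nabla f_i(x_{k+1}) - \nabla f_i(\varphi_{k-1}^i)\|^2$ compares the \emph{new} iterate $x_{k+1}$ against an \emph{old} stored point, whereas $\mathcal{M}_{k-1}$ compares $x_k$ against that same stored point; bridging this gap requires the splitting inequality, and the free parameter $\beta$ must be tuned so that the geometric decay rate coming from the $1 - b/n$ survival probability, after absorbing the $(1+\beta)$ factor, still yields a genuine contraction of the form $1 - b/(2n)$. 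I would choose $\beta = \mathcal{O}(b/n)$ so that $(1 - b/n)(1+\beta) \le 1 - b/(2n)$, which then forces the coefficient $(1+\beta^{-1}) = \mathcal{O}(n/b)$ multiplying the new-displacement term; combined with the extra factors from the update and the $1/(bn)$ normalization, this is what produces the $3n/b^2$ in $M_1$. The SVRG claim then follows by an analogous but simpler argument: with the full gradient recomputed with probability $1/p$, the stored reference point $\widetilde{x}$ plays the role of a common $\varphi$, and setting $p = \mathcal{O}(n/b)$ matches the effective refresh rate of SAGA, giving the same parameters.
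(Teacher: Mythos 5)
Your proposal is correct and follows essentially the same route as the paper: take $\mathcal{M}_k = \frac{1}{bn}\sum_{i=1}^n \mathbb{E}\|\nabla f_i(x_{k+1}) - \nabla f_i(\varphi_k^i)\|^2$ as the dominating sequence from Lemma \ref{lem:firstvarbound}, dispatch the bias and $\mathcal{F}_k$ conditions trivially by unbiasedness and Markovianity, and establish the recursion via the splitting inequality with the weight $\beta = \tfrac{b}{2n}$ on the surviving term so that $(1-\tfrac{b}{n})(1+\tfrac{b}{2n}) \le 1 - \tfrac{b}{2n}$, which is exactly the paper's choice and yields $M_1 = \tfrac{3n}{b^2}$ from $1 + \tfrac{2n}{b} \le \tfrac{3n}{b}$. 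The only cosmetic difference is that you describe conditioning on which indices are refreshed before applying the splitting inequality, whereas the paper splits first and then computes the expectation over the refresh; the two orderings are equivalent here.
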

\begin{proof}
Lemma \ref{lem:firstvarbound} shows that the MSE of the SAGA gradient estimator is dominated by $\frac{1}{b n} \sum_{i=1}^n \mathbb{E} \| \nabla f_i(x_{k+1})$ $- \nabla f_i(\varphi_k^i)\|^2$, so we choose this sequence for $\mathcal{M}_k$. Using the inequality $\|a-c\|^2 \le (1+\frac{2 n}{b}) \|a-b\|^2 + (1+\frac{b}{2 n}) \|b-c\|^2$,
\begin{align}
\label{eq:mainsagavar}
    \mathcal{M}_k = & \frac{1}{b n} \sum_{i=1}^n \mathbb{E} \| \nabla f_i(x_{k+1}) - \nabla f_i(\varphi_k^i) \|^2 \\
    %%%%%%%%%%%%%%%%%%%%%%%%%%%%%%%%%%%%%%%%%%%%%%%%%%
    \le & \frac{1+\frac{2 n}{b}}{b n} \sum_{i=1}^n \mathbb{E} \| \nabla f_i(x_{k+1}) - \nabla f_i(x_k) \|^2 + \frac{1+\frac{b}{2 n}}{b n} \sum_{i=1}^n \mathbb{E} \| \nabla f_i(x_k) - \nabla f_i(\varphi_k^i) \|^2 \\
    %%%%%%%%%%%%%%%%%%%%%%%%%%%%%%%%%%%%%%%%%%%%%%%%%%%%%%%%%
    \symnum{1}{=} & \frac{1+\frac{2 n}{b}}{b n} \sum_{i=1}^n \mathbb{E} \| \nabla f_i(x_{k+1}) - \nabla f_i(x_k) \|^2 + \frac{1+\frac{b}{2n}}{b n} \left( 1 - \frac{b}{n} \right) \sum_{i=1}^n \mathbb{E} \| \nabla f_i(x_k) - \nabla f_i(\varphi_{k-1}^i)\|^2 \\
    %%%%%%%%%%%%%%%%%%%%%%%%%%%%%%%%%%%%%%%%%%%%%%%%%%%%%%%%%
    \symnum{2}{\le} & \frac{3}{b^2} \sum_{i=1}^n \mathbb{E} \| \nabla f_i(x_{k+1}) - \nabla f_i(x_k) \|^2 + \frac{1}{b n} \left( 1 - \frac{b}{2 n} \right) \sum_{i=1}^n \mathbb{E} \| \nabla f_i(x_k) - \nabla f_i(\varphi_{k-1}^i)\|^2 \\
    %%%%%%%%%%%%%%%%%%%%%%%%%%%%%%%%%%%%%%%%%%%%%%%%%%%%%%%%%
    = & \frac{3}{b^2} \sum_{i=1}^n \mathbb{E} \| \nabla f_i(x_{k+1}) - \nabla f_i(x_k) \|^2 + \left( 1 - \frac{b}{2 n} \right) \mathcal{M}_{k-1}.
    %%%%%%%%%%%%%%%%%%%%%%%%%%%%%%%%%%%%%%%%%%%%%%%%%%%%%%%%%
\end{align}
Equality \numcirc{1} follows from computing expectations and the update rule for $\varphi_k^i$:
\begin{align}
    \sum_{i=1}^n \mathbb{E} \| \nabla f_i(x_k) - \nabla f_i(\varphi_k^i)\|^2 & = \frac{1}{b} \sum_{j \in J_{k-1}} \mathbb{E} \| \nabla f_j(x_k) - \nabla f_j (\varphi_k^j)\|^2 \notag \\
    & \quad \quad + \mathbb{E} \sum_{i \not \in J_{k-1} }^n \| \nabla f_i(x_k) - \nabla f_i(\varphi_{k-1}^i)\|^2 \\
    &= 0 + \left(1-\frac{b}{n}\right) \sum_{i = 1}^n \mathbb{E} \| \nabla f_i(x_k) - \nabla f_i(\varphi_{k-1}^i)\|^2,
\end{align}
and \numcirc{2} follows from the the inequalities $\left( 1 + \frac{b}{2 n} \right) \left( 1 - \frac{b}{n} \right) \le \left( 1 - \frac{b}{2 n} \right)$ and $1+\frac{2 n}{b} \le \frac{3 n}{b}$. This shows that we can take $M_1 = \frac{3 n}{b^2}$, $M_2 = 0$, and $\rho_F = 1$. Because the SAGA gradient estimator is unbiased, we can clearly set $\rho_B = 1$, proving the claim.
\end{proof}

A similar result holds for the SVRG gradient estimator.

\begin{corollary}
\label{lem:svrgvarbound}
The SVRG gradient estimator satisfies the MSEB property with $M_1 = \frac{3 p}{b}$, $\rho_M = \frac{1}{2 p}$, $M_2 = 0$, and $\rho_B = \rho_F = 1$.
\end{corollary}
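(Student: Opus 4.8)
The plan is to mirror the proof of Lemma \ref{lem:varbound} for SAGA, exploiting the structural analogy between the two estimators. SVRG is the special case in which all stored gradients share a single anchor point $\phi_k$ (the most recent iterate at which the full gradient was evaluated), and this anchor is refreshed to the current point $x_k$ simultaneously for every component --- with probability $1/p$ --- rather than on a random minibatch of size $b$ as in SAGA. Because the SVRG estimator \eqref{eq:svrg} is unbiased, we may immediately take $\rho_B = 1$, and since it admits a Markovian variance bound, we take $M_2 = 0$ and $\rho_F = 1$. The entire content of the corollary is therefore the recursive bound on $\mathcal{M}_k$ with $M_1 = 3p/b$ and $\rho_M = 1/(2p)$.

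First I would establish the SVRG analogue of Lemma \ref{lem:firstvarbound}. Writing the estimator's deviation from $\nabla f(x_{k+1})$ as an average of the mean-zero summands $X_j - \mathbb{E} X_j$ with $X_j \defeq \nabla f_j(x_{k+1}) - \nabla f_j(\phi_k)$ over $j \in J_k$, the orthogonality identity of Lemma \ref{lem:orth} (exactly as in the SAGA case, but with the per-component stored point $\varphi_k^j$ replaced by the common anchor $\phi_k$) yields
\begin{equation*}
\mathbb{E}\|\widetilde{\nabla}^{\textnormal{\tiny SVRG}}_{k+1} - \nabla f(x_{k+1})\|^2 \le \frac{1}{bn}\sum_{i=1}^n \mathbb{E}\|\nabla f_i(x_{k+1}) - \nabla f_i(\phi_k)\|^2,
\end{equation*}
so I would set $\mathcal{M}_k \defeq \frac{1}{bn}\sum_{i=1}^n \mathbb{E}\|\nabla f_i(x_{k+1}) - \nabla f_i(\phi_k)\|^2$.

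The core step is the recursion. Applying the Young-type inequality $\|a-c\|^2 \le (1+2p)\|a-b\|^2 + (1+\tfrac{1}{2p})\|b-c\|^2$ with $a = \nabla f_i(x_{k+1})$, $b = \nabla f_i(x_k)$, $c = \nabla f_i(\phi_k)$ and summing over $i$ splits $\mathcal{M}_k$ into a term controlled by $\frac{1}{n}\sum_i \mathbb{E}\|\nabla f_i(x_{k+1}) - \nabla f_i(x_k)\|^2$ and a term involving $\frac{1}{bn}\sum_i \mathbb{E}\|\nabla f_i(x_k) - \nabla f_i(\phi_k)\|^2$. For the latter I would invoke the anchor update rule: conditioned on the history, $\phi_k = x_k$ with probability $1/p$ --- in which case the summand vanishes --- and $\phi_k = \phi_{k-1}$ otherwise, so this quantity equals $(1-\tfrac{1}{p})\mathcal{M}_{k-1}$. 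Here SVRG is in fact simpler than SAGA, since the refresh is a single Bernoulli event affecting all components at once, rather than a per-component minibatch indicator. Collecting the two pieces and using the elementary inequalities $(1+\tfrac{1}{2p})(1-\tfrac{1}{p}) \le 1-\tfrac{1}{2p}$ and $1+2p \le 3p$ (both valid since $1/p \in (0,1]$ forces $p \ge 1$) gives
\begin{equation*}
\mathcal{M}_k \le \frac{3p}{bn}\sum_{i=1}^n \mathbb{E}\|\nabla f_i(x_{k+1}) - \nabla f_i(x_k)\|^2 + \left(1-\frac{1}{2p}\right)\mathcal{M}_{k-1},
\end{equation*}
which is exactly the MSEB bound with the claimed $M_1 = 3p/b$ and $\rho_M = 1/(2p)$.

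I expect the only delicate point to be the bookkeeping around the anchor: pinning down precisely which point $\phi_k$ is used when forming $\widetilde{\nabla}^{\textnormal{\tiny SVRG}}_{k+1}$ under the probabilistic full-gradient variant, and verifying that the refresh event is independent of the gradient differences, so that passing to expectations in the $(1-\tfrac{1}{p})$ step is legitimate. Everything else is a direct transcription of the SAGA argument with $b/n$ replaced by $1/p$; note in particular that substituting $p = n/b$ recovers the SAGA parameters of Lemma \ref{lem:varbound}, consistent with Proposition \ref{prop:saga}.
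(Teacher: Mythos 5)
Your proposal is correct and follows essentially the same route as the paper: the orthogonality identity of Lemma \ref{lem:orth} for the initial variance bound, the Young-type split with weights $(1+2p)$ and $(1+\tfrac{1}{2p})$, the refresh-probability computation giving the factor $(1-\tfrac{1}{p})$, and the same elementary estimates $1+2p \le 3p$ and $(1+\tfrac{1}{2p})(1-\tfrac{1}{p}) \le 1-\tfrac{1}{2p}$. The only cosmetic difference is that the paper carries an extra factor of $(1-1/p)$ inside its definition of $\mathcal{M}_k$ (applying the refresh probability a second time later in the recursion), whereas you drop it from the outset; both yield the same constants.
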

\begin{proof}
Following the same argument as in the proof of Lemma \ref{lem:firstvarbound}, we have the bound
\begin{equation}
    \mathbb{E} \|\widetilde{\nabla}^{\textnormal{\tiny SVRG}}_{k+1} - \nabla f(x_{k+1}) \|^2 \le \frac{1 - 1/p}{b n} \sum_{i=1}^n \mathbb{E} \| \nabla f_i(x_{k+1}) - \nabla f_i(\widetilde{x}) \|^2.
\end{equation}
The factor $1 - 1/p$ that appears is due to the fact that $\widetilde{\nabla}_{k+1} = \nabla f(x_{k+1})$ with probability $1/p$. With $\mathcal{M}_k = \frac{1 - 1/p}{b n} \sum_{i=1}^n \mathbb{E} \| \nabla f_i(x_{k+1}) - \nabla f_i(\widetilde{x}) \|^2$, we follow the proof of Lemma \ref{lem:varbound}.
\begin{align}
\label{eq:mainsvrgvar}
    \mathcal{M}_k = & \frac{1 - 1/p}{b n} \sum_{i=1}^n \mathbb{E} \| \nabla f_i(x_{k+1}) - \nabla f_i(\widetilde{x}) \|^2 \\
    \le & \frac{( 1 + 2 p ) ( 1 - 1/p )}{b n} \sum_{i=1}^n \mathbb{E} \| \nabla f_i(x_{k+1}) - \nabla f_i(x_k) \|^2 + \frac{(1+\frac{1}{2 p})(1 - 1/p)}{b n} \sum_{i=1}^n \mathbb{E} \| \nabla f_i(x_k) - \nabla f_i(\widetilde{x}) \|^2 \\
    %%%%%%%%%%%%%%%%%%%%%%%%%%%%%%%%%%%%%%%%%%%%%%%%%%%%%%%%%
    \symnum{1}{=} & \frac{(1+2 p)(1 - 1/p)}{b n} \sum_{i=1}^n \mathbb{E} \| \nabla f_i(x_{k+1}) - \nabla f_i(x_k) \|^2 + \frac{(1+\frac{1}{2 p})(1 - 1/p)^2}{b n} \sum_{i=1}^n \mathbb{E} \| \nabla f_i(x_k) - \nabla f_i(\widetilde{x})\|^2 \\
    %%%%%%%%%%%%%%%%%%%%%%%%%%%%%%%%%%%%%%%%%%%%%%%%%%%%%%%%%
    \le & \frac{3 p}{b n} \sum_{i=1}^n \mathbb{E} \| \nabla f_i(x_{k+1}) - \nabla f_i(x_k) \|^2 + \left( 1 - \frac{1}{2 p} \right) \mathcal{M}_{k-1}.
\end{align}
Equality \numcirc{1} follows from the fact that $\widetilde{x} = x_k$ with probability $1 / p$.
\end{proof}

With the MSEB property established for the SAGA and SVRG gradient estimators, we can apply Theorems \ref{thm:main1} and \ref{thm:main2} to get a rate of convergence. For the SAGA estimator, Lemma \ref{lem:varbound} ensures that the choices $c = \frac{96 n^2}{b^3}$ and $\rho = \frac{b}{2 n}$ satisfy the hypotheses of Theorems \ref{thm:main1} and \ref{thm:main2} as long as $b \le 4 \sqrt{2} n^{2/3}$. Similarly, for the SVRG estimator, the choices $c = \frac{b}{96 p^2}$ and $\rho = \frac{1}{2 p}$ satisfy the conditions of Theorems \ref{thm:main1} and \ref{thm:main2} as long as $b \le 32 p^2$.

\section{Proofs for SARAH}
\label{sec:sarah}

To prove the convergence rates of Theorem \ref{thm:sarah}, we first show that the SARAH gradient estimator satisfies the MSEB property.

\begin{lemma}
\label{lem:sarahmseb}
    The SARAH gradient estimator satisfies the MSEB property with $M_1 = 1$, $M_2 = 0$, $\rho_M = 1 / p$, $\rho_B = 1 / p$, and $\rho_F = 1$.
\end{lemma}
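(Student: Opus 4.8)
The plan is to verify the three ingredients of the MSEB property directly from the recursive definition of the SARAH estimator, treating the coin flip (full gradient with probability $1/p$) and the random minibatch $J_k$ as the only new randomness entering at step $k+1$. Since we claim $M_2 = 0$ and $\rho_F = 1$, I would simply take $\mathcal{F}_k \equiv 0$, so the bound on $\mathcal{F}_k$ holds vacuously; the real work is the bias identity and the recursive MSE bound. Throughout I would condition on the $\sigma$-algebra generated by the first $k$ iterations, under which $x_{k+1}$, the gradient differences $d_i \defeq \nabla f_i(x_{k+1}) - \nabla f_i(x_k)$, and the previous error $\widetilde{\nabla}_k - \nabla f(x_k)$ are all deterministic.

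For the bias, I would split $\E \widetilde{\nabla}_{k+1}$ according to the coin flip and use that the minibatch average is unbiased, namely $\mathbb{E}_{J_k}\frac{1}{b}\sum_{j \in J_k} d_j = \frac{1}{n}\sum_{i=1}^n d_i = \nabla f(x_{k+1}) - \nabla f(x_k)$. This yields
\[
\E \widetilde{\nabla}_{k+1} = \left(1 - \tfrac{1}{p}\right)\left(\nabla f(x_{k+1}) - \nabla f(x_k) + \widetilde{\nabla}_k\right) + \tfrac{1}{p}\,\nabla f(x_{k+1}),
\]
and subtracting from $\nabla f(x_{k+1})$ the coefficients of $\nabla f(x_{k+1})$ cancel exactly, leaving $\nabla f(x_{k+1}) - \E\widetilde{\nabla}_{k+1} = (1-\tfrac{1}{p})(\nabla f(x_k) - \widetilde{\nabla}_k)$. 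This is precisely the required bias relation with $\rho_B = 1/p$.

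The MSE bound is the crux. On the full-gradient branch the error vanishes, so only the recursive branch (probability $1-1/p$) contributes. There I would rewrite the error using the telescoping cancellation $\frac{1}{n}\sum_i d_i = \nabla f(x_{k+1}) - \nabla f(x_k)$ as
\[
\widetilde{\nabla}_{k+1} - \nabla f(x_{k+1}) = \left(\frac{1}{b}\sum_{j \in J_k} d_j - \frac{1}{n}\sum_{i=1}^n d_i\right) + \left(\widetilde{\nabla}_k - \nabla f(x_k)\right).
\]
The first bracket has mean zero over $J_k$ while the second is deterministic given the conditioning, so the cross term vanishes and the conditional MSE splits as the minibatch variance plus $\|\widetilde{\nabla}_k - \nabla f(x_k)\|^2$. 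Bounding the minibatch variance by $\frac{1}{bn}\sum_i\|d_i\|^2$ exactly as in Lemma \ref{lem:firstvarbound}, and reinstating the factor $1-1/p$, I obtain after taking full expectation and setting $\mathcal{M}_k \defeq \mathbb{E}\|\widetilde{\nabla}_{k+1} - \nabla f(x_{k+1})\|^2$ the recursion
\[
\mathcal{M}_k \le \frac{1-1/p}{b}\cdot\frac{1}{n}\sum_{i=1}^n\mathbb{E}\|\nabla f_i(x_{k+1}) - \nabla f_i(x_k)\|^2 + \left(1-\tfrac{1}{p}\right)\mathcal{M}_{k-1}.
\]
Since $(1-1/p)/b \le 1$ and $1-1/p = 1-\rho_M$ for $\rho_M = 1/p$, this is dominated by the MSEB bound with $M_1 = 1$, $M_2 = 0$, and $\rho_M = 1/p$, completing the verification.

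The main obstacle is the telescoping decomposition and the justification of the orthogonal split in the MSE step. Everything hinges on recognising that the recursive branch's error is exactly the previous error plus a fresh mean-zero minibatch deviation, which requires both the exact cancellation $\frac{1}{n}\sum_i d_i = \nabla f(x_{k+1}) - \nabla f(x_k)$ and the measurability of $\widetilde{\nabla}_k - \nabla f(x_k)$ with respect to the conditioning $\sigma$-algebra so that it acts as a constant when the cross term is taken. Once this is in place, the remaining steps — the standard minibatch variance bound and the bookkeeping of the $(1-1/p)$ factor — are routine.
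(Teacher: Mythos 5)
Your proof is correct, and the bias computation is identical to the paper's. Where you diverge is in the MSE step: the paper follows the original SARAH analysis, expanding $\|\widetilde{\nabla}_{k+1} - \nabla f(x_{k+1})\|^2$ as the squared norm of the three-term sum $(\widetilde{\nabla}_k - \nabla f(x_k)) + (\nabla f(x_k) - \nabla f(x_{k+1})) + (\widetilde{\nabla}_{k+1} - \widetilde{\nabla}_k)$, computing the three cross terms one by one, discarding a helpful negative term, and finally bounding $\mathbb{E}_{k,p}\|\widetilde{\nabla}_{k+1} - \widetilde{\nabla}_k\|^2$ by $\frac{1}{n}\sum_i\|\nabla f_i(x_{k+1}) - \nabla f_i(x_k)\|^2$ via Jensen's inequality, which yields $M_1 = 1$. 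You instead group the last two of those three terms into the single mean-zero minibatch deviation $\frac{1}{b}\sum_{j\in J_k} d_j - \frac{1}{n}\sum_i d_i$ and invoke bias--variance orthogonality once against the $\mathcal{F}_k$-measurable previous error. This collapses the inner-product bookkeeping into one line and, because you bound a genuine variance rather than a second moment, produces the tighter coefficient $\frac{1-1/p}{b}$ in place of the paper's $1$ (the same mechanism as the paper's own Lemma \ref{lem:firstvarbound} for SAGA). Since the lemma only claims $M_1 = 1$, both routes certify the stated MSEB parameters; yours would additionally let one shave a factor of $b$ off $M_1$ for minibatch SARAH, which the paper's Jensen step forfeits. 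Your handling of the coin flip (zero error on the full-gradient branch, reinstating the factor $1 - 1/p$ afterwards) and the vacuous choice $\mathcal{F}_k \equiv 0$ match the paper exactly.
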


\begin{proof}
 
The SARAH gradient estimator is equal to $\nabla f(x_{k+1})$ with probability $1 / p$, so the expectation of the SARAH gradient estimator is
\begin{align}
    \E \widetilde{\nabla}^{\textnormal{\tiny SARAH}}_{k+1} &= \frac{1}{p} \nabla f(x_{k+1}) + \left(1 - \frac{1}{p}\right) \left( \frac{1}{b} \E \left(\sum_{j \in J_k} \nabla f_j(x_{k+1}) - \nabla f_j(x_k) \right) + \widetilde{\nabla}^{\textnormal{\tiny SARAH}}_k \right)
    \\
    & = \frac{1}{p} \nabla f(x_{k+1}) + \left( 1 - \frac{1}{p} \right) \left( \nabla f(x_{k+1}) - \nabla f(x_k) + \widetilde{\nabla}^{\textnormal{\tiny SARAH}}_k \right)
\end{align}
Therefore,
\begin{align}
    \nabla f(x_{k+1}) - \E \widetilde{\nabla}^{\textnormal{\tiny SARAH}}_{k+1} = \left(1-\frac{1}{p}\right) \left( \nabla f(x_k) - \widetilde{\nabla}^{\textnormal{\tiny SARAH}}_k \right),
\end{align}
so $\rho_B = 1 / p$. Next, we prove a bound on the MSE. Let $\mathbb{E}_{k,p}$ denote the expectation conditioned on the first $k$ iterations and the event that the full gradient is not computed at iteration $k+1$. Under the condition that the full gradient is not computed, the expectation of the SARAH estimator is
\begin{align}
    \mathbb{E}_{k,p} \widetilde{\nabla}^{\textnormal{\tiny SARAH}}_{k+1} &= \frac{1}{b} \mathbb{E}_{k,p} \left(\sum_{j \in J_k} \nabla f_j(x_{k+1}) - \nabla f_j(x_k) \right) + \widetilde{\nabla}^{\textnormal{\tiny SARAH}}_k
    \\
    & = \nabla f(x_{k+1}) - \nabla f(x_k) + \widetilde{\nabla}^{\textnormal{\tiny SARAH}}_k
\end{align}

The beginning of our proof is similar to the proof of the MSE bound in \cite[Lem. 2]{sarah}.
\begin{align}
    & \mathbb{E}_{k,p} \|\widetilde{\nabla}^{\textnormal{\tiny SARAH}}_{k+1} - \nabla f(x_{k+1})\|^2 \notag \\
    = & \mathbb{E}_{k,p} \left\| \widetilde{\nabla}^{\textnormal{\tiny SARAH}}_k - \nabla f(x_k) + \nabla f(x_k) - \nabla f(x_{k+1}) + \widetilde{\nabla}^{\textnormal{\tiny SARAH}}_{k+1} - \widetilde{\nabla}^{\textnormal{\tiny SARAH}}_k \right\|^2 \\
    = & \left\| \widetilde{\nabla}^{\textnormal{\tiny SARAH}}_k - \nabla f(x_k) \right\|^2 + \left\|\nabla f(x_k) - \nabla f(x_{k+1}) \right\|^2 + \mathbb{E}_{k,p} \left\| \widetilde{\nabla}^{\textnormal{\tiny SARAH}}_{k+1} - \widetilde{\nabla}^{\textnormal{\tiny SARAH}}_k \right\|^2 \notag \\
    & + 2 \langle \nabla f(x_k ) - \widetilde{\nabla}^{\textnormal{\tiny SARAH}}_k, \nabla f(x_{k+1}) - \nabla f(x_k) \rangle \\
    & \quad - 2 \left\langle \nabla f(x_k ) - \widetilde{\nabla}^{\textnormal{\tiny SARAH}}_k, \mathbb{E}_{k,p} \left[ \widetilde{\nabla}^{\textnormal{\tiny SARAH}}_{k+1} - \widetilde{\nabla}^{\textnormal{\tiny SARAH}}_k \right] \right\rangle \\
    & \quad \quad - 2 \left\langle \nabla f(x_{k+1}) - \nabla f(x_k), \mathbb{E}_{k,p} \left[ \widetilde{\nabla}^{\textnormal{\tiny SARAH}}_{k+1} - \widetilde{\nabla}^{\textnormal{\tiny SARAH}}_k \right] \right\rangle.
\end{align}
We consider each inner product separately. The first inner product is equal to
\begin{align}
    & 2 \langle \nabla f(x_k ) - \widetilde{\nabla}^{\textnormal{\tiny SARAH}}_k, \nabla f(x_{k+1}) - \nabla f(x_k) \rangle \notag \\
    = & - \|\nabla f(x_k ) - \widetilde{\nabla}^{\textnormal{\tiny SARAH}}_k\|^2 - \|\nabla f(x_{k+1}) - \nabla f(x_k)\|^2 + \|\nabla f(x_{k+1}) - \widetilde{\nabla}^{\textnormal{\tiny SARAH}}_k\|^2.
\end{align}
For the next two inner products, we use the fact that
\begin{align}
    & \mathbb{E}_{k,p}[\widetilde{\nabla}^{\textnormal{\tiny SARAH}}_{k+1} - \widetilde{\nabla}^{\textnormal{\tiny SARAH}}_k] = \nabla f(x_{k+1}) - \nabla f(x_k).
\end{align}
With this equality established, we see that the second inner product is equal to
\begin{align}
    & - 2 \left\langle \nabla f(x_k) - \widetilde{\nabla}^{\textnormal{\tiny SARAH}}_k, \mathbb{E}_{k,p} \left[ \widetilde{\nabla}^{\textnormal{\tiny SARAH}}_{k+1} - \widetilde{\nabla}^{\textnormal{\tiny SARAH}}_k \right] \right\rangle \notag \\
    %%%%%%%%%%%%%%%%%%%%%%%%%%%%%%%%%%%%%%%%%%%%%%%%%%%%%%%%%%%%%%%
    = & - 2 \langle \nabla f(x_k) - \widetilde{\nabla}^{\textnormal{\tiny SARAH}}_k, \nabla f(x_{k+1}) - \nabla f(x_k) \rangle \\
    =& \| \nabla f(x_k ) - \widetilde{\nabla}^{\textnormal{\tiny SARAH}}_k \|^2 + \| \nabla f(x_{k+1}) - \nabla f(x_k) \|^2 - \|\nabla f(x_{k+1}) - \widetilde{\nabla}^{\textnormal{\tiny SARAH}}_k \|^2.
\end{align}
The third inner product can be bounded using a similar procedure.
\begin{align}
    & - 2 \left\langle \nabla f(x_{k+1}) - \nabla f(x_k), \mathbb{E}_{k,p} \left[ \widetilde{\nabla}^{\textnormal{\tiny SARAH}}_{k+1} - \widetilde{\nabla}^{\textnormal{\tiny SARAH}}_k \right] \right\rangle \notag \\
    %%%%%%%%%%%%%%%%%%%%%%%%%%%%%%%%%%%%%%%%%%%%%%%%%%%%
    = & - 2 \langle \nabla f(x_{k+1}) - \nabla f(x_k), \nabla f(x_{k+1}) - \nabla f(x_k) \rangle \\
    %%%%%%%%%%%%%%%%%%%%%%%%%%%%%%%%%%%%%%%%%%%%%%%%%%%%%
    = & - 2 \| \nabla f(x_{k+1}) - \nabla f(x_k) \|^2.
\end{align}
Altogether, we have
\begin{align}
    & \mathbb{E}_{k,p} \|\widetilde{\nabla}^{\textnormal{\tiny SARAH}}_{k+1} - \nabla f(x_{k+1})\|^2 \\
    %%%%%%%%%%%%%%%%%%%%%%%%%%%%%%%%%%%%%%%%%%%%%%%%%%%%%%
    \le & \left\| \widetilde{\nabla}^{\textnormal{\tiny SARAH}}_k - \nabla f(x_k) \right\|^2 - \|\nabla f(x_{k+1}) - \nabla f(x_k)\|^2 + \mathbb{E}_{k,p} \| \widetilde{\nabla}^{\textnormal{\tiny SARAH}}_{k+1} - \widetilde{\nabla}^{\textnormal{\tiny SARAH}}_k\|^2 \\
    %%%%%%%%%%%%%%%%%%%%%%%%%%%%%%%%%%%%%%%%%%%%%%%%%%%%%%
    \le & \left\| \widetilde{\nabla}^{\textnormal{\tiny SARAH}}_k - \nabla f(x_k) \right\|^2 + \mathbb{E}_{k,p} \| \widetilde{\nabla}^{\textnormal{\tiny SARAH}}_{k+1} - \widetilde{\nabla}^{\textnormal{\tiny SARAH}}_k\|^2.
\end{align}

For the second term,
\begin{align}
    \mathbb{E}_{k,p} \| \widetilde{\nabla}^{\textnormal{\tiny SARAH}}_{k+1} - \widetilde{\nabla}^{\textnormal{\tiny SARAH}}_k\|^2 = & \mathbb{E}_{k,p} \left\| \frac{1}{b} \left(\sum_{j \in J_k} \nabla f_j(x_{k+1}) - \nabla f_j(x_k) \right) \right\|^2\\
    \le & \frac{1}{b} \mathbb{E}_{k,p} \left[\sum_{j \in J_k} \| \nabla f_j(x_{k+1}) - \nabla f_j(x_k)\|^2 \right] \\
    = & \frac{1}{n} \sum_{i=1}^n \| \nabla f_i(x_{k+1}) - \nabla f_i(x_k)\|^2.
\end{align}
The inequality is Jensen's. This results in the recursive inequality
\begin{align}
    & \mathbb{E}_{k,p} \|\widetilde{\nabla}^{\textnormal{\tiny SARAH}}_{k+1} - \nabla f(x_{k+1})\|^2 \notag \\
    \le & \left\| \widetilde{\nabla}^{\textnormal{\tiny SARAH}}_k - \nabla f(x_k) \right\|^2 + \frac{1}{n} \sum_{i=1}^n \| \nabla f_i(x_{k+1}) - \nabla f_i(x_k)\|^2.
\end{align}
This provides a bound on the MSE under the condition that the full gradient is not computed at iteration $k$. If the full gradient is computed, the MSE of the estimator is clearly equal to zero, so applying the full expectation operator yields
\begin{align}
    & \mathbb{E} \|\widetilde{\nabla}^{\textnormal{\tiny SARAH}}_{k+1} - \nabla f(x_{k+1})\|^2 \notag \\
    \le & \left(1 - \frac{1}{p} \right) \left( \mathbb{E} \left\| \widetilde{\nabla}^{\textnormal{\tiny SARAH}}_k - \nabla f(x_k) \right\|^2 + \frac{1}{n} \sum_{i=1}^n \mathbb{E} \| \nabla f_i(x_{k+1}) - \nabla f_i(x_k)\|^2 \right).
\end{align}
With $\mathcal{M}_k = \mathbb{E} \|\widetilde{\nabla}^{\textnormal{\tiny SARAH}}_{k+1} - \nabla f(x_{k+1})\|^2$, it is clear that we can take $M_1 = 1, \rho_M = 1 / p$, $M_2 = 0$, and $\rho_F = 1$.
\end{proof}

With these MSEB constants established, convergence rates easily follow from Theorems \ref{thm:main1} and \ref{thm:main2} with $c = 144 p^4$ and $\rho = 1 / p$.

\section{Proofs for SARGE}
\label{sec:sarge}

For the proofs in this section, we rewrite the SARGE gradient estimator in terms of the SAGA estimator to make the analysis easier to follow. Define the operator
\begin{equation}
    \widetilde{\nabla}^{\xi\textnormal{\tiny-SAGA}}_{k+1} \defeq \frac{1}{b} \left(\sum_{j \in J_k} \nabla f_j(x_k) - \nabla f_j(\xi^j_k) \right) + \frac{1}{n} \sum_{i=1}^n \nabla f_i(\xi^i_k),
\end{equation}
where the variables $\{\xi_k^i\}_{i=1}^n$ follow the update rules $\xi_{k+1}^j = x_k$ for all $j \in J_k$ and $\xi_{k+1}^i = \xi_k^i$ for all $i \not \in J_k$. The SARGE estimator is equal to
\begin{equation}
    \widetilde{\nabla}^{\textnormal{\tiny SARGE}}_{k+1} = \widetilde{\nabla}^{\textnormal{\tiny SAGA}}_{k+1} - \left(1 - \frac{b}{n} \right) \left( \widetilde{\nabla}^{\xi\textnormal{\tiny-SAGA}}_{k+1} - \widetilde{\nabla}^{\textnormal{\tiny SARGE}}_{k+1} \right).
\end{equation}
Before we begin, we require a bound on the MSE of the $\xi$-SAGA gradient estimator that follows immediately from Lemma \ref{lem:varbound}.

\begin{lemma}
\label{lem:sagaforsarge}
    The MSE of the $\xi$-SAGA gradient estimator satisfies the following bound:
    \begin{equation}
        \mathbb{E} \left\|\widetilde{\nabla}^{\xi\textnormal{\tiny-SAGA}}_{k+1} - \nabla f(x_k) \right\|^2 \le \frac{3}{b^2} \sum_{\ell = 1}^k \left(1 - \frac{b}{2 n} \right)^{k - \ell} \sum_{i=1}^n \mathbb{E} \|\nabla f_i(x_{\ell}) - \nabla f_i(x_{\ell-1}) \|^2.
    \end{equation}
\end{lemma}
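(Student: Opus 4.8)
The plan is to exploit the structural identity between the $\xi$-SAGA estimator and the ordinary SAGA estimator: $\widetilde{\nabla}^{\xi\textnormal{\tiny-SAGA}}_{k+1}$ is precisely the SAGA estimator evaluated at $x_k$ rather than at $x_{k+1}$, with the stored reference points $\xi_k^i$ playing the role of $\varphi_k^i$ and obeying the same ``snapshot-on-sample'' update rule ($\xi_{k+1}^j = x_k$ for $j \in J_k$, unchanged otherwise). Consequently, the whole variance analysis behind Lemma \ref{lem:varbound} transfers after the index shift $x_{k+1} \mapsto x_k$ and $\varphi \mapsto \xi$, which is exactly why the result is said to follow immediately from Lemma \ref{lem:varbound}.

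First I would reproduce the argument of Lemma \ref{lem:firstvarbound}. Writing $X_i = \nabla f_j(x_k) - \nabla f_j(\xi_k^j)$ for $j \in J_k$ and noting that $\tfrac{1}{n}\sum_{i=1}^n \nabla f_i(\xi_k^i) = \mathbb{E} X_i$, Lemma \ref{lem:orth} cancels the cross terms exactly as in the proof of Lemma \ref{lem:firstvarbound}, yielding the Markovian variance bound
\begin{equation}
    \mathbb{E}\left\|\widetilde{\nabla}^{\xi\textnormal{\tiny-SAGA}}_{k+1} - \nabla f(x_k)\right\|^2 \le \frac{1}{bn}\sum_{i=1}^n \mathbb{E}\|\nabla f_i(x_k) - \nabla f_i(\xi_k^i)\|^2 =: \mathcal{M}_k^\xi.
\end{equation}

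Next I would derive the one-step recursion for $\mathcal{M}_k^\xi$ exactly as in \eqref{eq:mainsagavar}. Applying the split $\|a-c\|^2 \le (1+\tfrac{2n}{b})\|a-b\|^2 + (1+\tfrac{b}{2n})\|b-c\|^2$ with $a = \nabla f_i(x_k)$, $b = \nabla f_i(x_{k-1})$, and $c = \nabla f_i(\xi_k^i)$, then taking expectations over $J_{k-1}$ so that the $\xi$-update contributes the factor $1 - \tfrac{b}{n}$ to the ``old'' term $\sum_i \mathbb{E}\|\nabla f_i(x_{k-1}) - \nabla f_i(\xi_{k-1}^i)\|^2 = bn\,\mathcal{M}_{k-1}^\xi$, and finally invoking the two scalar inequalities $(1+\tfrac{b}{2n})(1-\tfrac{b}{n}) \le 1 - \tfrac{b}{2n}$ and $1 + \tfrac{2n}{b} \le \tfrac{3n}{b}$, I obtain
\begin{equation}
    \mathcal{M}_k^\xi \le \frac{3}{b^2}\sum_{i=1}^n \mathbb{E}\|\nabla f_i(x_k) - \nabla f_i(x_{k-1})\|^2 + \left(1 - \frac{b}{2n}\right)\mathcal{M}_{k-1}^\xi.
\end{equation}

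Finally I would unroll this geometric recursion down to the base case $\mathcal{M}_0^\xi = 0$, which holds because all reference points are initialised at $x_0$, so $\xi_0^i = x_0$; accumulating the factors $(1-\tfrac{b}{2n})^{k-\ell}$ then produces the claimed closed form. The main obstacle — really the only point requiring care — is the bookkeeping of the index shift: one must verify that when expressing $\mathcal{M}_k^\xi$ through $\mathcal{M}_{k-1}^\xi$ the $\xi$-update snapshots to $x_{k-1}$ (not $x_k$), so that the correct one-step-lagged increment $\|\nabla f_i(x_\ell) - \nabla f_i(x_{\ell-1})\|^2$ is what appears under the sum, together with confirming the independence of $\xi_{k-1}^i$ from the event $i \in J_{k-1}$. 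No analytic idea beyond the SAGA proof is needed.
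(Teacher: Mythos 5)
Your proposal is correct and follows essentially the same route as the paper: apply Lemma \ref{lem:orth} to get the Markovian bound $\frac{1}{bn}\sum_{i}\mathbb{E}\|\nabla f_i(x_k)-\nabla f_i(\xi_k^i)\|^2$, then run the recursion of Lemma \ref{lem:varbound} with the same splitting inequality, the same factor $(1-\tfrac{b}{n})$ from the $\xi$-update, the same scalar estimates $(1+\tfrac{b}{2n})(1-\tfrac{b}{n})\le 1-\tfrac{b}{2n}$ and $1+\tfrac{2n}{b}\le\tfrac{3n}{b}$, and unroll. Your attention to the index shift (that $\xi_k^j$ snapshots to $x_{k-1}$ for $j\in J_{k-1}$) is exactly the point the paper's proof relies on in its expectation computation.
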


\begin{proof}
Following the proof of Lemma \ref{lem:firstvarbound},
\begin{align}
    & \E\left\|\widetilde{\nabla}^{\xi\textnormal{\tiny-SAGA}}_{k+1} - \nabla f(x_k) \right\|^2 \notag \\
    = & \E \left\|\frac{1}{b} \sum_{j \in J_k} \left(\nabla f_j(x_k) - \nabla f_j(\xi_k^j)\right) - \nabla f(x_k) + \frac{1}{n} \sum_{i=1}^n \nabla f_i(\xi_k^i) \right\|^2 \\
    %%%%%%%%%%%%%%%%%%%%%%%%%%%%%%%%%%%%%%%
    % \symnum{1}{\le} & \frac{1}{b n} \sum_{i=1}^n \left\|\nabla f_i(x_k) - \nabla f_i(\xi_k^i) \right\|^2 - \left\| \nabla f(x_k) - \frac{1}{n} \sum_{i=1}^n \nabla f_i(\xi_k^i) \right\|^2 \\
    %%%%%%%%%%%%%%%%%%%%%%%%%%%%%%%%%%%%%%%%%%%%
    \symnum{1}{=} & \frac{1}{b n} \sum_{i=1}^n \left\|\nabla f_i(x_k) - \nabla f_i(\xi_k^i) \right\|^2.
\end{align}
Equality \numcirc{1} is an application of Lemma \ref{lem:orth}. To continue, we follow the proof of Lemma \ref{lem:varbound}.
\begin{align}
    & \mathbb{E} \left\|\widetilde{\nabla}^{\xi\textnormal{\tiny-SAGA}}_{k+1} - \nabla f(x_k) \right\|^2 \\
    %%%%%%%%%%%%%%%%%%%%%%%%%%%%%%%%%%%%%%%%%%%%%%%%%
    \le & \frac{1}{b n} \sum_{i=1}^n \mathbb{E} \left\|\nabla f_i(x_k) - \nabla f_i(\xi_k^i) \right\|^2 \\
    %%%%%%%%%%%%%%%%%%%%%%%%%%%%%%%%%%%%%%%%%%%%%%%%%%%%%%%
    \le & \frac{(1 + \frac{2 n}{b})}{b n} \sum_{i=1}^n\mathbb{E} \|\nabla f_i(x_k) - \nabla f_i(x_{k-1}) \|^2 + \frac{1}{b n} \left( 1 + \frac{b}{2 n} \right) \sum_{i=1}^n \mathbb{E} \left\| \nabla f_i(x_{k-1}) - \nabla f_i(\xi_k^i) \right\|^2 \\
    %%%%%%%%%%%%%%%%%%%%%%%%%%%%%%%%%%%%%%%%%%%%%%%%%%%%%%%%%
    \symnum{2}{=} & \frac{(1 + \frac{2 n}{b})}{b n} \sum_{i=1}^n\mathbb{E} \|\nabla f_i(x_k) - \nabla f_i(x_{k-1}) \|^2 + \frac{1}{b n} \left( 1 + \frac{b}{2 n} \right) \left(1 - \frac{b}{n}\right) \sum_{i=1}^n \mathbb{E} \left\| \nabla f_i(x_{k-1}) - \nabla f_i(\xi_{k-1}^i) \right\|^2 \\
    %%%%%%%%%%%%%%%%%%%%%%%%%%%%%%%%%%%%%%%%%%%%%%%%%%%%
    \symnum{3}{\le} & \frac{3}{b^2} \sum_{i=1}^n\mathbb{E} \|\nabla f_i(x_k) - \nabla f_i(x_{k-1}) \|^2 + \frac{1}{b n} \left( 1 - \frac{b}{2 n} \right) \sum_{i=1}^n \mathbb{E} \left\| \nabla f_i(x_{k-1}) - \nabla f_i(\xi_{k-1}^i) \right\|^2 \\
    %%%%%%%%%%%%%%%%%%%%%%%%%%%%%%%%%%%%%%%%%%%%%%%%
    \le & \frac{3}{b^2} \sum_{\ell = 1}^k \left(1 - \frac{b}{2 n} \right)^{k - \ell} \sum_{i=1}^n \mathbb{E} \|\nabla f_i(x_{\ell}) - \nabla f_i(x_{\ell-1}) \|^2.
\end{align}
Equality \numcirc{2} follows from computing expectations, 
and \numcirc{3} uses the estimate $\left(1-\frac{b}{n}\right)\left(1+\frac{b}{2n}\right) \le \left(1-\frac{b}{2n}\right)$.
\end{proof}

Due to the recursive nature of the SARGE gradient estimator, its MSE depends on the difference between the current estimate and the estimate from the previous iteration. This is true for the recursive SARAH gradient estimate as well, but bounding the quantity $\|\widetilde{\nabla}^{\textnormal{\tiny SARAH}}_k - \widetilde{\nabla}^{\textnormal{\tiny SARAH}}_{k-1}\|^2$ is a much more straightforward task than bounding the same quantity for the SARGE estimator. The next lemma provides this bound.

\begin{lemma}
\label{lem:vbound}
    The SARGE gradient estimator satisfies the following bound:
    \begin{align}
        & \mathbb{E} \| \widetilde{\nabla}^{\textnormal{\tiny SARGE}}_{k+1} - \widetilde{\nabla}^{\textnormal{\tiny SARGE}}_k\|^2 \notag \\
        \le & \frac{27 + 12 b}{n^2} \sum_{\ell=1}^k \left( 1 - \frac{b}{2 n} \right)^{k - \ell} \sum_{i=1}^n \mathbb{E} \|\nabla f_i(x_\ell) - \nabla f_i(x_{\ell - 1}) \|^2 \notag \\
        & + \frac{12}{n} \sum_{i=1}^n \mathbb{E} \|\nabla f_i(x_{k+1}) - \nabla f_i(x_k) \|^2 + \frac{3 b^2}{2 n^2} \mathbb{E} \left\|\nabla f(x_k) - \widetilde{\nabla}^{\textnormal{\tiny SARGE}}_k \right\|^2.
    \end{align}
\end{lemma}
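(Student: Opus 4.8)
The plan is to work from the rewriting of the SARGE estimator in terms of the SAGA and $\xi$-SAGA estimators. Subtracting $\widetilde{\nabla}^{\textnormal{\tiny SARGE}}_k$ from both sides gives $\widetilde{\nabla}^{\textnormal{\tiny SARGE}}_{k+1} - \widetilde{\nabla}^{\textnormal{\tiny SARGE}}_k = \widetilde{\nabla}^{\textnormal{\tiny SAGA}}_{k+1} - (1-\tfrac{b}{n})\widetilde{\nabla}^{\xi\textnormal{\tiny-SAGA}}_{k+1} - \tfrac{b}{n}\widetilde{\nabla}^{\textnormal{\tiny SARGE}}_k$. Since $x_{k+1}$ and all stored quantities are fixed once we condition on the first $k$ iterations, the only randomness is the minibatch $J_k$, and the $J_k$-dependent part is $\tfrac{1}{b}\sum_{j \in J_k} d_j$ with per-sample term $d_j = [\nabla f_j(x_{k+1}) - \nabla f_j(\varphi_k^j)] - (1-\tfrac{b}{n})[\nabla f_j(x_k) - \nabla f_j(\xi_k^j)]$. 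Using $\mathbb{E}_k \widetilde{\nabla}^{\textnormal{\tiny SAGA}}_{k+1} = \nabla f(x_{k+1})$ and $\mathbb{E}_k \widetilde{\nabla}^{\xi\textnormal{\tiny-SAGA}}_{k+1} = \nabla f(x_k)$, the conditional mean of the whole difference collapses to $[\nabla f(x_{k+1}) - \nabla f(x_k)] + \tfrac{b}{n}[\nabla f(x_k) - \widetilde{\nabla}^{\textnormal{\tiny SARGE}}_k]$.

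I would then apply the bias--variance split $\mathbb{E}\|X\|^2 = \mathbb{E}\|X - \mathbb{E}_k X\|^2 + \mathbb{E}\|\mathbb{E}_k X\|^2$. For the mean part, Jensen's inequality turns $\|\nabla f(x_{k+1}) - \nabla f(x_k)\|^2$ into $\tfrac{1}{n}\sum_i \|\nabla f_i(x_{k+1}) - \nabla f_i(x_k)\|^2$, and a weighted Young's inequality (splitting with weights $3$ and $\tfrac{3}{2}$) produces exactly the coefficient $\tfrac{3 b^2}{2 n^2}$ on $\|\nabla f(x_k) - \widetilde{\nabla}^{\textnormal{\tiny SARGE}}_k\|^2$ together with a contribution to the $\tfrac{12}{n}$ term. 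For the variance part, Lemma \ref{lem:orth} reduces it to $\tfrac{1}{bn}\sum_{i=1}^n \mathbb{E}\|d_i\|^2$, just as in the proof of Lemma \ref{lem:firstvarbound}.

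The crucial step is to estimate $\tfrac{1}{bn}\sum_i \|d_i\|^2$ while preserving the $(1-\tfrac{b}{n})$ cancellation hidden in $d_i$. Inserting $\nabla f_i(x_k)$, I would rewrite $d_i = [\nabla f_i(x_{k+1}) - \nabla f_i(x_k)] - [\nabla f_i(\varphi_k^i) - \nabla f_i(\xi_k^i)] + \tfrac{b}{n}[\nabla f_i(x_k) - \nabla f_i(\xi_k^i)]$ and bound $\|d_i\|^2$ by three times the squared norms. The first piece yields a bare $\tfrac{1}{n}\sum_i \|\nabla f_i(x_{k+1}) - \nabla f_i(x_k)\|^2$. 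The third piece carries the factor $\tfrac{b^2}{n^2}$, and $\tfrac{1}{bn}\sum_i \|\nabla f_i(x_k) - \nabla f_i(\xi_k^i)\|^2$ is exactly the $\xi$-SAGA MSE, bounded in Lemma \ref{lem:sagaforsarge} by $\tfrac{3}{b^2}$ times a geometric sum; the $\tfrac{b^2}{n^2}$ prefactor cancels the $\tfrac{1}{b^2}$ and leaves the required $\tfrac{1}{n^2}$ scaling. The middle piece is the stored consecutive difference $\mathcal{G}_k = \tfrac{1}{n}\sum_i \|\nabla f_i(\varphi_k^i) - \nabla f_i(\xi_k^i)\|^2$; because $\varphi_k^i$ and $\xi_k^i$ store $x_{m+1}$ and $x_m$ for the last iteration $m$ at which $i$ was drawn, $\mathcal{G}_k$ satisfies the exact recursion $\mathcal{G}_k = \tfrac{b}{n}\cdot\tfrac{1}{n}\sum_i \|\nabla f_i(x_k) - \nabla f_i(x_{k-1})\|^2 + (1-\tfrac{b}{n})\mathcal{G}_{k-1}$, which unrolls to a geometric sum in $\|\nabla f_i(x_\ell) - \nabla f_i(x_{\ell-1})\|^2$; the leading $\tfrac{b}{n}$ combines with the $\tfrac{1}{b}$ from $\tfrac{1}{bn}\sum_i$ to again give $\tfrac{1}{n^2}$, now in precisely the index form appearing in the claim.

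The main obstacle is this cancellation. A naive triangle-inequality bound $\|d_i\|^2 \le 2\|\nabla f_i(x_{k+1}) - \nabla f_i(\varphi_k^i)\|^2 + 2\|\nabla f_i(x_k) - \nabla f_i(\xi_k^i)\|^2$ leaves both pieces at order $\tfrac{1}{b^2}$ and overshoots the target by a factor of $n^2/b^2$; one must split off the $\tfrac{b}{n}$ factor and keep the two stored estimators together before estimating. The only remaining bookkeeping is to dominate the two decay rates $(1-\tfrac{b}{n})$ and $(1-\tfrac{b}{2n})$ by the single slower rate $(1-\tfrac{b}{2n})$ and to collect constants, which the deliberately loose factors $\tfrac{27+12b}{n^2}$ and $\tfrac{12}{n}$ in the statement absorb.
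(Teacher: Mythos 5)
Your proposal is correct, and it reaches the same bound by a somewhat different organisation of the argument. The paper never forms the per-sample quantity $d_j$ or the exact conditional decomposition $\mathbb{E}\|X\|^2 = \mathbb{E}\|X-\mathbb{E}_k X\|^2 + \mathbb{E}\|\mathbb{E}_k X\|^2$; instead it applies the weighted Young inequality twice at the level of whole estimators, splitting $\widetilde{\nabla}^{\textnormal{\tiny SARGE}}_{k+1} - \widetilde{\nabla}^{\textnormal{\tiny SARGE}}_k$ into $\bigl\|\widetilde{\nabla}^{\textnormal{\tiny SAGA}}_{k+1} - \widetilde{\nabla}^{\xi\textnormal{\tiny-SAGA}}_{k+1}\bigr\|^2$ (coefficient $6$), $\bigl\|\widetilde{\nabla}^{\xi\textnormal{\tiny-SAGA}}_{k+1} - \nabla f(x_k)\bigr\|^2$ (coefficient $9b^2/n^2$, then Lemma \ref{lem:sagaforsarge}), and $\bigl\|\nabla f(x_k) - \widetilde{\nabla}^{\textnormal{\tiny SARGE}}_k\bigr\|^2$ (coefficient $3b^2/2n^2$), and only afterwards expands the first of these via the variance decomposition and the last-time-sampled computation for $\|\nabla f_j(\varphi_k^j)-\nabla f_j(\xi_k^j)\|^2$. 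The essential content is identical in both routes: you and the paper both keep $\varphi_k^i$ and $\xi_k^i$ paired so that their gradient difference is a single consecutive difference $\nabla f_i(x_m)-\nabla f_i(x_{m-1})$ unrolling into a geometric sum, both invoke Lemma \ref{lem:orth} for the minibatch variance and Lemma \ref{lem:sagaforsarge} for the $\xi$-SAGA MSE, and both isolate the $b/n$ prefactor on the SARGE-error term before squaring. Your exact bias--variance split is arguably cleaner and yields somewhat smaller constants (roughly $6/n$ and $12/n^2$ plus the geometric terms, versus $12/n$ and $(27+12b)/n^2$), all of which the stated loose constants absorb; the paper's nested-Young version avoids introducing $d_j$ at the cost of a slightly cruder accounting. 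You also correctly flag the one step where a naive bound fails --- separating $\varphi$ from $\xi$ by the triangle inequality loses a factor of $n^2/b^2$ --- which is precisely the trap the paper's pairing of the two SAGA-type estimators is designed to avoid.
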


\begin{proof}
    To begin, we use the standard inequality $\|a - c\|^2 \le (1+\delta) \|a - b\|^2 + (1+\delta^{-1}) \|b - c\|^2$ for any $\delta > 0$ twice. For simplicity, we set $\delta = \sqrt{3/2} - 1$ and use the fact that $1 + \frac{1}{\sqrt{3/2} - 1} \le 6$ for both applications of this inequality.
    \begin{align}
    \label{eq:vbound}
        & \mathbb{E} \| \widetilde{\nabla}^{\textnormal{\tiny SARGE}}_{k+1} - \widetilde{\nabla}^{\textnormal{\tiny SARGE}}_k\|^2 \notag \\
        = & \mathbb{E} \left\| \widetilde{\nabla}^{\textnormal{\tiny SAGA}}_{k+1} - \left(1-\frac{b}{n}\right) \left( \widetilde{\nabla}^{\xi\textnormal{\tiny-SAGA}}_{k+1} - \widetilde{\nabla}^{\textnormal{\tiny SARGE}}_k \right) - \widetilde{\nabla}^{\textnormal{\tiny SARGE}}_k \right\|^2 \\
        %%%%%%%%%%%%%%%%%%%%%%%%%%%%%%%%%%%%%%%%%%%%%%
        \le & 6 \mathbb{E} \left\| \widetilde{\nabla}^{\textnormal{\tiny SAGA}}_{k+1} - \widetilde{\nabla}^{\xi\textnormal{\tiny-SAGA}}_{k+1} \right\|^2 + \frac{\sqrt{3} b^2}{\sqrt{2} n^2} \mathbb{E} \left\|\widetilde{\nabla}^{\xi\textnormal{\tiny-SAGA}}_{k+1} - \widetilde{\nabla}^{\textnormal{\tiny SARGE}}_k \right\|^2 \\
        %%%%%%%%%%%%%%%%%%%%%%%%%%%%%%%%%%%%%%%%%%%%%%%%%%
        \le & 6 \mathbb{E} \left\| \widetilde{\nabla}^{\textnormal{\tiny SAGA}}_{k+1} - \widetilde{\nabla}^{\xi\textnormal{\tiny-SAGA}}_{k+1} \right\|^2 + \frac{6 \sqrt{3} b^2}{\sqrt{2} n^2} \mathbb{E} \left\|\widetilde{\nabla}^{\xi\textnormal{\tiny-SAGA}}_{k+1} - \nabla f(x_k) \right\|^2 + \frac{3 b^2}{2 n^2} \mathbb{E} \left\|\nabla f(x_k) - \widetilde{\nabla}^{\textnormal{\tiny SARGE}}_k \right\|^2 \\
        %%%%%%%%%%%%%%%%%%%%%%%%%%%%%%%%%%%%%%%%%%%%%%%%%
        \le & 6 \mathbb{E} \left\| \widetilde{\nabla}^{\textnormal{\tiny SAGA}}_{k+1} - \widetilde{\nabla}^{\xi\textnormal{\tiny-SAGA}}_{k+1} \right\|^2 + \frac{9 b^2}{n^2} \mathbb{E} \left\|\widetilde{\nabla}^{\xi\textnormal{\tiny-SAGA}}_{k+1} - \nabla f(x_k) \right\|^2 + \frac{3 b^2}{2 n^2} \mathbb{E} \left\|\nabla f(x_k) - \widetilde{\nabla}^{\textnormal{\tiny SARGE}}_k \right\|^2.
    \end{align}
    We now bound the first two of these three terms separately. Consider the first term.
    \begin{align}
        & 6 \mathbb{E} \left\| \widetilde{\nabla}^{\textnormal{\tiny SAGA}}_{k+1} - \widetilde{\nabla}^{\xi\textnormal{\tiny-SAGA}}_{k+1} \right\|^2 \\
        %%%%%%%%%%%%%%%%%%%%%%%%%%%%%%%%%%%%%%%%%%%%%%%%%%%%%%%%%%%%%%
        = & 6 \mathbb{E} \Bigg\| \frac{1}{b} \left( \sum_{j \in J_k} \nabla f_j(x_{k+1}) - \nabla f_j(\varphi_k^j) \right) + \frac{1}{n} \sum_{i=1}^n \nabla f_i(\varphi_k^i) \notag \\
        & - \frac{1}{b} \left( \sum_{j \in J_{k-1}} \nabla f_j(x_k) - \nabla f_j(\xi_k^j) \right) - \frac{1}{n} \sum_{i=1}^n \nabla f_i(\xi_k^i) \Bigg\|^2 \\
        %%%%%%%%%%%%%%%%%%%%%%%%%%%%%%%%%%%%%%%%%%%%%%%%%%%%%%%%%%%%%%%
        \le & 12 \mathbb{E} \Bigg\| \frac{1}{b} \left( \sum_{j \in J_k} \nabla f_j(x_{k+1}) - \nabla f_j(x_k) \right) \Bigg\|^2 \notag \\
        & + 12 \mathbb{E} \Bigg\| \frac{1}{b} \left(\sum_{j \in J_k} \nabla f_j(\varphi_k^j) - \nabla f_j(\xi_k^j) \right) - \frac{1}{n} \sum_{i=1}^n \nabla f_i(\varphi_k^i) + \frac{1}{n} \sum_{i=1}^n \nabla f_i(\xi_k^i) \Bigg\|^2 \\
        %%%%%%%%%%%%%%%%%%%%%%%%%%%%%%%%%%%%%%%%%%%%%%%%%%%%
        \symnum{1}{=} & 12 \mathbb{E} \Bigg\| \frac{1}{b} \left( \sum_{j \in J_k} \nabla f_j(x_{k+1}) - \nabla f_j(x_k) \right) \Bigg\|^2 + 12 \mathbb{E} \left\| \frac{1}{b} \left( \sum_{j \in J_k} \nabla f_j(\varphi_k^j) - \nabla f_j(\xi_k^j) \right) \right\|^2 \notag \\
        & - 12 \left\| \frac{1}{n} \sum_{i=1}^n \nabla f_i(\varphi_k^i) + \frac{1}{n} \sum_{i=1}^n \nabla f_i(\xi_k^i) \right\|^2 \\
        %%%%%%%%%%%%%%%%%%%%%%%%%%%%%%%%%%%%%%%%%%%%%%%%%%
        \le & \frac{12}{n} \sum_{i=1}^n \mathbb{E} \left\| \nabla f_i(x_{k+1}) - \nabla f_i(x_k) \right\|^2 + 12 \mathbb{E} \left\| \frac{1}{b} \left( \sum_{j \in J_k} \nabla f_j(\varphi_k^j) - \nabla f_j (\xi_k^j) \right) \right\|^2 \\ 
        %%%%%%%%%%%%%%%%%%%%%%%%%%%%%%%%%%%%%%%%%%%%%%%%%%
        \le & \frac{12}{n} \sum_{i=1}^n \mathbb{E} \left\| \nabla f_i(x_{k+1}) - \nabla f_i(x_k) \right\|^2 + \frac{12}{b} \mathbb{E} \sum_{j \in J_k} \left\| \nabla f_j(\varphi_k^j) - \nabla f_j (\xi_k^j) \right\|^2.
    \end{align}
    Equality \numcirc{1} is the standard variance decomposition, which states that for any random variable $X$, $\E \| X - \E X \|^2 = \E \|X\|^2 - \|\E X\|^2$. The second term can be reduced further by computing the expectation. Let $j_k$ be any element of $J_k$. The probability that $\nabla f_{j_k} (\varphi_k^{j_k}) = \nabla f_{j_{k-1}}(x_k)$ is equal to the probability that $j_k \in J_{k-1}$, which is $b / n$. The probability that $\nabla f_{j_k}(\varphi_k^{j_k}) = \nabla f_{j_{k-2}} (x_{k-1})$ is equal to the probability that $j_k \not \in J_{k-1}$ and $j_k \in J_{k-2}$, which is $b / n \left( 1 - b / n \right)$. Continuing in this way,
    \begin{align}
        & \mathbb{E} \left\|\nabla f_{j_k}(\varphi_k^{j_k}) - \nabla f_{j_k}(\xi_k^{j_k}) \right\|^2 = \frac{b}{n} \sum_{\ell = 1}^k \left(1 - \frac{b}{n}\right)^{k - \ell} \mathbb{E} \| \nabla f_{j_{\ell-1}}(x_{\ell}) - \nabla f_{j_{\ell-1}}(x_{\ell - 1})\|^2.
    \end{align}
    This implies that
    \begin{align}
        \frac{12}{b} \mathbb{E} \sum_{j \in J_k} \left\| \nabla f_j(\varphi_k^j) - \nabla f_j (\xi_k^j) \right\|^2 & \le \frac{12 b}{n^2} \sum_{\ell=1}^k \left(1-\frac{b}{n}\right)^{k-\ell} \sum_{i=1}^n \| \nabla f_i(x_{\ell}) - \nabla f_i(x_{\ell-1})\|^2 \\
        & \le \frac{12 b}{n^2} \sum_{\ell=1}^k \left(1-\frac{b}{2 n}\right)^{k-\ell} \sum_{i=1}^n \| \nabla f_i(x_{\ell}) - \nabla f_i(x_{\ell-1})\|^2.
    \end{align}
    We include the inequality of the second line to simplify later arguments. This completes our bound for the first term of \eqref{eq:vbound}. For the second term, we recall Lemma \ref{lem:sagaforsarge}.
    \begin{equation}
        \mathbb{E} \left\|\widetilde{\nabla}^{\xi\textnormal{\tiny-SAGA}}_{k+1} - \nabla f(x_k) \right\|^2 \le \frac{3}{b^2} \sum_{\ell = 1}^k \left(1 - \frac{b}{2 n} \right)^{k - \ell} \sum_{i=1}^n \mathbb{E} \|\nabla f_i(x_{\ell}) - \nabla f_i(x_{\ell-1}) \|^2.
    \end{equation}
    Combining all of these bounds, we have shown
    \begin{align}
        & \mathbb{E} \| \widetilde{\nabla}^{\textnormal{\tiny SARGE}}_{k+1} - \widetilde{\nabla}^{\textnormal{\tiny SARGE}}_k\|^2 \notag \\
        %%%%%%%%%%%%%%%%%%%%%%%%%%%%%%%%%%%%%%%%%%%%%
        \le & \frac{12}{n} \sum_{i=1}^n \mathbb{E} \|\nabla f_i(x_{k+1}) - \nabla f_i(x_k) \|^2 + \frac{27 + 12 b}{n^2} \sum_{\ell=1}^k \left( 1 - \frac{b}{2 n} \right)^{k - \ell} \sum_{i=1}^n \|\nabla f_i(x_\ell) - \nabla f_i(x_{\ell - 1}) \|^2 \notag \\
        & + \frac{3 b^2}{2 n^2} \left\|\nabla f(x_k) - \widetilde{\nabla}^{\textnormal{\tiny SARGE}}_k \right\|^2.
    \end{align}
    
\end{proof}
Lemma \ref{lem:vbound} allows us to take advantage of the recursive structure of our gradient estimate. With this lemma established, we can prove a bound on the MSE.

\begin{lemma}
\label{lem:sargerecurse}
    The SARGE gradient estimator satisfies the following recursive bound:
\begin{align}
    & \mathbb{E} \|\widetilde{\nabla}^{\textnormal{\tiny SARGE}}_{k+1} - \nabla f(x_{k+1})\|^2 \notag \\
    \le & \left( 1 - \frac{b}{n} + \frac{3 b^2}{2 n^2} \right) \mathbb{E} \left\| \widetilde{\nabla}^{\textnormal{\tiny SARGE}}_k - \nabla f(x_k) \right\|^2 + \frac{12}{n} \sum_{i=1}^n \mathbb{E} \|\nabla f_i(x_{k+1}) - \nabla f_i(x_k) \|^2 \\
    & + \frac{27 + 12 b}{n^2} \sum_{\ell=1}^k \left( 1 - \frac{b}{2 n} \right)^{k - \ell} \sum_{i=1}^n \mathbb{E} \|\nabla f_i(x_{\ell}) - \nabla f_i(x_{\ell - 1}) \|^2. 
\end{align}
\end{lemma}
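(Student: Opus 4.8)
The plan is to mimic the telescoping-error argument already used for SARAH in the proof of Lemma~\ref{lem:sarahmseb}, but adapted to the \emph{biased} recursion of SARGE. Write $e_k \defeq \widetilde{\nabla}^{\textnormal{\tiny SARGE}}_k - \nabla f(x_k)$ for the error whose squared norm we must control, and $d_{k+1} \defeq \widetilde{\nabla}^{\textnormal{\tiny SARGE}}_{k+1} - \widetilde{\nabla}^{\textnormal{\tiny SARGE}}_k$ for the one-step increment, which Lemma~\ref{lem:vbound} already bounds. Since $\widetilde{\nabla}^{\textnormal{\tiny SARGE}}_{k+1} = \widetilde{\nabla}^{\textnormal{\tiny SARGE}}_k + d_{k+1}$, the error obeys the exact identity $e_{k+1} = e_k - \Delta_k + d_{k+1}$, where $\Delta_k \defeq \nabla f(x_{k+1}) - \nabla f(x_k)$. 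The whole proof is a controlled expansion of $\|e_{k+1}\|^2$ built on this decomposition.

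First I would establish the bias relation $\nabla f(x_{k+1}) - \E \widetilde{\nabla}^{\textnormal{\tiny SARGE}}_{k+1} = \big(1 - \tfrac{b}{n}\big)\big(\nabla f(x_k) - \widetilde{\nabla}^{\textnormal{\tiny SARGE}}_k\big)$. This follows by applying $\E$ to the SAGA representation of SARGE and using that $\widetilde{\nabla}^{\textnormal{\tiny SAGA}}_{k+1}$ and $\widetilde{\nabla}^{\xi\textnormal{\tiny-SAGA}}_{k+1}$ are unbiased for $\nabla f(x_{k+1})$ and $\nabla f(x_k)$ respectively (with $\widetilde{\nabla}^{\textnormal{\tiny SARGE}}_k$ constant under $\E$). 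Rearranging yields the single most important quantity in the argument, $\E d_{k+1} = \Delta_k - \tfrac{b}{n} e_k$. Next I would square the decomposition and apply the conditional expectation $\E$, noting that $e_k$ and $\Delta_k$ are measurable with respect to the first $k$ iterates (because $x_{k+1}$ is computed before the minibatch $J_k$ is drawn) and may be pulled out of $\E$. Substituting $\E d_{k+1}=\Delta_k - \tfrac{b}{n}e_k$ into the three cross terms, the two inner products $\pm 2\langle e_k, \Delta_k\rangle$ cancel identically, leaving
\[
\E\|e_{k+1}\|^2 = \Big(1 - \tfrac{2b}{n}\Big)\|e_k\|^2 - \|\Delta_k\|^2 + \E\|d_{k+1}\|^2 + \tfrac{2b}{n}\langle \Delta_k, e_k\rangle .
\]

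To eliminate the surviving cross term I would invoke Young's inequality in the balanced form $\tfrac{2b}{n}\langle \Delta_k, e_k\rangle \le \tfrac{b}{n}\|\Delta_k\|^2 + \tfrac{b}{n}\|e_k\|^2$. This raises the coefficient of $\|e_k\|^2$ from $1 - \tfrac{2b}{n}$ to $1 - \tfrac{b}{n}$, and turns the coefficient of $\|\Delta_k\|^2$ into $-1 + \tfrac{b}{n} \le 0$, so the $\|\Delta_k\|^2$ term can simply be dropped. Taking full expectations and inserting the bound of Lemma~\ref{lem:vbound} for $\mathbb{E}\|d_{k+1}\|^2$ then contributes its $\tfrac{3b^2}{2n^2}\mathbb{E}\|\nabla f(x_k) - \widetilde{\nabla}^{\textnormal{\tiny SARGE}}_k\|^2 = \tfrac{3b^2}{2n^2}\mathbb{E}\|e_k\|^2$ summand, which merges with $1 - \tfrac{b}{n}$ to give exactly the factor $1 - \tfrac{b}{n} + \tfrac{3b^2}{2n^2}$, while the remaining two summands of Lemma~\ref{lem:vbound} reproduce verbatim the two sums over $\|\nabla f_i(x_{\ell}) - \nabla f_i(x_{\ell-1})\|^2$ in the claim.

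The main obstacle is arithmetic rather than conceptual: making the coefficient of $\|e_k\|^2$ land \emph{exactly} at $1 - \tfrac{b}{n} + \tfrac{3b^2}{2n^2}$ requires the $\E d_{k+1}$ computation, the cancellation of the $\langle e_k, \Delta_k\rangle$ terms, and the Young weight all to be handled precisely; the balanced choice (weight $1$) is what reproduces the stated constant and simultaneously renders the $\|\Delta_k\|^2$ coefficient non-positive. The other point demanding care is the measurability bookkeeping — in particular, that $\Delta_k$ behaves as a constant under $\E$ so that only $d_{k+1}$ carries randomness. Once these are in place, the result is just the assembly of the identity above with the previously proved increment bound of Lemma~\ref{lem:vbound}.
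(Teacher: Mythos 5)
Your proposal is correct and follows essentially the same route as the paper's proof: the same decomposition $e_{k+1} = e_k - \Delta_k + d_{k+1}$, the same computation of $\E d_{k+1}$ via the bias relation, the same balanced Young's inequality yielding the coefficient $1 - \tfrac{b}{n}$ while making the $\|\Delta_k\|^2$ coefficient non-positive, and the same final invocation of Lemma \ref{lem:vbound}. The only difference is cosmetic — you substitute $\E d_{k+1}$ directly into the cross terms and let the $\pm 2\langle e_k,\Delta_k\rangle$ terms cancel, whereas the paper expands each inner product via polarization identities; the resulting intermediate identity is the same.
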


\begin{proof}
 
The beginning of our proof is similar to the proof of the variance bound for the SARAH gradient estimator in \cite[Lem. 2]{sarah}.
\begin{align}
    & \E \|\widetilde{\nabla}^{\textnormal{\tiny SARGE}}_{k+1} - \nabla f(x_{k+1})\|^2 \\
    = & \E \left\| \widetilde{\nabla}^{\textnormal{\tiny SARGE}}_k - \nabla f(x_k) + \nabla f(x_k) - \nabla f(x_{k+1}) + \widetilde{\nabla}^{\textnormal{\tiny SARGE}}_{k+1} - \widetilde{\nabla}^{\textnormal{\tiny SARGE}}_k \right\|^2 \\
    = & \left\| \widetilde{\nabla}^{\textnormal{\tiny SARGE}}_k - \nabla f(x_k) \right\|^2 + \left\|\nabla f(x_k) - \nabla f(x_{k+1}) \right\|^2 + \E \left\| \widetilde{\nabla}^{\textnormal{\tiny SARGE}}_{k+1} - \widetilde{\nabla}^{\textnormal{\tiny SARGE}}_k \right\|^2 \notag \\
    & + 2 \langle \nabla f(x_k ) - \widetilde{\nabla}^{\textnormal{\tiny SARGE}}_k, \nabla f(x_{k+1}) - \nabla f(x_k) \rangle \\
    & \quad - 2 \left\langle \nabla f(x_k ) - \widetilde{\nabla}^{\textnormal{\tiny SARGE}}_k, \E \left[ \widetilde{\nabla}^{\textnormal{\tiny SARGE}}_{k+1} - \widetilde{\nabla}^{\textnormal{\tiny SARGE}}_k \right] \right\rangle \\
    & \quad \quad - 2 \left\langle \nabla f(x_{k+1}) - \nabla f(x_k), \E \left[ \widetilde{\nabla}^{\textnormal{\tiny SARGE}}_{k+1} - \widetilde{\nabla}^{\textnormal{\tiny SARGE}}_k \right] \right\rangle.
\end{align}
We consider each inner product separately. The first inner product is equal to
\begin{align}
    2 \langle \nabla f(x_k ) - \widetilde{\nabla}^{\textnormal{\tiny SARGE}}_k, \nabla f(x_{k+1}) - \nabla f(x_k) \rangle = & - \|\nabla f(x_k ) - \widetilde{\nabla}^{\textnormal{\tiny SARGE}}_k\|^2 - \|\nabla f(x_{k+1}) - \nabla f(x_k)\|^2 \notag \\
    & + \|\nabla f(x_{k+1}) - \widetilde{\nabla}^{\textnormal{\tiny SARGE}}_k\|^2.
\end{align}
For the next two inner products, we use the fact that
\begin{align}
    & \E \left[ \widetilde{\nabla}^{\textnormal{\tiny SARGE}}_{k+1} - \widetilde{\nabla}^{\textnormal{\tiny SARGE}}_k \right] \\
    = & \E \Bigg[ \widetilde{\nabla}^{\textnormal{\tiny SAGA}}_{k+1} - \left(1 - \frac{b}{n} \right) \widetilde{\nabla}^{\xi\textnormal{\tiny-SAGA}}_{k+1} + \left(1 - \frac{b}{n} \right) \widetilde{\nabla}^{\textnormal{\tiny SARGE}}_k \Bigg] - \widetilde{\nabla}^{\textnormal{\tiny SARGE}}_k \\
    = & \nabla f(x_{k+1}) - \left( 1 - \frac{b}{n} \right) \nabla f(x_k) - \frac{b}{n} \widetilde{\nabla}^{\textnormal{\tiny SARGE}}_k \\
    = & \nabla f(x_{k+1}) - \nabla f(x_k) + \frac{b}{n} \left( \nabla f(x_k) - \widetilde{\nabla}^{\textnormal{\tiny SARGE}}_k \right).
\end{align}
With this equality established, we see that the second inner product is equal to
\begin{align}
    & - 2 \left\langle \nabla f(x_k) - \widetilde{\nabla}^{\textnormal{\tiny SARGE}}_k, \E \left[ \widetilde{\nabla}^{\textnormal{\tiny SARGE}}_{k+1} - \widetilde{\nabla}^{\textnormal{\tiny SARGE}}_k \right] \right\rangle \\
    = & - 2 \langle \nabla f(x_k) - \widetilde{\nabla}^{\textnormal{\tiny SARGE}}_k, \nabla f(x_{k+1}) - \nabla f(x_k) \rangle - \frac{2 b}{n} \langle \nabla f(x_k ) - \widetilde{\nabla}^{\textnormal{\tiny SARGE}}_k, \nabla f(x_k) - \widetilde{\nabla}^{\textnormal{\tiny SARGE}}_k \rangle \\
    =& \| \nabla f(x_k ) - \widetilde{\nabla}^{\textnormal{\tiny SARGE}}_k \|^2 + \| \nabla f(x_{k+1}) - \nabla f(x_k) \|^2 - \|\nabla f(x_{k+1}) - \widetilde{\nabla}^{\textnormal{\tiny SARGE}}_k \|^2 - \frac{2 b}{n} \| \nabla f(x_k ) - \widetilde{\nabla}^{\textnormal{\tiny SARGE}}_k \|^2 \\
    =& \left(1 - \frac{2 b}{n} \right) \| \nabla f(x_k ) - \widetilde{\nabla}^{\textnormal{\tiny SARGE}}_k \|^2 + \| \nabla f(x_{k+1}) - \nabla f(x_k) \|^2 - \|\nabla f(x_{k+1}) - \widetilde{\nabla}^{\textnormal{\tiny SARGE}}_k \|^2.
\end{align}
The third inner product can be bounded using a similar procedure.
\begin{align}
    & - 2 \left\langle \nabla f(x_{k+1}) - \nabla f(x_k), \E \left[ \widetilde{\nabla}^{\textnormal{\tiny SARGE}}_{k+1} - \widetilde{\nabla}^{\textnormal{\tiny SARGE}}_k \right] \right\rangle \\
    = & - 2 \langle \nabla f(x_{k+1}) - \nabla f(x_k), \nabla f(x_{k+1}) - \nabla f(x_k) \rangle - \frac{2 b}{n} \langle \nabla f(x_{k+1} ) - \nabla f(x_k), \nabla f(x_k) - \widetilde{\nabla}^{\textnormal{\tiny SARGE}}_k \rangle \\
    \le & - 2 \| \nabla f(x_{k+1}) - \nabla f(x_k) \|^2 + \frac{b}{n} \|\nabla f(x_{k+1} ) - \nabla f(x_k)\|^2 + \frac{b}{n} \|\nabla f(x_k) - \widetilde{\nabla}^{\textnormal{\tiny SARGE}}_k \|^2 \\
    = & - \left(2 - \frac{b}{n}\right) \| \nabla f(x_{k+1}) - \nabla f(x_k) \|^2 + \frac{1}{n} \|\nabla f(x_k) - \widetilde{\nabla}^{\textnormal{\tiny SARGE}}_k \|^2,
\end{align}
where the inequality is Young's. Altogether and after applying the full expectation operator, we have
\begin{align}
    & \mathbb{E} \|\widetilde{\nabla}^{\textnormal{\tiny SARGE}}_{k+1} - \nabla f(x_{k+1})\|^2 \notag \\
    \le & \left( 1 - \frac{b}{n} \right) \mathbb{E} \left\| \widetilde{\nabla}^{\textnormal{\tiny SARGE}}_k - \nabla f(x_k) \right\|^2 - \left( 1 - \frac{b}{n} \right) \mathbb{E} \|\nabla f(x_{k+1}) - \nabla f(x_k)\|^2 + \mathbb{E} \| \widetilde{\nabla}^{\textnormal{\tiny SARGE}}_{k+1} - \widetilde{\nabla}^{\textnormal{\tiny SARGE}}_k\|^2 \\
    \le & \left( 1 - \frac{b}{n} \right) \mathbb{E} \left\| \widetilde{\nabla}^{\textnormal{\tiny SARGE}}_k - \nabla f(x_k) \right\|^2 + \mathbb{E} \| \widetilde{\nabla}^{\textnormal{\tiny SARGE}}_{k+1} - \widetilde{\nabla}^{\textnormal{\tiny SARGE}}_k\|^2.
\end{align}
Finally, we bound the last term on the right using Lemma \ref{lem:vbound}.
\begin{align}
    & \mathbb{E} \|\widetilde{\nabla}^{\textnormal{\tiny SARGE}}_{k+1} - \nabla f(x_{k+1})\|^2 \notag \\
    \le & \left( 1 - \frac{b}{n} + \frac{3 b^2}{2 n^2} \right) \mathbb{E} \left\| \widetilde{\nabla}^{\textnormal{\tiny SARGE}}_k - \nabla f(x_k) \right\|^2 + \frac{12}{n} \sum_{i=1}^n \mathbb{E} \|\nabla f_i(x_{k+1}) - \nabla f_i(x_k) \|^2 \notag \\
    & \quad \quad + \frac{27 + 12 b}{n^2} \sum_{\ell=1}^k \left( 1 - \frac{b}{2 n} \right)^{k - \ell} \sum_{i=1}^n \mathbb{E} \|\nabla f_i(x_{\ell}) - \nabla f_i(x_{\ell - 1}) \|^2. 
\end{align}
\end{proof}

Lemma \ref{lem:sargerecurse} shows that the SARGE gradient estimator satisfies the MSEB property with suitably chosen parameters.

\begin{corollary}
\label{cor:sargemseb}
    The SARGE gradient estimator with $b \le n / 3$ satisfies the MSEB property with $M_1 = 12$, $M_2 = (27 + 12 b) / n^2$, $\rho_M = \frac{b}{2 n}$, $\rho_B = b / n$, and $\rho_F = \frac{b}{2 n}$.
\end{corollary}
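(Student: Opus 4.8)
The plan is to read off the MSEB constants directly from the two recursive estimates already established in Lemmas~\ref{lem:vbound} and~\ref{lem:sargerecurse}, which together carry all of the analytic burden; the corollary then amounts to matching the recursion of Lemma~\ref{lem:sargerecurse} against the template in the definition of the MSEB property and verifying that the claimed contraction rates are admissible. I would set $\mathcal{M}_k \defeq \mathbb{E}\|\widetilde{\nabla}^{\textnormal{\tiny SARGE}}_{k+1} - \nabla f(x_{k+1})\|^2$ and take $\mathcal{F}_k$ to be the trailing geometric-sum term on the right-hand side of Lemma~\ref{lem:sargerecurse}.

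First I would pin down the bias, which fixes $\rho_B$. The computation of $\E[\widetilde{\nabla}^{\textnormal{\tiny SARGE}}_{k+1} - \widetilde{\nabla}^{\textnormal{\tiny SARGE}}_k]$ carried out inside the proof of Lemma~\ref{lem:sargerecurse} gives
\[
\E \widetilde{\nabla}^{\textnormal{\tiny SARGE}}_{k+1} = \widetilde{\nabla}^{\textnormal{\tiny SARGE}}_k + \nabla f(x_{k+1}) - \nabla f(x_k) + \tfrac{b}{n}\left(\nabla f(x_k) - \widetilde{\nabla}^{\textnormal{\tiny SARGE}}_k\right).
\]
Rearranging yields $\nabla f(x_{k+1}) - \E\widetilde{\nabla}^{\textnormal{\tiny SARGE}}_{k+1} = (1 - \tfrac{b}{n})(\nabla f(x_k) - \widetilde{\nabla}^{\textnormal{\tiny SARGE}}_k)$, so the bias contracts with rate $\rho_B = b/n$, verifying the first requirement of the MSEB property.

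Next I would match the MSE recursion term by term. Lemma~\ref{lem:sargerecurse} already has exactly the shape required by \eqref{eq:mseb}: the summand $\tfrac{12}{n}\sum_{i}\mathbb{E}\|\nabla f_i(x_{k+1}) - \nabla f_i(x_k)\|^2$ identifies $M_1 = 12$, while the coefficient $1 - \tfrac{b}{n} + \tfrac{3b^2}{2n^2}$ multiplying $\mathcal{M}_{k-1}$ must be dominated by $1 - \rho_M$. Choosing $\rho_M = b/(2n)$, this reduces to $\tfrac{3b^2}{2n^2} \le \tfrac{b}{2n}$, i.e.\ $b \le n/3$ --- precisely the hypothesis of the corollary, which is exactly where that constraint enters. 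For the last term I would re-index the geometric sum by $\ell \mapsto \ell - 1$ to convert $\|\nabla f_i(x_\ell) - \nabla f_i(x_{\ell-1})\|^2$ into the $\|\nabla f_i(x_{\ell+1}) - \nabla f_i(x_\ell)\|^2$ form demanded by the definition; since $b \le n/3$ bounds $(1 - \tfrac{b}{2n})^{-1}$ by an absolute constant, this shift costs only a constant factor, yielding $\rho_F = b/(2n)$ together with the stated $M_2$.

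Because the two preceding lemmas have already absorbed the genuinely hard estimates (the variance decomposition for $\widetilde{\nabla}^{\textnormal{\tiny SARGE}}_{k+1} - \widetilde{\nabla}^{\textnormal{\tiny SARGE}}_k$ and the telescoping of the $\varphi_k^j$ versus $\xi_k^j$ difference), the only delicate point in this final step is the bookkeeping around $\mathcal{F}_k$: one must confirm that both the index shift and the extension of the summation range up to $\ell = k$ are harmless --- the former because $b \le n/3$ controls $(1-\tfrac{b}{2n})^{-1}$, the latter because the extra summand is nonnegative --- and that the resulting $\rho = \min\{\rho_M,\rho_B,\rho_F\} = b/(2n)$ is simultaneously consistent with all three contraction estimates. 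I expect this matching to be the main (though modest) obstacle, as it is purely a matter of aligning summation indices and decay exponents rather than any new inequality.
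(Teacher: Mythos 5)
Your proof is correct and follows essentially the same route as the paper's: the same choices of $\mathcal{M}_k$ and $\mathcal{F}_k$, the same bias computation giving $\rho_B = b/n$, and the same use of $b \le n/3$ to absorb the $\frac{3b^2}{2n^2}$ term into the contraction factor $1-\frac{b}{2n}$. You are in fact slightly more explicit than the paper about the $\ell \mapsto \ell - 1$ reindexing of the geometric sum defining $\mathcal{F}_k$ and the constant-factor cost it incurs, a step the paper dispatches with ``the choices are clear.''
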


\begin{proof}
    It is easy to see that $\rho_B = b / n$ by computing the expectation of the SARGE gradient estimator.
    \begin{align}
        \nabla f(x_{k+1}) - \E \widetilde{\nabla}^{\textnormal{\tiny SARGE}}_{k+1} & = \nabla f(x_{k+1}) - \E \left[ \widetilde{\nabla}^{\textnormal{\tiny SAGA}}_{k+1} - \left(1 - \frac{b}{n}\right) \left( \widetilde{\nabla}^{\xi\textnormal{\tiny-SAGA}}_{k+1} - \widetilde{\nabla}^{\textnormal{\tiny SARGE}}_k \right) \right] \\
        & = \left(1-\frac{b}{n}\right) \left( \nabla f(x_k) - \widetilde{\nabla}^{\textnormal{\tiny SARGE}}_k \right).
    \end{align}
    The result of Lemma \ref{lem:sargerecurse} makes it clear that $M_1 = 12$. To determine $\rho_M$, we must first choose a suitable sequence $\mathcal{M}_k$. Let $\mathcal{M}_k = \mathbb{E} \|\widetilde{\nabla}^{\textnormal{\tiny SARGE}}_{k+1} - \nabla f(x_{k+1})\|^2$. The requirement that $b \le n / 3$ implies $1 - \frac{b}{n} + \frac{3 b^2}{2 n^2} \le 1 - \frac{b}{2 n}$, so Lemma \ref{lem:sargerecurse} ensures that with $\rho_M =  \frac{b}{2 n}$, $\mathcal{M}_k \le (1-\rho_M) \mathcal{M}_{k-1}$.
    
    Finally, we must compute $M_2$ and $\rho_F$ with respect to some sequence $\mathcal{F}_k$. Lemma \ref{lem:sargerecurse} motivates the choice
    \begin{equation}
        \mathcal{F}_k = \sum_{\ell=1}^k \left( 1 - \frac{b}{2 n} \right)^{k - \ell} \sum_{i=1}^n \mathbb{E} \|\nabla f_i(x_{\ell}) - \nabla f_i(x_{\ell - 1}) \|^2,
    \end{equation}
    and the choices $M_2 = \frac{27 + 12 b}{n^2}$ and $\rho_F = \frac{b}{2 n}$ are clear.
\end{proof}

To prove the convergence rates of Theorem \ref{thm:sarge}, we simply combine the MSEB constants of Corollary \ref{cor:sargemseb} with Theorems \ref{thm:main1} and \ref{thm:main2}.

\end{appendix}

\end{document}